\titleformat{\section}[hang]{\normalfont\scshape\centering}{\thesection.}{1em}{}
\titleformat{\subsection}[hang]{\normalfont}{\thesubsection.}{1em}{}
\theoremstyle{definition}
\newtheorem{thm}{Theorem}[section]
\newtheorem{thm2}{Theorem}
\newtheorem{defi}{Definition}[section]
\newtheorem{prop}{Proposition}[section]
\newtheorem{remark}{Remark}[section]
\newtheorem{lem}{Lemma}[section]
\newcommand{\R}{\mathbb{R}} 
\newcommand{\Z}{\mathbb{Z}} 
\newcommand{\N}{\mathbb{N}} 
\newcommand{\conj}[2]{\left\{#1\colon#2\right\}} 
\newcommand{\fun}[3]{#1\colon#2\rightarrow#3} 
\newcommand{\abs}[1]{\left|#1\right|} 
\newcommand{\norm}[2]{\left\|#1\right\|_{#2}} 
\renewcommand{\i}{\texttt{i}} 
\renewcommand{\d}{\textup{d}} 
\renewcommand{\div}{\textup{div}\,} 
\newcommand{\curl}[1]{\textup{curl}\,(#1)} 
\renewcommand{\epsilon}{\varepsilon} 
\newcommand{\pib}[1]{{#1}^{\leq1}} 
\newcommand{\pob}[1]{{#1}^{>1}} 
\title{\textsf{On the topology of the magnetic lines of large solutions to the Magnetohydrodynamic equations in $\R^3$}}
\author{\textsc{R. Lucà}\footnote{ \small \texttt{renato.luca@univ-orleans.fr} -  \textsc{Institut Denis Poisson, UMR 7013, CNRS}, 
		Université d’Orléans, Bâtiment de Mathématiques, rue de
		Chartres, F-45100 Orléans.} and \textsc{C. Peña}\footnote{ \small \texttt{cpena@bcamath.org} - \textsc{Basque Center for Applied Mathematics (BCAM)}, Alameda de Mazarredo 14, 48009, Bilbao -- \textsc{Universidad del País Vasco / Euskal Herriko Unibertsitatea (UPV/EHU)}, Barrio Sarriena s/n, 48940 Leioa.}}
\date{}
\begin{document}

\maketitle

\numberwithin{equation}{section}

\noindent
\textsc{Abstract.} The purpose of this article is twofold: first, we introduce a new class of global strong solutions to the magnetohydrodynamic system in $\R^3$ with initial data $(u_0,b_0)$ of arbitrarily large size in any critical space (to say, $\norm{u_0}{\dot{B}_{\infty,\infty}^{-1}} \simeq M \simeq \norm{b_0}{\dot{B}_{\infty,\infty}^{-1}}$ with $M\gg1$). To do so, we impose a smallness condition on the difference $u_0-b_0$. 
Secondly, we use this result to prove the existence of global solutions to the system, starting from a suitable class of large (in critical spaces) initial data, which present magnetic reconnection. With this, we mean a change of topology of the integral lines of the magnetic field under the time evolution. The proof of this fact relies on comparing the number of hyperbolic critical points of the magnetic field at different times.

\medskip
\noindent
\textbf{Keywords:} resistive and viscous incompressible MHD equations; global existence of large solutions; magnetic reconnection; Beltrami fields.

\section{Introduction}
The Cauchy initial value problem for the Magnetohydrodynamic equations is
\begin{equation}\label{eq:MHD}\tag{MHD}
    \left\lbrace\begin{aligned}
        &\partial_tu +(u\cdot\nabla)u +\nabla p =\nu\Delta u + (b\cdot\nabla)b\,, \\
        &\partial_tb + (u\cdot\nabla)b  = (b\cdot\nabla)u + \eta\Delta b\,, \\
        &\div u = \div b = 0\,, \\
        &u(0,\cdot) = u_0\,, \; b(0,\cdot) = b_0\,,
    \end{aligned}\right.
\end{equation}
where $(t,x)\in[0,T]\times\R^3$ for some $T\in(0,\infty]$, $\nu>0$ is the (constant) viscosity of the fluid, $\eta>0$ is the (constant) resistivity, and we have as unknowns: the velocity field $u\colon[0,T]\times\R^3\rightarrow\mathbb{R}^3$, the magnetic field $b\colon[0,T]\times\R^3\rightarrow\mathbb{R}^3$ and the pressure $p\colon[0,T]\times\R^3\rightarrow\mathbb{R}$. The system \eqref{eq:MHD} describes the time evolution of the velocity and the magnetic fields of charged viscous incompressible fluids without external forces, such as highly conducting plasmas. 

\subsection{Global existence of solutions}

The \eqref{eq:MHD} model is concerned with the evolution of a fluid in the presence of a magnetic field, and it is used in the study of plasma physics, geophysics or astrophysics among other fields. A well-explained physical background for this system can be found in \cite{Da01}. From the mathematical point of view, local existence (and global existence for small data) of strong solutions in the Sobolev scale is known since the pioneering work of Duraut and Lions \cite{DL72, ST83}. 
For more recent developments we refer to \cite{CDL14, FMRR14, JZ16} and the references therein. Whether the local solutions may develop a singularity or not in finite time is still an outstanding open problem (note that when $b = 0$ the MHD system reduces to the Navier-Stokes one, so 
the difficulties of the latter also appear in the former).

\medskip

In this paper we are interested in obtaining global strong solutions for \textit{large initial data}. In this setting, critical spaces play an important role:
\begin{defi}\label{def:crit.spaces}
	Let $(u,p)$ be a solution to \eqref{eq:MHD} with initial data $(u_0,b_0)$ and let $\lambda>0$. Then, the functions
	\begin{equation}\label{eq:rescaling_MHD}
		u^{\lambda}(t,x) = \lambda u(\lambda^2t, \lambda x), \quad b^{\lambda}(t,x) = \lambda b(\lambda^2t, \lambda x), \quad p^{\lambda}(t,x) = \lambda^2 p(\lambda^2t, \lambda x)
	\end{equation}
	solve \eqref{eq:MHD} with initial data
	\begin{equation*}
		u_0^{\lambda}(x) = \lambda u_0(\lambda x), \quad b_0^{\lambda}(x) = \lambda b_0(\lambda x), \quad p_0^{\lambda}(x) = \lambda^2 p(\lambda x).
	\end{equation*}
	A Banach space is critial for the initial data $(u_0,b_0)$ if the relative norm is invariant under the scaling \eqref{eq:rescaling_MHD}.
\end{defi}
\begin{remark}
	When the domain is $\R^3$, the solution to \eqref{eq:MHD} share the same scaling properties as the ones to Navier-Stokes (putting $b=0$).
\end{remark}
Some examples of critical spaces for \eqref{eq:MHD} in $\R^3$ are
\begin{equation*}
	\dot{H}^{1/2} \hookrightarrow L^3 \hookrightarrow \dot{B}_{p,\infty}^{-1+3/p} \hookrightarrow BMO^{-1} \hookrightarrow \dot{B}_{\infty,\infty}^{-1},
\end{equation*}
with $\dot{B}_{\infty,\infty}^{-1}$ being the largest translation invariant critical space for \eqref{eq:MHD} (see Definition \ref{def:Besov.spaces}). Global in time existence for \textit{small initial data} has been stablished in these spaces (by fixed-point arguments in the Duhamel formulation of the problem) up to $BMO^{-1}$ in \cite{MYZ07}, as well as ill-posedness in $\dot{B}_{\infty,\infty}^{-1}$ (see \cite{CD20, DQS11}). On the other hand, the large data problem is still widely open and has atractted the attention of many researchers.

\medskip

In this direction, some efforts have been focused on exploiting the interactions between the velocity and the magnetic fields. 
This idea has its origins in the pioneering work of Alfvén \cite{Al42} and has been further developed by other authors \cite{BSS88, HXY18} (and references therein).
In particular, in \cite{HHW14} the authors used the Els\"asser change of variables in order to prove global well-posedness for a suitable class of initial data. Due to the symmetric structure of the system \eqref{eq:MHD}, this change of variables has played an important role in its study, since it relates the original equations with a coupling of transport equations (see \cite{Sm88, CF21, CF23} and references therein). Our first result follows the same philosophy, although we rather study the differences of the field solutions and the corresponding linear propagators, taking advantage of suitable cancellations between the magnetic and the velocity fields in order to introduce a new class of (strong) global large solutions. Moreover, we do this at the level of Sobolev scales including the critical space $L^2(\R_+;L^{\infty}(\R^3))$ for the system. More precisely, we prove the following global existence and uniqueness result:
\begin{thm2}[\textbf{Global strong solutions}]\label{fdhsidhfb}
	Let $u_0,b_0\in H^r(\R^3)$, for some $r\geq3$, be such that
	\begin{equation*}
		\norm{u_0-b_0}{H^r} \lesssim\rho \quad\text{and}\quad \norm{e^{\eta t\Delta}u_0}{L_t^2W_x^{k,\infty}} + \norm{e^{\eta t\Delta}b_0}{L_t^2W_x^{k,\infty}}  \lesssim 1+N^k, \quad \forall 0\leq k\leq r,
	\end{equation*}
	for some constant $N\geq1$ and some (small) constant $0<\rho\leq cN^{-r-1}$ with $c>0$ sufficiently small. Then, there exists a unique global regular solution $(u,b)$ to \eqref{eq:MHD} with $\nu = \eta$ that satisfies
	$$\norm{u(t)-e^{\eta t\Delta}u_0}{H^k} + \norm{b(t)-e^{\eta t\Delta}b_0}{H^k} \leq C\rho N^k, \quad\forall 0\leq k\leq r, \quad\forall t\geq0.$$
\end{thm2}
\begin{remark}\label{rem:resistivity_equal_viscosity}
	We have focused on the case in which the viscosity and the resistivity are equal. In our proofs it is crucial to use some cancellations of large terms that appear thanks to the symmetries of the system (see for instance \eqref{eq:rewrite.lin.terms.level0v}, \eqref{eq:rewrite.lin.terms.level0h}, \eqref{eq:lin.term.levelk}). In order to maintain them, we must set the viscosity equal to the resistivity, $\nu=\eta$; a physical justifications of this choice can be found in \cite[Remark 2.4]{HHW14}. For a deeper understanding of the physical setting we suggest looking into \cite{JV18, La20} and references therein. A perturbative analysis may be implemented in order to extend the result to the case in which $|\nu - \eta|$ is sufficiently small, but it is left as an open problem to extend this result to the case where $\nu\neq\eta$.
\end{remark}

As mentioned before, the proof of this result is based on a perturbative analysis of the solution with respect to its 
linear propagator, that is, the evolution of the initial data under the heat semigroup. We obtain global existence of 
regular solutions via energy estimates for the perturbation $(u- e^{t\Delta}u_0,b-e^{t\Delta}b_0)$. Moreover, as in \cite{CCL25}, we are able to quantify the error in such a way that it depends only polynomially on the $L^2((0,T);W^{r,\infty})$ norm of the linear evolution $(e^{t\Delta}u_0,e^{t\Delta}b_0)$, while no higher order derivatives appear in the exponential dependence, which is only on the $L^2((0,T);L^\infty)$ norm. This will be crucial in order to implement this result in the context of the second problem that we study in this paper, that is to give examples of magnetic reconnection for large solutions (or, more precisely, for solutions evolving from large data).

\medskip

A choice of initial data which satisfies the assumption of our theorem is the following:
$$
u_0:=M \text{curl}(\phi B_N),  \quad b_0:=M\curl{\phi B_N}+\rho \curl{w}
$$
where $M>0$, $\phi, w \in H^{r+1}(\R^3)$ (with this we mean that each component of $w$ belongs to 
$H^{r+1}(\R^3)$) and $B_N$ is a Beltrami field of frequency $N > 1$, namely a solution of 
$$ \curl{B_N} = N B_N.$$
One can show (see \cite{CL24}) that $\| u_0 \|_{\dot{B}^{-1}_{\infty, \infty}} \simeq \| b_0 \|_{\dot{B}^{-1}_{\infty, \infty}} \simeq M$,
thus the initial data are large in any critical space (if we choose $M$ large). Moreover, it is easy to check that this class of 
initial data does not fit the smallness assumption of the well-posedness result in \cite{HHW14}. Again, our interest in this class of initial data 
is motivated by the fact that we use it to give examples of magnetic reconnection (for large solutions).

\subsection{Magnetic reconnection}
Our second goal is to study topology of the \textit{magnetic lines}. We define a 
\textit{magnetic line} as an integral line of $b$, namely a curve $\gamma$ on $\R^3$ which solves the ODE
$$\Dot{\gamma}(s)=b(t,\gamma(s)).$$ 
Now we can give the definition of the problem:
\begin{defi}
    A solution $(u,b)$ to \eqref{eq:MHD} shows \textit{magnetic reconnection} if there exist two times $t_1\neq t_2$ such that there is no homeomorphism of $\R^3$ mapping the set of integral lines of $b(t_1,\cdot)$ into the set of integral lines of $b(t_2,\cdot)$.
\end{defi}
\begin{remark}
    The analogous of this problem for the Navier-Stokes equations is called \textit{vortex reconnection}, where instead of the integral lines of the magnetic field, one studies those of the vorticity.
\end{remark}

An observation which goes back to Alfvén \cite{Al42} (\textit{frozen-in-flux theorem}) is that in the non-resistive case ($\eta=0$), the magnetic lines evolve with the flow (associated to the vector field $u$) for sufficiently regular solutions. In particular, there is no change in their topology, so magnetic reconnection is forbidden. On the other hand, if $\eta >0$, one expects that reconnection of magnetic lines must happen.
This concept is important from a physical point of view, since it is related to phenomena such as solar flares, coronal mass ejections and other astrophysical events which involve the transformation of the magnetic energy into other types of energy.

\medskip

Although there are experimental evidences of this (\cite{GPWXH22}, \cite{T09}), a mathematical framework for the phenomenon of reconnection had not been constructed yet. On the other hand, we are at least able to provide examples of solutions for which the topology of the magnetic lines (or of the vortex lines, in the case of the Navier--Stokes equation) changes over the evolution. The first analytic example of reconnection was given in \cite{ELP17} for Navier-Stokes. The authors considered a class of periodic initial data giving rise to a solution that changes the topology of its vortex lines on $\mathbb{T}^3$. Following the same spirit, \cite{CCL25,Ci24} proved magnetic reconnection on the torus for \eqref{eq:MHD}. To prove the (vortex/magnetic) reconnection in the torus, the contractibility of the integral lines of the field is used as 
a topological constraint that is broken in the non ideal case. The construction of the initial data in these results also relies on some deep topological properties of steady solutions to the Euler equations proved in \cite{EP12,EP15}. Finally, in \cite{CL24} the authors proved the phenomenon of vortex reconnection for the Navier-Stokes equations on the whole space $\R^3$ for small initial data. In this case, they used as a topological constraint the number of critical points of the vorticity (note that regular curves on $\R^3$ are always contractible). Recall that a hyperbolic critical point of a vector field $v$ is a point $p\in\R^3$ such that $v(p)=0$ and the Jacobian matrix of $v$ at $p$ has no eigenvalues with zero real part.

\medskip

Considering the state of the art of the problem, the next step is to build analytic examples giving rise to magnetic reconnection for \eqref{eq:MHD} defined on the full space $\R^3$. Interestingly we will be able to exhibit a class of {\it large} solutions for which the topology of the 
magnetic lines changes under the evolution (even for time which are arbitrarily small). In relation to this, it is worth to point out that there are still no results about vortex reconnection for large solutions of the Navier--Stokes equations.   
Thus here  we build an example of initial data (inspired on the one in \cite{CL24}) but of arbitrary size in any critical space, and we prove the existence of a (unique) global strong solution starting from this example, that actually shows magnetic reconnection: 
\begin{thm2}[\textbf{Magnetic reconnection}]
	Given any constants $M,\,T>0$, there exist two smooth (finite energy) divergence free vector fields $u_0,\,b_0$ in $\R^3$ such that $\norm{u_0}{\Dot{B}_{\infty,\infty}^{-1}} \simeq \norm{b_0}{\Dot{B}_{\infty,\infty}^{-1}} \simeq M$ and \eqref{eq:MHD} admits a (unique) global strong solution $(u,b)$ with initial datum $(u_0,b_0)$, such that the magnetic lines at time $t=0$ and $t=T$ are not topologically equivalent, meaning that there is no homeomorphism of $\R^3$ into itself mapping the magnetic lines of $b(0,\cdot)$ into those of $b(T,\cdot)$.
\end{thm2}

\begin{remark}
With finite energy we mean that $u_0, b_0 \in L^{2}(\R^3)$. Thus, by energy dissipation, the same is true for 
$u(t), b(t)$ at any $t >0$. 
\end{remark}

We give a sketch of the main ideas to prove reconnection. The aim is to show a change in the topology of the magnetic lines over the evolution. To do so, we use as a topological constraint the number of hyperbolic critical points of the field, which is invariant under homeomorphisms: we count them at different times to show that it actually changes. Thus, magnetic reconnection must have happened in the meanwhile. This "counting zeros" strategy has already been used in other works concerning the topology of the solutions to \eqref{eq:MHD} (see \cite{CCL25,CL24,Ci24}) 
as well as in the context of the Navier--Stokes equations (see \cite{EP25,LS12a,LS12b}).

\medskip

To build an analytic example of initial data in $\R^3$ giving rise to a solution presenting magnetic reconnection, we cannot follow the strategy of \cite{CCL25} in $\mathbb{T}^3$, where they consider initial data of the form
$$MB_0+\delta B_1$$
for some Beltrami fields $B_0, B_1$. Indeed, in $\R^3$ this solutions would have infinite energy. To overcome this difficulty, we follow the spirit of \cite{CL24}: we localize the Beltrami field with a bump function and put the curl operator in front to preserve the incompressibility condition. In the case of the Navier-Stokes equations, in order to control the evolution of such an initial datum it is necessary to impose a suitable smallness condition on its size (for instance in a sufficiently regular Sobolev scale). The novelty of this paper is that, in the context of the MHD equations, we will rather impose a smallness condition on the difference $u_0 - b_0$ between velocity and magnetic field, but both $u_0$ and $b_0$ may be 
arbitrarily large. This is the key idea that allows us to prove magnetic reconnection for large solutions.  
Moreover, in order to control the evolution we must use crucially the fact that the initial velocity and magnetic fields are very close. It is worth mentioning that this is related to (or at least reminiscent of) the Els\"asser change of variables.

\medskip

To be more precise about the objects of our proof and the strategy that we follow, we consider the (\textit{large} when $M$ is large) initial data
$$
u_0:=M \text{curl}(\phi B_N),  \quad b_0:=u_0+\rho e^{-\eta T\Delta}\curl{\psi W}
$$
and we choose the parameters in such a way that:
\begin{itemize}
	\item $M>0$ will be the size of the initial data, $T>0$ will be the time of reconnection, $\rho>0$ is a small parameter proportional to the size of $u_0-b_0$ in suitable norms and $N>0$ is a large parameter that will be used to 
	bump the magnetic fiels during the evolution,
	\item $B_N$ will be a high frequency Beltrami field in $L^\infty$ with eigenvalue $N$ (in fact an ABC flow),
	\item $W$  will be another $L^\infty$-field with null points,
	\item $\phi$ and $\psi$ will be functions that decrease polynomially and exponentially at infinity, respectively,
	\item $b_0$ will have no critical points, while $\curl{\psi W}$ will have $(0,0,0)$ as a hyperbolic critical point, and these properties are robustly stable under $C^1$-perturbations and time perturbations,
	\item we will prove that the solution $b(T)$ will be close to $\curl{\psi W}$ in the $C^1$-norm, so the magnetic reconnection must have happened between $t=0$ and $t=T$. To do so we will crucially use Theorem \ref{fdhsidhfb}. 
\end{itemize}
\begin{remark}
	This reconnection scenario is topologically robust, meaning that small $C^1$ perturbations of the initial data will still give a solution presenting this phenomenon. This is a consequence of the implicit function theorem, and it will be clear by the end of the proof. Similarly, the absence of critical points persists for sufficiently small times and the presence of a hyperbolic critical point persist for times sufficiently close to $T$ (again this second fact follows by the implicit function theorem).	
\end{remark}

We want to point out that we are not able to use this strategy in order to prove vortex reconnection for large solutions of the Navier-Stokes equation. One could be indeed tempted to use localization of Beltrami fields in order to do so, however this lead to some issue that we are still not able to solve. In order to explain that, we recall the well-posedness result for localization of Beltrami fields from \cite{CL24} (that relies upon the nonlinear smallness condition introduced in \cite{CG09}).
 
\begin{thm}[\cite{CL24}, Theorem 2] \label{fdjskduhgfdsjhdf}
    Let $M>0$ and $q\in(3,\infty)$. Let $B_\lambda\in L^q\cap L^\infty(\R^3)$ be a Beltrami field of frequency $\lambda\neq0$ such that $\norm{B_\lambda}{L^\infty}=1$. Let $\phi\in W^{s,1}\cap W^{s,\infty}(\R^3)$ be a positive function. We consider the divergence free vector field
    $$u_0:=M\curl{\phi_LB_\lambda}, \quad \phi_L(\cdot):=\phi(\cdot/L), \quad M,L>0.$$
    Then, for all sufficiently large values of the regularity $s$ and of the dilation parameter $L$, the following holds:
    \begin{itemize}
        \item every scaling invariant norm of $u_0$ is large if $M$ is large. In fact, $\norm{u_0}{\Dot{B}_{\infty,\infty}^{-1}}\simeq M$;
        \item there exists a unique strong solution $u$ to the Navier-Stokes equations with initial datum $u_0$
        (the solution is smooth and has finite energy).
    \end{itemize}
\end{thm}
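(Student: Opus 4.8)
The plan is to treat the two assertions separately, the norm estimate being essentially algebraic and the well-posedness being the substantial point. For the first bullet I would expand the curl as
$$u_0 = M\curl{\phi_L B_\lambda} = M\bigl(\nabla\phi_L\times B_\lambda + \phi_L\,\curl{B_\lambda}\bigr) = M\bigl(\nabla\phi_L\times B_\lambda + \lambda\,\phi_L B_\lambda\bigr),$$
so that the leading term is $M\lambda\,\phi_L B_\lambda$ while $M\nabla\phi_L\times B_\lambda = O(M/L)$, since $\norm{\nabla\phi_L}{L^\infty}=L^{-1}\norm{\nabla\phi}{L^\infty}$. Because $B_\lambda$ is an eigenfunction of $-\Delta$ with eigenvalue $\lambda^2$ (indeed $-\Delta B_\lambda = \curl{\curl{B_\lambda}} = \lambda^2 B_\lambda$, as $\div B_\lambda = 0$), its spectrum concentrates at $\abs{\xi}\simeq\lambda$; the slowly varying envelope $\phi_L$ has spectrum of width $O(1/L)$ about the origin, so $u_0$ is frequency localized near $\abs{\xi}\simeq\lambda$. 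I would then use the characterization $\norm{f}{\dot{B}^{-1}_{\infty,\infty}}\simeq\sup_j 2^{-j}\norm{\Delta_j f}{L^\infty}$. The upper bound $\lesssim M$ is immediate from linearity in $M$ together with the finiteness of $\norm{\curl{\phi_L B_\lambda}}{\dot{B}^{-1}_{\infty,\infty}}$ (no low- or high-frequency content). For the lower bound I would pick $j$ with $2^j\simeq\lambda$, for which $\Delta_j u_0$ reproduces the leading term up to $O(M/L)$; positivity of $\phi$ and nontriviality of $B_\lambda$ guarantee a genuine pointwise lower bound $\norm{\phi_L B_\lambda}{L^\infty}\geq c>0$ (uniform in $L$, as $\phi_L\to\phi(0)>0$ locally), whence $2^{-j}\norm{\Delta_j u_0}{L^\infty}\gtrsim M$ for $L$ large. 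This yields $\norm{u_0}{\dot{B}^{-1}_{\infty,\infty}}\simeq M$.

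For the second bullet I first record that $u_0$ has finite energy and is smooth: interpolating $\phi\in W^{s,1}\cap W^{s,\infty}$ gives $\phi_L,\nabla\phi_L\in L^2$, and with $B_\lambda\in L^\infty$ the identity above yields $u_0\in L^2$; smoothness follows from smoothness of the eigenfunction $B_\lambda$ together with $s$ large. The core is global well-posedness. I would work with the perturbation $v:=u-e^{t\Delta}u_0$, solving the Duhamel equation $v = -\mathcal{B}(e^{\cdot\Delta}u_0,e^{\cdot\Delta}u_0) - 2\mathcal{B}(e^{\cdot\Delta}u_0,v) - \mathcal{B}(v,v)$, where $\mathcal{B}(f,g)(t)=\int_0^t e^{(t-s)\Delta}\mathbb{P}\,\div(f\otimes g)(s)\,ds$ and $\mathbb{P}$ is the Leray projector. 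The decisive point is a Beltrami cancellation in the source. Since $e^{s\Delta}(\phi_L B_\lambda)=e^{-\lambda^2 s}\phi_L B_\lambda + O(1/L)$, the error being the commutator $[e^{s\Delta},\phi_L]$ (controlled because $\phi_L$ varies on scale $L$), the leading contribution to the source is $(M\lambda)^2 e^{-2\lambda^2 s}\,\mathbb{P}\,\div(\phi_L^2\,B_\lambda\otimes B_\lambda)$. Using $(B_\lambda\cdot\nabla)B_\lambda=\tfrac12\nabla\abs{B_\lambda}^2 - B_\lambda\times\curl{B_\lambda}=\tfrac12\nabla\abs{B_\lambda}^2$ (as $\curl{B_\lambda}=\lambda B_\lambda$ and $B_\lambda\times B_\lambda=0$), every term in which no derivative falls on $\phi_L$ is a pure gradient and is annihilated by $\mathbb{P}$; the surviving terms all carry one derivative of $\phi_L^2$ and are therefore $O(1/L)$. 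Combined with the time factor $\int_0^\infty e^{-2\lambda^2 s}\,\d s=(2\lambda^2)^{-1}$, I expect the source to obey $\norm{\mathcal{B}(e^{\cdot\Delta}u_0,e^{\cdot\Delta}u_0)}{X}\lesssim M^2/L$ in the relevant critical solution space $X$.

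Finally I would close the argument exactly as in Theorem \ref{fdhsidhfb} and \cite{CCL25}, via energy estimates for $v$ combined with Gr\"onwall's inequality. The main obstacle is that the linear term $2\mathcal{B}(e^{\cdot\Delta}u_0,v)$ is \emph{not} small: the natural norm $\norm{e^{\cdot\Delta}u_0}{L^2_tL^\infty_x}\simeq M$ is large, so a naive contraction fails. The resolution rests on the time integrability supplied by the decay $e^{-\lambda^2 s}$: this large-but-integrable factor enters the Gr\"onwall estimate only through an exponential $e^{CM^2}$ multiplying the small source, giving a bound $\norm{v}{X}\lesssim (M^2/L)\,e^{CM^2}$. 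Since $M$ is fixed, choosing $s$ and $L$ sufficiently large makes this as small as we wish, which both closes the fixed point (absorbing $\mathcal{B}(v,v)$) and produces a unique global solution; parabolic smoothing then upgrades it to a smooth solution, and the energy inequality preserves finite energy for all $t>0$. The step I expect to be most delicate is the rigorous verification that the commutator $[e^{s\Delta},\phi_L]$ and all envelope-derivative terms are genuinely $O(1/L)$ uniformly in $s$ in the chosen critical norm, because it is precisely this $1/L$ gain that must beat the factor $e^{CM^2}$.
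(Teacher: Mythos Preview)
This theorem is not proved in the present paper: it is quoted verbatim from \cite{CL24} (Theorem~2 there) and stated without proof. The paper only comments on the strategy of \cite{CL24} in the paragraph that follows, noting that the key mechanism is the identity $\mathbb{P}(B_\lambda\cdot\nabla)B_\lambda=0$ (Beltrami fields are stationary Euler solutions), which renders the nonlinear self-interaction of $e^{t\Delta}u_0$ small once $\phi_L$ is close to a constant, and that the argument in \cite{CL24} is built on the Chemin--Gallagher nonlinear smallness condition of \cite{CG09}.

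Your proposal identifies the right mechanism: the expansion $\curl{\phi_L B_\lambda}=\lambda\phi_L B_\lambda+\nabla\phi_L\times B_\lambda$, the frequency localization near $|\xi|\simeq\lambda$, the Beltrami cancellation $\mathbb{P}\div(B_\lambda\otimes B_\lambda)=0$, and the $O(1/L)$ gain whenever a derivative hits $\phi_L$ are exactly the ingredients the paper attributes to \cite{CL24}. Where you depart from the actual argument is in the closing step. You propose energy estimates and Gr\"onwall in the style of Theorem~\ref{fdhsidhfb}, whereas \cite{CL24} runs a fixed-point argument in a critical space via the Chemin--Gallagher criterion: smallness of $\mathcal{B}(e^{\cdot\Delta}u_0,e^{\cdot\Delta}u_0)$ in the appropriate critical norm already yields global well-posedness, with no Gr\"onwall step and no $e^{CM^2}$ factor to absorb. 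More importantly, you never explain where the hypothesis $B_\lambda\in L^q$ with $q\in(3,\infty)$ is used. The paper stresses that this extra integrability is \emph{crucial} for the \cite{CL24} method (and is exactly what rules out ABC flows); in your sketch the only integrability used is $u_0\in L^2$, which follows from $\phi_L\in L^2$ and $B_\lambda\in L^\infty$ alone. Without pinpointing where $L^q$ enters --- it is needed to control the commutator and envelope terms in the critical norm, not merely in $L^2$ --- the delicate step you flag at the end remains a genuine gap.
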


The difference between our result and the result in \cite{CL24} (Theorem 1) can be understood looking at the smallness condition that is imposed. They use the fact that, when the parameter $L$ is large, the function $\phi_L$ is close to one, that is, $u_0$ is almost the Beltrami field, and they exploit the fact that $B_\lambda$ is a stationary solution to Euler equations; in particular,
$$\mathbb{P}(B_\lambda\cdot\nabla)B_\lambda=0, \quad\mathbb{P}\text{ being the Leray projection.}$$
But in order to take advantage of this fact, it is crucial for them to impose some extra integrability over the field. In our case, we avoid this condition using the interactions between the velocity and the magnetic fields. 
More precisely, we use the fact that, even if both the magnetic field and the velocity field are large, their difference is small at the initial time. This induces a crucial cancellation when we study the system associated to the difference between the (local in time) solution and the associated linear evolution. Taking advantage of this cancellation one can propagate the smallness assumption for larger time, achieving in this way global well-posedness. In conclusion, both methods are quite different and the smallness conditions are not comparable.

\medskip

We conclude this introduction with some comments about vortex reconnection. Let first notice that, despite Theorem \ref{fdjskduhgfdsjhdf} covers a large class of Beltrami fields, the necessity of $L^q$ integrability for some $q \neq \infty $ leaves out an important type of Beltrami fields: the ABC flows. Note that in this paper (as well as in \cite{CL24}) a specific example of ABC flow with no critical points is crucially used for showing reconnection. In order to use this method to prove vortex reconnection for large solutions of the Navier--Stokes equations on $\R^3$ one should be  able to solve one of the two (nontrivial) following problems:
\begin{itemize} 
\item  Extend Theorem \ref{fdjskduhgfdsjhdf} to $q = \infty$ (in such a way that also localizations of ABC flows are 
taken into account). 
\item Give an example of an $L^p(\R^3)$, $p \in (3 , \infty)$ Beltrami field that has no critical points. We remark that this is challenging since Beltrami fields on $\R^3$ typically have several critical points, as proved in \cite{EPR23}.
\end{itemize}

\section{Notations and preliminaries}
\noindent \textbf{Notations}. In the whole paper, the notation $A \lesssim B$ means that there exists a constant $C>0$, that only depends on some given parameters, such that $0 \leq A \leq C B$. We will write $A \simeq B$ when $A\lesssim B$ and $B\lesssim A$. Typically, the given parameters will be the regularity of the solution and the value of the resistivity. These quantities are fixed throughout the proofs and the choice of the other (large or small) quantities like the frequency scale $N$ and the size of the initial data $\delta$ depends (implicitly) on them. In order to stress out the dependence of the 
constant $C$ on a parameter $\theta$ we also use the notations
$A \lesssim_{\theta} B$ and $A \simeq_{\theta} B$. 

\subsection{Functional spaces}
\textbf{Sobolev spaces}. Denote by $\mathscr{S}'$ the space of tempered distributions (the topological dual space of Schwartz functions) and consider the usual notation for partial derivatives
$$\partial^\alpha=\frac{\partial^{\abs{\alpha}}}{\partial_{x_1}^{\alpha_1}\partial_{x_2}^{\alpha_2}\partial_{x_3}^{\alpha_3}}, \quad\text{for }\alpha=(\alpha_1,\alpha_2,\alpha_3)\in\N_0^3, \quad\abs{\alpha}=\alpha_1+\alpha_2+\alpha_3.$$
We will work in the setup of Sobolev spaces
$$\forall r\in\N,\quad W^{r,p}(\R^3):=\conj{f\in\mathscr{S}'(\R^3)}{\partial^\alpha f\in L^p, \text{ for all } \abs{\alpha}\leq r}$$
with the norm
$$\norm{f}{W^{r,p}}:=\left(\sum_{m=0}^r\norm{\nabla^mf}{L^p}^p\right)^{1/p}, \quad\text{where } \abs{\nabla^mf}^p:=\sum_{\abs{\alpha}=m}\abs{\partial^\alpha f}^p.$$
When $p=2$ they are a Hilbert space $H^r(\R^3):=W^{r,2}(\R^3)$, which also has a Fourier-side definition:
$$H^s(\R^3):=\conj{f\in\mathscr{S}'(\R^3)}{\norm{f}{H^s}<\infty}, \quad\text{with}\quad \norm{f}{H^s}^2:= \int_{\R^3}(1+\abs{\xi}^2)|\hat{f}(\xi)|^2\,\d\xi \quad \forall s\geq0.$$
Both definitions coincide when $s$ in a non negative integer. We shall denote with $\dot{H}^r$ the classical homogeneous Sobolev spaces
\begin{equation*}
    \dot{H}^r := \big\{ f\in\mathscr{S}' \colon \hat{f}\in L_{\text{loc}}^1(\R^3) \text{ and } \sum_{|\alpha|=r}|\partial^\alpha f|^2 < \infty \big\},
\end{equation*}
where $\hat{f}$ stands for the Fourier transform of $f$. For the sake of simplicity, we will denote $W^{r,p} \equiv W^{r,p}(\R^3)$ and $H^s \equiv H^s(\R^3)$.

\medskip

\noindent
\textbf{Besov Spaces and Littlewood-Paley decomposition}. We use the notations of \cite{CF23} (see also Chapter 2 of \cite{BCD11} for details). Fix a smooth radial function $\chi\in\mathscr{S}$ whose Fourier transform satisfies
\begin{equation*}
    \hat{\chi}(\xi) = \begin{cases}
        1 & \text{for } |\xi|\leq1, \\
        0 & \text{for } |\xi|>2
    \end{cases}
\end{equation*}
and such that $r\mapsto\chi(re)$ is nonincreasing over $\R_+$ for all unitary vectors $e\in\R^3$. Set $\varphi(\xi):=\chi(\xi)-\chi(2\xi)$ and $\varphi_j(\xi):=\varphi(2^{-j}\xi)$ for all $j\geq0$. The dyadic blocks $(\Delta_j)_{j\in\Z}$ are defined by\footnote{Throughout we agree that if $f$ is a measurable function on $\R^n$ with at most polynomial growth at infinity, then $f(D)$ denotes the pseudo-differential operator $u\mapsto \mathcal{F}^{-1}(f\hat{u})$.}:
\begin{equation*}
    \forall j\in\Z, \quad \dot{\Delta}_j:=\varphi_j(D).
\end{equation*}
The following property holds true:
\begin{equation*}
    {\rm Id} = \sum_{j\in\Z} \dot{\Delta}_j \text{ in } \mathscr{S}_h'(\R^n),
\end{equation*}
where
\begin{defi}[Definition 1.26 in \cite{BCD11}]
    We denote by $\mathscr{S}_h'(\R^3)$ the space of tempered distributions $f$ such that
    \begin{equation*}
        \lim_{\lambda\rightarrow\infty} \norm{\theta(\lambda D)f}{L^\infty}=0 \quad \text{for any }\, \theta\in\mathcal{D},
    \end{equation*}
    being $\mathcal{D}$ the space of test functions $C_c^{\infty}$.
\end{defi}
Now we can define the homogeneous Besov spaces:
\begin{defi}\label{def:Besov.spaces}
    Let $s\in\R$ and $(p,r)\in[1,\infty]^2$. The homogeneous Besov space $\dot{B}_{p,r}^s$ consists of those distributions $f\in\mathscr{S}_h'$ such that
    \begin{equation*}
        \norm{f}{\dot{B}_{p,r}^s} := \bigg( \sum_{j\in\Z} 2^{rjs}\norm{\dot{\Delta}_jf}{L^p}^r \bigg)^{1/r} <\infty \quad\text{if } r<\infty,
    \end{equation*}
    and
    \begin{equation*}
        \norm{f}{\dot{B}_{p,\infty}^s} := \sup_{j\in\Z} (2^{js}\norm{\dot{\Delta}_jf}{L^p}).
    \end{equation*}
\end{defi}
\begin{remark}\label{rmk:Besov.caloric}
    If $s<0$, one also has the equivalent caloric formulation
    \begin{equation*}
        \norm{f}{\dot{B}_{p,r}^{s}} \sim \norm{t^{-s/2} \norm{e^{t\Delta}f}{L^p}}{L^r\big(\R_+;\,\frac{\d t}{t}\big)},
    \end{equation*}
    where $e^{t\Delta}$ denotes the heat kernel.
\end{remark}

\subsection{Harmonic and differential inequalities}
We will need some analytic tools that can be found in any Harmonic Analysis and Partial Differential Equations book (see for example \cite{Bre10,BCD11}). We also give more specific references for the reader convenience.

\begin{lem}[Grönwall's inequality, Lemma A.24 in \cite{RRS16}]
    Let $u \colon [0,T] \rightarrow [0,\infty)$ be an absolutely continuous function that satisfies the differential inequality
    \begin{equation*}
        \forall t\in[0,T], \quad u'(t) \leq a(t)u(t) + b(t),
    \end{equation*}
    where $a$ and $b$ are non-negative integrable functions. Then,
    \begin{equation*}
        \forall t \in [0,T], \quad u(t) \leq e^{A(t)} \bigg( u(0) + \int_0^t e^{-A(s)}b(s)\,\d s \bigg),
    \end{equation*}
    where $A(t) := \int_0^t a(s)\,\d s$.
\end{lem}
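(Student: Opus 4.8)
The plan is to use the standard integrating-factor argument. Since $a$ is non-negative and integrable, the function $A(t) = \int_0^t a(s)\,\d s$ is absolutely continuous and non-decreasing on $[0,T]$, with $A(0)=0$ and $A'(t) = a(t)$ for almost every $t$. The key idea is to introduce the auxiliary function $v(t) := e^{-A(t)} u(t)$ and to show that its derivative is controlled purely by the inhomogeneous term $b$, which after integration produces the desired bound.

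First I would verify that $v$ is absolutely continuous: $u$ is absolutely continuous by hypothesis, the map $t \mapsto e^{-A(t)}$ is absolutely continuous (being a Lipschitz function composed with the absolutely continuous, bounded function $A \geq 0$), and the product of two absolutely continuous functions on a compact interval is again absolutely continuous. Hence $v$ is differentiable almost everywhere and recovers itself as the integral of its derivative.

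Next I would compute, for almost every $t$ at which both $u$ and $A$ are differentiable,
$$v'(t) = e^{-A(t)}\big( u'(t) - A'(t) u(t) \big) = e^{-A(t)}\big( u'(t) - a(t) u(t) \big) \leq e^{-A(t)} b(t),$$
where the final inequality uses the assumed differential inequality $u'(t) \leq a(t) u(t) + b(t)$ together with $e^{-A(t)} > 0$. Integrating this from $0$ to $t$ via the fundamental theorem of calculus for absolutely continuous functions, and using $v(0) = e^{-A(0)}u(0) = u(0)$, gives
$$e^{-A(t)} u(t) - u(0) = v(t) - v(0) \leq \int_0^t e^{-A(s)} b(s)\,\d s.$$
Multiplying through by $e^{A(t)} > 0$ then yields exactly the claimed bound.

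The only point requiring genuine care—and hence the main (mild) obstacle—is the measure-theoretic bookkeeping for absolutely continuous functions: one must confirm that the product and chain rules hold almost everywhere and that the fundamental theorem of calculus applies, rather than merely transcribing the classical $C^1$ computation. Once $v$ is known to be absolutely continuous, each of these steps is justified and no further estimates are needed, so the argument closes immediately.
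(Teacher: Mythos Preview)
Your proof is correct and follows the standard integrating-factor argument. The paper does not actually prove this lemma (it is cited from \cite{RRS16}), but the Remark immediately following it sketches essentially the same approach for a variant: multiply by $e^{-A(t)}$, recognise a derivative, and integrate; so your proposal aligns with what the paper presents.
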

\begin{remark}
    We will use another version of Grönwall's inequality, which can be restated avoiding the sign hypothesis over the coefficients and requiring less integrability over $u$, as follows: let $u\in C([0,T);\R)$ for $T\in(0,\infty)$ satisfy the differential inequality
    \begin{equation*}
        \forall t\in[0,T), \quad u'(t)\leq a(t)u(t)+b(t),
    \end{equation*}
    where $a,b\in L^1(0,T)$. Then,
    \begin{equation*}
        u(t) \leq u(0)e^{A(t)} + \int_0^t b(s)e^{A(t)-A(s)}\,\d s. 
    \end{equation*}
    For the proof, define
    \begin{equation*}
        v(t) := u(t)e^{-A(t)} - \int_0^t b(s)e^{-A(s)}\,\d s,
    \end{equation*}
    which is continuous by the Fundamental Theorem of Calculus. By using mollifiers, one can check that $v'$ is well-defined and $v'\leq0$ on $(0,T)$ so, without loss of generality, one can assume that $u$ is actually absolutely continuous. Now, multiply the differential inequality by the factor $e^{-A(t)}$, so that
    \begin{equation*}
        \forall t\in[0,T), \quad u'(t)e^{-A(t)}\leq a(t)u(t)e^{-A(t)}+b(t)e^{-A(t)} \quad\iff\quad (ue^{-A})'(t) \leq b(t)e^{-A(t)}\,.
    \end{equation*}
    Integrating this last expression and multiplying by $e^{A(t)}$, one obtains the desired result:
    \begin{equation*}
        u(t) \leq u(0)e^{A(t)} + \int_0^t b(s)e^{A(t)-A(s)}\,\d s.
    \end{equation*}
\end{remark}

\begin{prop}[Interpolation of $L^p(\R^n)$ spaces, Theorem 1.5 in \cite{RRS16}]
    Let $1\leq p_0<p_1\leq\infty$ and $u\in L^{p_0}\cap L^{p_1}$. Then, $u\in L^p$ for all $p_0\leq p\leq p_1$ and
    $$\norm{u}{L^{p_\theta}} \leq \norm{u}{L^{p_0}}^{\theta} \norm{u}{L^{p_1}}^{1-\theta}, \quad \text{where } \frac{1}{p_\theta} = \frac{\theta}{p_0} + \frac{1-\theta}{p_1}.$$
\end{prop}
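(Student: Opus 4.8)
The plan is to reduce the whole statement to a single application of Hölder's inequality, with a carefully chosen pair of conjugate exponents, and to treat the endpoint $p_1 = \infty$ by a short separate argument. First I would dispose of the boundary values of $\theta$: when $\theta = 1$ one has $p_\theta = p_0$ and when $\theta = 0$ one has $p_\theta = p_1$, so in both cases the claimed inequality is a trivial identity. Hence I may assume $0 < \theta < 1$, which forces $p_0 < p_\theta < p_1$.

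Assume first $p_1 < \infty$. The key step is to split the integrand as
\[
    \abs{u}^{p_\theta} = \abs{u}^{p_\theta\theta}\,\abs{u}^{p_\theta(1-\theta)}
\]
and to apply Hölder's inequality with the exponents
\[
    q := \frac{p_0}{p_\theta\theta}, \qquad q' := \frac{p_1}{p_\theta(1-\theta)}.
\]
The only thing to verify is that these are genuinely conjugate, and this is exactly where the defining relation for $p_\theta$ enters:
\[
    \frac{1}{q} + \frac{1}{q'} = p_\theta\left(\frac{\theta}{p_0} + \frac{1-\theta}{p_1}\right) = p_\theta\cdot\frac{1}{p_\theta} = 1.
\]
Applying Hölder then gives
\[
    \int_{\R^n}\abs{u}^{p_\theta}\,\d x \leq \left(\int_{\R^n}\abs{u}^{p_0}\,\d x\right)^{1/q}\left(\int_{\R^n}\abs{u}^{p_1}\,\d x\right)^{1/q'},
\]
since $p_\theta\theta q = p_0$ and $p_\theta(1-\theta)q' = p_1$ by construction. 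Because $p_0/q = p_\theta\theta$ and $p_1/q' = p_\theta(1-\theta)$, raising to the power $1/p_\theta$ yields precisely $\norm{u}{L^{p_\theta}} \leq \norm{u}{L^{p_0}}^{\theta}\norm{u}{L^{p_1}}^{1-\theta}$. In particular $u \in L^{p_\theta}$, since the right-hand side is finite by hypothesis.

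For the endpoint $p_1 = \infty$, Hölder does not apply directly, so I would argue separately. Here the defining relation reads $1/p_\theta = \theta/p_0$, i.e. $p_\theta = p_0/\theta$, and one estimates pointwise
\[
    \int_{\R^n}\abs{u}^{p_\theta}\,\d x = \int_{\R^n}\abs{u}^{p_\theta - p_0}\,\abs{u}^{p_0}\,\d x \leq \norm{u}{L^\infty}^{p_\theta - p_0}\int_{\R^n}\abs{u}^{p_0}\,\d x.
\]
Since $p_\theta - p_0 = p_\theta(1-\theta)$ and $p_0 = p_\theta\theta$, taking the $p_\theta$-th root once more gives the claim. I do not expect any genuine obstacle here; the only points requiring care are the exponent bookkeeping (checking that $q,q'$ are conjugate, which is forced by the definition of $p_\theta$) and remembering that the $L^\infty$ endpoint must be handled by the above pointwise bound rather than by Hölder.
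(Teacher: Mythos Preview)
Your argument is correct and is precisely the standard proof of this interpolation inequality via H\"older's inequality, with the $p_1=\infty$ endpoint handled separately by a pointwise bound. The paper itself gives no proof of this proposition at all; it simply states the result and cites \cite{RRS16} (Theorem 1.5) as a reference, so there is nothing to compare against beyond noting that your proof is the classical one.
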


\begin{prop}[Interpolation of $H^s(\R^n)$ spaces, Proposition 1.52 in \cite{BCD11}]
    If $s_0\leq s\leq s_1$, then we have
    $$\norm{u}{H^s} \leq \norm{u}{H^{s_0}}^{\theta} \norm{u}{H^{s_1}}^{1-\theta} \quad\text{with } s=\theta s_0 + (1-\theta)s_1.$$
\end{prop}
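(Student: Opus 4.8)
The plan is to prove this entirely on the Fourier side, using the Fourier characterization of the $H^s$ norm, namely $\norm{u}{H^s}^2 = \int_{\R^n}(1+\abs{\xi}^2)^s\abs{\hat u(\xi)}^2\,\d\xi$. The whole inequality reduces to a single application of H\"older's inequality once one observes how the weight factorizes. First I would dispose of the endpoints: if $\theta=0$ then $s=s_1$ and if $\theta=1$ then $s=s_0$, so in both cases the asserted inequality is in fact an equality. Hence I may assume $\theta\in(0,1)$, and in particular both $1/\theta$ and $1/(1-\theta)$ are finite exponents larger than $1$.

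The key algebraic observation is that the Bessel-potential weight factorizes multiplicatively in its exponent. Since $s=\theta s_0+(1-\theta)s_1$, for every $\xi\in\R^n$ we have the pointwise identity
\begin{equation*}
(1+\abs{\xi}^2)^s = \big[(1+\abs{\xi}^2)^{s_0}\big]^{\theta}\,\big[(1+\abs{\xi}^2)^{s_1}\big]^{1-\theta}.
\end{equation*}
Splitting also $\abs{\hat u(\xi)}^2 = \big(\abs{\hat u(\xi)}^2\big)^{\theta}\big(\abs{\hat u(\xi)}^2\big)^{1-\theta}$ and grouping the $\theta$ and $(1-\theta)$ factors, the integrand defining $\norm{u}{H^s}^2$ becomes a product of a $\theta$-th power of $(1+\abs{\xi}^2)^{s_0}\abs{\hat u(\xi)}^2$ and a $(1-\theta)$-th power of $(1+\abs{\xi}^2)^{s_1}\abs{\hat u(\xi)}^2$.

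The second step is to apply H\"older's inequality with the conjugate pair $p=1/\theta$ and $q=1/(1-\theta)$, which satisfy $1/p+1/q=\theta+(1-\theta)=1$. This yields
\begin{equation*}
\norm{u}{H^s}^2 \leq \left(\int_{\R^n}(1+\abs{\xi}^2)^{s_0}\abs{\hat u(\xi)}^2\,\d\xi\right)^{\theta}\left(\int_{\R^n}(1+\abs{\xi}^2)^{s_1}\abs{\hat u(\xi)}^2\,\d\xi\right)^{1-\theta} = \norm{u}{H^{s_0}}^{2\theta}\,\norm{u}{H^{s_1}}^{2(1-\theta)}.
\end{equation*}
Taking square roots gives exactly $\norm{u}{H^s}\leq \norm{u}{H^{s_0}}^{\theta}\norm{u}{H^{s_1}}^{1-\theta}$, as claimed.

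I do not expect any genuine obstacle here: the statement is a textbook convexity (log-convexity of $s\mapsto\norm{u}{H^s}^2$) fact and the only thing to be mildly careful about is the bookkeeping of exponents and the exclusion of the degenerate endpoints $\theta\in\{0,1\}$ before invoking H\"older. If one prefers to avoid the Fourier definition, an alternative is to phrase the same argument through the Littlewood--Paley characterization, bounding $2^{2js}\norm{\dot\Delta_j u}{L^2}^2$ block by block via the same factorization of $2^{2js}=(2^{2js_0})^{\theta}(2^{2js_1})^{1-\theta}$ and then applying H\"older in the summation over $j$; but the direct Fourier-integral version above is the shortest route and is the one I would write out.
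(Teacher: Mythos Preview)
Your argument is correct and is exactly the standard proof: factorize the Bessel weight via $s=\theta s_0+(1-\theta)s_1$ and apply H\"older with exponents $1/\theta$ and $1/(1-\theta)$. The paper does not give its own proof of this proposition; it simply quotes it from \cite{BCD11} (Proposition~1.52), so there is nothing to compare against beyond noting that your Fourier-side computation is precisely the classical one.
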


\begin{prop}[Gagliardo-Nirenberg inequality \cite{Ni59}]\label{thm:Gagliardo-Nirenberg}
Let $1\leq q,r \leq \infty$ and $j$ and $m$ integers with $0\leq j<m$, the following inequalities hold:
    \begin{equation}\label{eq:Gagliardo-Nirenberg_ineq}
    	\norm{\nabla^{j}u}{L^p(\R^3)} \leq C\norm{\nabla^{m}u}{L^r(\R^3)}^\theta \norm{u}{L^q(\R^3)}^{1-\theta},
    \end{equation}
    (the constant $C$ depending only on $m,\,j,\,q,\,r,\,\theta$), where
    \begin{equation*}
    	\frac{1}{p} = \frac{j}{3} + \theta\bigg(\frac{1}{r}-\frac{m}{3}\bigg) + \frac{1 - \theta}{q},
	\qquad j/m\leq\theta\leq1
    \end{equation*}
    with the following exceptions:
    \begin{enumerate}
    	\item If $j=0$, $rm<3$ and $q=\infty$, then we make an additional assumption that either $u$ tends to zero at infinity or $u\in L^{\tilde{q}}$ for some finite $\tilde{q}>0$.
    	\item If $r > 1$ and $m-j-3/r$ is a non negative integer, then \eqref{eq:Gagliardo-Nirenberg_ineq} holds only for $\theta$ satisfying $j/m\leq\theta<1$.
    \end{enumerate}
\end{prop}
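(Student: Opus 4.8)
The statement is the classical Gagliardo--Nirenberg inequality of \cite{Ni59}, so the plan is to reproduce the structure of the original argument rather than to look for something new. First I would record that the stated relation for $1/p$ is \emph{forced} by scaling: applying \eqref{eq:Gagliardo-Nirenberg_ineq} to the dilate $u(\lambda\,\cdot)$ and demanding that the powers of $\lambda$ on the two sides agree produces exactly the identity $\frac1p=\frac j3+\theta(\frac1r-\frac m3)+\frac{1-\theta}{q}$. This does not prove the inequality, but it isolates the correct homogeneous relation between the exponents and lets one normalize $u$ freely (for instance so that $\norm{\nabla^m u}{L^r}=\norm{u}{L^q}=1$) in the course of the argument.

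Next I would reduce the general case to a one-dimensional base case with a single intermediate derivative, namely
\begin{equation*}
\norm{f'}{L^p(\R)}\leq C\,\norm{f''}{L^r(\R)}^{1/2}\,\norm{f}{L^q(\R)}^{1/2},\qquad \frac1p=\frac12\Big(\frac1r+\frac1q\Big),
\end{equation*}
which is the instance $j=1$, $m=2$, $\theta=\tfrac12$ of \eqref{eq:Gagliardo-Nirenberg_ineq} in dimension one. The proof of this base case is a Landau--Kolmogorov balancing argument: for each $x$ and each $h>0$ one extracts $f'(x)$ from the Taylor identity $f(x+h)=f(x)+hf'(x)+\int_x^{x+h}(x+h-t)f''(t)\,\d t$, bounds the two resulting contributions by a local average of $\abs{f}$ (of size $\sim h^{-1}$) and a local average of $\abs{f''}$ (of size $\sim h$) via H\"older, and then optimizes in $h$. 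The optimal choice balances the two terms and yields the geometric mean, i.e.\ the $\tfrac12$ powers.

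From this base case I would build up in two independent directions. To pass from $\R$ to $\R^3$ I would apply the one-dimensional estimate in each coordinate direction, integrate the resulting inequalities over the transverse variables, and combine the partial estimates with the generalized H\"older inequality; a careful bookkeeping checks that the exponents assemble into exactly the three-dimensional relation for $1/p$. To reach arbitrary orders $0\leq j<m$ I would iterate a consecutive-order inequality $\norm{\nabla^j u}{L^{p_0}}\leq C\,\norm{\nabla^{j+1}u}{L^{p_1}}^{1/2}\norm{\nabla^{j-1}u}{L^{p_2}}^{1/2}$ and then fill in every admissible value $\theta\in[j/m,1]$ by interpolating in $L^p$ (using the $L^p$- and $H^s$-interpolation inequalities recalled above) between the two endpoints: $\theta=j/m$ is a pure Landau--Kolmogorov interpolation inequality, while $\theta=1$ is the Sobolev--Gagliardo--Nirenberg embedding $\norm{\nabla^j u}{L^p}\leq C\norm{\nabla^m u}{L^r}$ with $\frac1p=\frac1r-\frac{m-j}3$.

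The main obstacle is the analysis of the two listed exceptional regimes, both of which concern endpoints. When $j=0$, $rm<3$ and $q=\infty$ the lower-order control is only in $L^\infty$, and the inequality fails for functions that do not decay: for $u$ a nonzero constant the first factor on the right-hand side vanishes while $\norm{u}{L^p}$ is infinite, so the hypothesis that $u$ tends to zero at infinity (or lies in some $L^{\tilde q}$) is exactly what removes this obstruction. When $r>1$ and $m-j-3/r$ is a nonnegative integer one sits precisely at a critical Sobolev exponent, where $W^{m-j,r}(\R^3)$ fails to embed into the borderline target space (the well-known failure of $W^{3/r,r}\hookrightarrow L^\infty$), which forces one to drop $\theta=1$ and keep only $j/m\leq\theta<1$. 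Verifying that the H\"older-and-interpolation machinery reproduces the exponent relation exactly, and isolating these two endpoint obstructions, is where essentially all of the genuine work lies; the remaining steps are routine bookkeeping.
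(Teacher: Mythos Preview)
The paper does not supply a proof of this proposition: it is stated as a classical result with a citation to \cite{Ni59} and is used only as a black box (in the specific instance recorded in Remark~\ref{rmk:GN.examples}). Your sketch is a reasonable outline of Nirenberg's original argument, so there is nothing to compare against and no gap to report.
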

\begin{remark}\label{rmk:GN.examples}
	During the proof of Theorem \ref{thm:global_existence_large_data} we will use Proposition \ref{thm:Gagliardo-Nirenberg}  with:
	\begin{itemize}
        \item $j=0,\, m=1,\, r=2,\, q=2,\, p=3$ and $\theta=1/2$, so that
		\begin{equation*}
			\norm{u}{L^{3}} \leq C \norm{\nabla u}{L^2}^{1/2} \norm{u}{L^2}^{1/2}\,.
		\end{equation*}
       %
%
	\end{itemize}
\end{remark}

\begin{prop}[Sobolev embeddings for integer regularity in $\R^3$, Corollary 9.13 in \cite{Bre10}]
    Let $m\geq1$ an integer and $p\in[1,\infty)$. Then,
    \begin{align*}
        W^{m,p}(\R^3)\subset L^q(\R^3)&, \quad \text{where } \frac{1}{q}=\frac{1}{p}-\frac{m}{3},& \quad\text{if } \frac{1}{p}-\frac{m}{3}>0; \\
        W^{m,p}(\R^3)\subset L^q(\R^3)&, \quad \forall q\in[p,\infty),& \quad\text{if } \frac{1}{p}-\frac{m}{3}=0; \\
        W^{m,p}(\R^3)\subset L^\infty(\R^3)&,& \quad\text{if } \frac{1}{p}-\frac{m}{3}<0
    \end{align*}
    and all these injections are continuous. Moreover, if $m-3/p>0$ is not an integer, then
    $$W^{m,p}(\R^3)\subset C^k(\R^3),$$
    where $k:=[m-3/p]$.
\end{prop}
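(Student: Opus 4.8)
The plan is to reduce the whole statement to the first--order case $m=1$ and then to iterate, regime by regime. The cornerstone is the Gagliardo--Nirenberg--Sobolev inequality: for $1\leq p<3$ and every $u\in C_c^\infty(\R^3)$,
\begin{equation*}
\norm{u}{L^{p^*}}\leq C\,\norm{\nabla u}{L^p},\qquad \frac{1}{p^*}=\frac1p-\frac13,
\end{equation*}
which then extends to $W^{1,p}$ by density of $C_c^\infty$ (it is precisely the hypothesis $p<\infty$ that makes this density available). I would first establish the endpoint $p=1$, where $p^*=3/2$, by the classical product argument: writing each value $u(x)$ as an integral of a partial derivative along the corresponding coordinate line, one bounds $\abs{u(x)}^{3/2}$ by a product of three factors, each independent of one of the variables, and integrates the three variables successively by the generalized (Loomis--Whitney type) Hölder inequality. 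The range $1<p<3$ then follows by applying the $p=1$ estimate to $\abs{u}^{\gamma}$ with $\gamma=2p/(3-p)$ and using Hölder on $\nabla\abs{u}^\gamma=\gamma\abs{u}^{\gamma-1}\nabla u$, after which the exponent appearing on the left is exactly $p^*$.

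With the first--order inequality in hand, the three regimes follow from a chain of inclusions. In the subcritical case $\frac1p-\frac m3>0$ I would iterate the embedding $W^{k,p_{m-k}}\subset W^{k-1,p_{m-k+1}}$, where $\frac{1}{p_{j}}=\frac1p-\frac{j}{3}$, applying the first--order inequality to every derivative of order $\leq k-1$; after $m$ steps one lands in $L^q$ with $\frac1q=\frac1p-\frac m3$, and one checks that each intermediate exponent stays below $3$ exactly because the final deficit is positive. In the critical case $mp=3$ the same iteration runs $m-1$ times, bringing $u$ into $W^{1,3}$; the remaining step is the borderline embedding $W^{1,3}(\R^3)\hookrightarrow L^q$ for every finite $q\geq3$, which I would obtain by applying the $p=1$ inequality to powers $\abs{u}^t$, $t\geq1$, bootstrapping the resulting integrability upward and interpolating with $L^3$ to fill the whole range $[3,\infty)$. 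Finally, in the supercritical case $mp>3$ the essential new ingredient is Morrey's inequality $W^{1,p}(\R^3)\hookrightarrow C^{0,1-3/p}$ for $p>3$, proved through the pointwise oscillation bound $\abs{u(x)-u(y)}\lesssim\norm{\nabla u}{L^p}\abs{x-y}^{1-3/p}$ obtained by averaging the gradient over segments and applying Hölder; iterating the subcritical step until the running exponent first exceeds $3$ and then invoking Morrey on derivatives of the appropriate order yields $W^{m,p}\subset C^{k}$ with $k=[m-3/p]$ (in fact $C^{k,\sigma}$ with $\sigma=m-3/p-k$) when $m-3/p$ is not an integer, and in particular the $L^\infty$ bound.

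I expect the two genuine difficulties to be the endpoint bookkeeping at the critical exponent and Morrey's inequality. The critical case is delicate because the embedding into $L^\infty$ genuinely fails at $mp=3$, so one must settle for all finite $q$, and the power trick has to be combined with interpolation to absorb the lower--order term; keeping track of the correct intermediate exponents $p_j$ and verifying that they remain in the admissible range $[1,3)$ throughout the iteration is the main source of technical care. Morrey's part, by contrast, requires the separate segment--averaging argument and the identification of the Hölder exponent, with the simplifying feature that on $\R^3$ no boundary regularity or extension operator is needed. All the pointwise manipulations are justified first on $C_c^\infty(\R^3)$ and then transferred to $W^{m,p}$ by density, which is legitimate precisely because $p<\infty$.
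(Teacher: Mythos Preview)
The paper does not prove this proposition: it is stated as a preliminary result with a citation to Br\'ezis' textbook (Corollary 9.13 in \cite{Bre10}), so there is no proof in the paper to compare against. Your outline is the standard textbook argument---Gagliardo--Nirenberg--Sobolev for $m=1$ via the Loomis--Whitney product argument at $p=1$ followed by the power trick, then iteration along the chain of exponents for the subcritical regime, the borderline $W^{1,3}$ step for the critical regime, and Morrey's inequality for the supercritical regime---and it is essentially the proof given in the cited reference, so your approach is correct and aligned with the source the paper relies on.
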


\begin{prop}[Heat kernel estimates, Theorem D.4 in \cite{RRS16}]\label{prop:heat.dispersion}
    Let $e^{t\Delta}$ be the $n$-dimensional heat kernel (that acts as a convolution over tempered distributions). Then, for all $1\leq p\leq q\leq\infty$, $\alpha\in\N_0^n$ and $t>0$ it holds that
    \begin{align*}
    	\norm{e^{t\Delta}f}{L^q} &\leq \frac{C}{t^{\frac{n}{2}\left(\frac{1}{p}-\frac{1}{q}\right)}}\norm{f}{L^p}, \\
    	\norm{\partial^\alpha e^{t\Delta}f}{L^q} &\leq \frac{C}{t^{\frac{|\alpha|}{2}+\frac{n}{2}\left(\frac{1}{p}-\frac{1}{q}\right)}}\norm{f}{L^p}.
    \end{align*}
    Moreover, for any $r\in\N$ the following bound holds:
    $$\norm{e^{t\Delta}\nabla f}{H^r} \leq \frac{C}{\sqrt{t}}\norm{f}{H^r}.$$
\end{prop}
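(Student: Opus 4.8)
The plan is to work directly with the explicit Gaussian kernel $G_t(x) = (4\pi t)^{-n/2}e^{-\abs{x}^2/(4t)}$, for which $e^{t\Delta}f = G_t * f$, and to reduce the first two (dispersive) estimates to Young's convolution inequality combined with a scaling computation, while the third estimate is most transparent on the Fourier side.

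For the first bound I would fix the Young exponent $s\in[1,\infty]$ by $1 + 1/q = 1/s + 1/p$, i.e. $1/s = 1 - (1/p - 1/q)$; since $p\leq q$ this gives $s\geq1$, so Young's inequality applies and yields $\norm{e^{t\Delta}f}{L^q} = \norm{G_t * f}{L^q} \leq \norm{G_t}{L^s}\norm{f}{L^p}$. The remaining task is to compute $\norm{G_t}{L^s}$: writing $G_t(x) = t^{-n/2}G_1(x/\sqrt t)$ and substituting $x = \sqrt t\,y$ gives $\norm{G_t}{L^s} = t^{-n/2 + n/(2s)}\norm{G_1}{L^s} = C\,t^{-\frac{n}{2}(1 - 1/s)}$, and since $1 - 1/s = 1/p - 1/q$ this is exactly the claimed power of $t$. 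The constant $\norm{G_1}{L^s}$ is finite for every $s\in[1,\infty]$ because $G_1$ is Schwartz. For the derivative estimate I would proceed identically, using $\partial^\alpha e^{t\Delta}f = (\partial^\alpha G_t) * f$ and Young's inequality with the same $s$ to get $\norm{\partial^\alpha e^{t\Delta}f}{L^q} \leq \norm{\partial^\alpha G_t}{L^s}\norm{f}{L^p}$. The only new ingredient is the scaling identity $\partial^\alpha G_t(x) = t^{-(n+\abs{\alpha})/2}(\partial^\alpha G_1)(x/\sqrt t)$, which after the same change of variables gives $\norm{\partial^\alpha G_t}{L^s} = C\,t^{-\abs{\alpha}/2 - \frac{n}{2}(1 - 1/s)} = C\,t^{-\abs{\alpha}/2 - \frac{n}{2}(1/p - 1/q)}$; here $\partial^\alpha G_1$ is again Schwartz, so its $L^s$ norm is finite for all $s$.

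For the last inequality I would pass to the Fourier side, where $\widehat{e^{t\Delta}\nabla f}(\xi) = \i\,\xi\,e^{-t\abs{\xi}^2}\hat f(\xi)$, so that
\begin{equation*}
\norm{e^{t\Delta}\nabla f}{H^r}^2 = \int_{\R^n} (1+\abs{\xi}^2)^r\,\abs{\xi}^2 e^{-2t\abs{\xi}^2}\abs{\hat f(\xi)}^2\,\d\xi.
\end{equation*}
It then suffices to bound the Fourier multiplier $\abs{\xi}^2 e^{-2t\abs{\xi}^2}$ uniformly in $\xi$: setting $\sigma = t\abs{\xi}^2$ gives $\abs{\xi}^2 e^{-2t\abs{\xi}^2} = t^{-1}\sigma e^{-2\sigma} \leq t^{-1}\sup_{\sigma\geq0}\sigma e^{-2\sigma} = (2e\,t)^{-1}$, where the supremum is attained at $\sigma = 1/2$. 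Inserting this into the integral yields $\norm{e^{t\Delta}\nabla f}{H^r}^2 \leq (2e\,t)^{-1}\norm{f}{H^r}^2$, which is the claim with $C = (2e)^{-1/2}$.

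None of these steps presents a genuine obstacle, since the result is classical; the only point requiring care is the bookkeeping of the exponents, namely matching the Young exponent $s$ to the pair $(p,q)$ and verifying that the scaling computation of $\norm{\partial^\alpha G_t}{L^s}$ reproduces precisely the power $t^{-\abs{\alpha}/2 - \frac{n}{2}(1/p - 1/q)}$. I would present the $p=q$ and $p<q$ cases uniformly by simply allowing $s=1$ (in which case $\norm{G_t}{L^1}=1$ recovers the contraction property), so no separate endpoint discussion is needed.
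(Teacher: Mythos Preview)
Your argument is correct and is the standard classical proof: Young's inequality with the explicit scaling of the Gaussian kernel for the first two estimates, and the elementary bound $\sigma e^{-2\sigma}\leq (2e)^{-1}$ on the Fourier side for the $H^r$ estimate. The paper itself does not prove this proposition at all; it simply quotes it as Theorem~D.4 of \cite{RRS16}, so there is nothing to compare against beyond noting that your proof is exactly the one that reference would give.
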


Let $\abs{D}^k$ be the Fourier multiplier with symbol $\abs{\xi}^k$. Define the operator $P_{\leq R}$ as 
\begin{equation}\label{dfjnskfgnsk2}
\widehat{P_{\leq R}f}(\xi) := \chi(\xi/R)\hat{f}(\xi),
\end{equation}
where $\chi\geq0$ is a smooth cut-off of the ball $B_{1/2}(0)$:
\begin{equation}\label{dfmnjskgdsknj1}
    \chi(\xi):= \begin{cases}
        1 & \text{ for } \xi\in B_{1/2}(0), \\
	0 & \text{ for } \xi\in\R^3\setminus \overline{B_1(0)}.
    \end{cases}
\end{equation}
We also set 
\begin{equation}\label{dfjnskfgnsk3}
P_{>R}:=1-P_{\leq R}.
\end{equation}
\begin{prop}[Bernstein inequalities]\label{prop:Bernstein_inequalities}
	Let $R >0$, $k \geq 0$ and $1\leq p\leq q\leq\infty$. Then, for any $f\in L^p(\R^3)$ the following estimates hold:
	\begin{align*}
		\norm{P_{\leq R}f}{L^q} &\lesssim R^{3\left(\frac{1}{p} - \frac{1}{q}\right)}\norm{f}{L^p}, \\
		\norm{\nabla P_{\leq R}f}{L^p} &\lesssim R\norm{P_{\leq R}f}{L^p} \\
		\norm{P_{>R}f}{L^p} &\lesssim R^{-k}\norm{P_{>R}\abs{D}^kf}{L^p}.
	\end{align*}
\end{prop}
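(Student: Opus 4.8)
The plan is to treat all three estimates by the same mechanism. Each of the relevant operators, namely $P_{\leq R}$, $\nabla P_{\leq R}$, and the operator that will invert $\abs{D}^k$ on high frequencies, is a Fourier multiplier whose symbol is a fixed function rescaled by $R$. Hence it acts as convolution against a kernel whose $L^1$-norm (or $L^s$-norm) can be computed explicitly by a change of variables, and the bounds then follow from Young's convolution inequality, which is exactly what accommodates the full range $1\leq p\leq q\leq\infty$. For the first estimate I would write $P_{\leq R}f=K_R\ast f$ with $K_R:=\mathcal{F}^{-1}[\chi(\cdot/R)]$, where $\chi$ is the cut-off in \eqref{dfmnjskgdsknj1}. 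Since $\chi\in C_c^\infty\subset\mathscr{S}$, the base kernel $K:=\mathcal{F}^{-1}\chi$ is Schwartz, hence in every $L^s$; the scaling $K_R(x)=R^3K(Rx)$ gives $\norm{K_R}{L^s}=R^{3(1-1/s)}\norm{K}{L^s}$. Choosing $s$ by the Young relation $1+1/q=1/s+1/p$, so that $1-1/s=1/p-1/q$, yields $\norm{P_{\leq R}f}{L^q}\leq\norm{K_R}{L^s}\norm{f}{L^p}\lesssim R^{3(1/p-1/q)}\norm{f}{L^p}$.

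For the second estimate the key observation is that $g:=P_{\leq R}f$ has Fourier support in $\overline{B_R}$. I would fix once and for all a function $\tilde\chi\in C_c^\infty$ with $\tilde\chi\equiv1$ on the support of $\chi$; then $\tilde\chi(\cdot/R)\equiv1$ on $\overline{B_R}\supseteq\mathrm{supp}\,\hat g$, so $\tilde\chi(D/R)g=g$ and hence $\partial_j g=\partial_j\tilde\chi(D/R)g$. The symbol of $\partial_j\tilde\chi(D/R)$ is $\i\xi_j\tilde\chi(\xi/R)=R\,m_j(\xi/R)$ with $m_j(\eta):=\i\eta_j\tilde\chi(\eta)\in C_c^\infty$, so this operator is $R$ times convolution with a kernel whose $L^1$-norm is independent of $R$ (again by the scaling $\mathcal{F}^{-1}[m_j(\cdot/R)](x)=R^3(\mathcal{F}^{-1}m_j)(Rx)$). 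Young's inequality then gives $\norm{\nabla P_{\leq R}f}{L^p}\lesssim R\norm{P_{\leq R}f}{L^p}$.

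For the third estimate the symbol of $P_{>R}$ is $\sigma(\xi/R)$ with $\sigma:=1-\chi$ supported in $\{\abs{\eta}\geq1/2\}$. I would recover $P_{>R}f$ from $P_{>R}\abs{D}^kf$ by multiplying with $\abs{D}^{-k}$ localized to the support of $\sigma$: pick $\tilde\sigma\in C^\infty$ vanishing near the origin and equal to $1$ on $\{\abs{\eta}\geq1/2\}\supseteq\mathrm{supp}\,\sigma$, and set $\Phi(\eta):=\abs{\eta}^{-k}\tilde\sigma(\eta)$ and $T:=R^{-k}\Phi(D/R)$. A direct computation on the Fourier side shows $T\,(P_{>R}\abs{D}^kf)=P_{>R}f$, since the product of symbols is $\abs{\xi}^{-k}\tilde\sigma(\xi/R)\cdot\sigma(\xi/R)\abs{\xi}^k=\sigma(\xi/R)$. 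By the same scaling as before, $\norm{T}{L^p\to L^p}\leq R^{-k}\norm{\mathcal{F}^{-1}\Phi}{L^1}$, so the estimate reduces to checking $\mathcal{F}^{-1}\Phi\in L^1(\R^3)$.

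The main obstacle is exactly this $L^1$-kernel claim; everything else is scaling plus Young's inequality. The function $\Phi$ is a smooth symbol of order $-k$, being $O(\abs{\eta}^{-k})$ together with all its derivatives, so $\mathcal{F}^{-1}\Phi$ is Schwartz away from the origin and has at worst an $\abs{x}^{-(3-k)}$ singularity at $x=0$, which is integrable on $\R^3$ precisely because $k>0$ (for $k=0$ the estimate is trivial, as $P_{>R}\abs{D}^0=P_{>R}$). Making this decay-and-singularity analysis rigorous, for instance by splitting $\Phi$ into a piece supported near the origin and a piece at infinity and integrating by parts against $e^{\i x\cdot\xi}$ to extract rapid decay of the kernel for large $\abs{x}$, is the only genuinely technical point of the proof.
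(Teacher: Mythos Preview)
The paper does not supply its own proof of this proposition: it is stated as a standard fact in the preliminaries, with the Littlewood--Paley material being drawn from \cite{BCD11} (where the analogous statement is Lemma~2.1). So there is no ``paper's proof'' to compare against; your task was effectively to reproduce a textbook argument, and your proposal does exactly that, via the canonical route of writing each multiplier as convolution with a rescaled Schwartz-class (or $L^1$) kernel and invoking Young's inequality.

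Your argument is correct. A couple of small remarks on the third estimate, which is the only place you flag a difficulty. First, the phrase ``Schwartz away from the origin'' for $\mathcal{F}^{-1}\Phi$ is a slight abuse: what you mean (and what you actually use) is that it is smooth on $\R^3\setminus\{0\}$ with rapid decay at infinity, which follows from integrating by parts against $e^{\i x\cdot\xi}$ using the symbolic bounds $|\partial^\alpha\Phi(\eta)|\lesssim\langle\eta\rangle^{-k-|\alpha|}$. Second, your singularity bound $|x|^{-(3-k)}$ near $x=0$ is the right heuristic for $0<k<3$; for $k\geq 3$ the symbol $\Phi$ is already in $L^1(\R^3)$, so $\mathcal{F}^{-1}\Phi$ is continuous and bounded near the origin, and there is nothing to check. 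Either way $\mathcal{F}^{-1}\Phi\in L^1$, and the estimate follows with constant $R^{-k}\|\mathcal{F}^{-1}\Phi\|_{L^1}$.
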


The following result describes the action of the heat semigroup on distributions with Fourier transform supported in an annulus.
\begin{prop}[\cite{BCD11}, Lemma 2.4]\label{prop:heat.act.annulus}
	Let $\mathcal{A}\subset\R^3$ be an annulus. Positive constants $c$ and $C$ exist such that for any $p$ in $[1,\infty]$ and any couple $(t,\lambda)$ of positive real numbers, we have
	$$\text{supp}\,\hat{u}\subset\lambda\mathcal{A} \Rightarrow \norm{e^{t\Delta}u}{L^p}\leq Ce^{-ct\lambda^2}\norm{u}{L^p}.$$
\end{prop}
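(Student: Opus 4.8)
The plan is to reduce the statement to a uniform $L^1$ bound on a suitably rescaled convolution kernel and then close with Young's convolution inequality. Since $\widehat{e^{t\Delta}u}(\xi)=e^{-t\abs{\xi}^2}\hat{u}(\xi)$ and $\hat{u}$ is supported in $\lambda\mathcal{A}$, I would first fix a cutoff $\phi\in C_c^\infty(\R^3)$ with $\phi\equiv1$ on $\mathcal{A}$ and supported in a slightly larger annulus $\tilde{\mathcal{A}}$ that is still bounded away from the origin, so that the identity $\hat{u}(\xi)=\phi(\xi/\lambda)\hat{u}(\xi)$ holds. Because $\tilde{\mathcal{A}}$ is separated from zero, there is a constant $c>0$ with $\abs{\zeta}^2\geq2c$ for every $\zeta\in\tilde{\mathcal{A}}$; this is the $c$ that will appear in the exponential decay.

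The idea is to extract the decaying factor directly on the Fourier side. On the support of $\phi(\cdot/\lambda)$, namely $\lambda\tilde{\mathcal{A}}$, one has $\abs{\xi}^2\geq2c\lambda^2$, so I would write
\begin{equation*}
  e^{-t\abs{\xi}^2}\phi(\xi/\lambda) = e^{-ct\lambda^2}\,\widehat{h_{t,\lambda}}(\xi), \qquad \widehat{h_{t,\lambda}}(\xi):=e^{-t(\abs{\xi}^2-c\lambda^2)}\phi(\xi/\lambda),
\end{equation*}
which gives the factorization $e^{t\Delta}u=e^{-ct\lambda^2}\,h_{t,\lambda}*u$. By Young's convolution inequality it then suffices to prove that $\norm{h_{t,\lambda}}{L^1}\leq C$ uniformly in $t>0$ and $\lambda>0$, since in that case $\norm{e^{t\Delta}u}{L^p}\leq e^{-ct\lambda^2}\norm{h_{t,\lambda}}{L^1}\norm{u}{L^p}$ for every $p\in[1,\infty]$, which is exactly the claimed estimate.

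To obtain the uniform kernel bound I would rescale so that the two parameters collapse into one. Setting $\xi=\lambda\zeta$, $y=\lambda x$ and $s=t\lambda^2$, a change of variables gives $h_{t,\lambda}(x)=\lambda^3g_s(\lambda x)$, where $g_s:=\mathcal{F}^{-1}[m_s]$ and $m_s(\zeta):=e^{-s(\abs{\zeta}^2-c)}\phi(\zeta)$, whence $\norm{h_{t,\lambda}}{L^1}=\norm{g_s}{L^1}$. The symbol $m_s$ is supported in the fixed compact annulus $\tilde{\mathcal{A}}$, on which $\abs{\zeta}^2-c\geq c>0$, so that $e^{-s(\abs{\zeta}^2-c)}\leq e^{-cs}$ there. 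The crucial observation is that every spatial derivative of $m_s$ produces only terms of the form (polynomial in $s$ and $\zeta$) times $e^{-s(\abs{\zeta}^2-c)}$; since $\abs{\zeta}$ is bounded on the support and $\sup_{s>0}s^k e^{-cs}<\infty$ for each $k$, one concludes $\sup_{\zeta}\abs{\partial^\beta m_s(\zeta)}\leq C_\beta$ uniformly in $s>0$ for every multi-index $\beta$.

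With these uniform symbol bounds, integration by parts (turning factors of $y_j$ into $\zeta$-derivatives of $m_s$) yields $\abs{y}^{2N}\abs{g_s(y)}\leq C_N$ for every $N$, uniformly in $s$, hence the decay $\abs{g_s(y)}\lesssim_N(1+\abs{y})^{-2N}$. Splitting $\int_{\R^3}\abs{g_s(y)}\,\d y$ into the region $\abs{y}\leq1$ (controlled by $\norm{m_s}{L^1}\leq C$) and the region $\abs{y}>1$ (controlled by the $\abs{y}^{-2N}$ decay with, say, $N=2$, which is integrable on $\R^3$) gives the uniform bound $\norm{g_s}{L^1}\leq C$, and the argument closes. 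I expect the main obstacle to be precisely this last uniform-in-$s$ control: one must verify that the potentially dangerous powers of $s$ generated by differentiating the Gaussian factor are absorbed by the exponential decay $e^{-cs}$, which is exactly what the separation of the annulus $\tilde{\mathcal{A}}$ from the origin guarantees.
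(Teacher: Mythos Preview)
Your proof is correct and follows the standard argument: localize with a cutoff on the annulus, factor out the exponential decay, rescale to a one-parameter family of compactly supported symbols, and control the $L^1$-norm of the inverse Fourier transform via uniform bounds on all derivatives (the powers of $s$ being absorbed by the remaining $e^{-cs}$). The paper does not give its own proof of this proposition; it simply quotes it as Lemma~2.4 of \cite{BCD11}, and your argument is essentially the one found there.
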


\subsection{Beltrami fields}
A vector field $\fun{B_\lambda}{\R^3}{\R^3}$ is a (strong) \textit{Beltrami field} with frequency $\lambda\in\R\setminus\{0\}$ if it is an eigenfunction of the curl operator with eigenvalue $\lambda$, that is,
$$\nabla\times B_\lambda=\lambda B_\lambda.$$
These vector fields are divergence free and have zero mean:
$$\div B_\lambda=0 \quad\text{and}\quad \int_{\R^3}B_\lambda\,\d x=0.$$
They additionally are eigenvectors of the Laplacian and steady solutions to the Euler equation:
$$\Delta B_\lambda=-\lambda^2B_\lambda, \quad (B_\lambda\cdot\nabla)B_\lambda=-\nabla\frac{|B_\lambda|^2}{2}.$$
With this in mind, one can check that $(0,e^{-\eta t\lambda^2}B_\lambda)$ is a solution of \eqref{eq:MHD} with initial data $(0,B_\lambda)$, for a suitable choice of the  pressure. Given a Beltrami field $B_{\lambda}$, the rescaled vector field $a B_{\lambda}$
is still a Beltrami field (with frequency $\lambda$), for all $a \in \R$. Thus 
$(0,e^{-\eta t  \lambda^2} a B_\lambda)$ is still a solution of \eqref{eq:MHD} (with appropriate choice of the pressure). 
This rescaling shows that once we fix a normed space such that $\|B_{\lambda}\| < + \infty$, 
we can construct 
global smooth solutions solutions with initial data of any assigned norm, making vary $\alpha \in \R$. 
It is worth noting that $\|B_{\lambda}\|_{L^2(\R^3)} = + \infty$ for any Beltrami field.

\section{Global existence result}
\noindent
We prove global existence and uniqueness of a (unique) strong solution starting from initial data of arbitrary size in critical spaces, satisfying some smallness condition on the difference between the initial velocity and magnetic fields. The result lies in the regularizing effect of the interactions between the velocity and the magnetic fields due to cancellations. We restate the result here for the reader's convenience:
\setcounter{thm2}{0}
\begin{thm2}\label{thm:global_existence_large_data}
Let $r\geq3$. There exists  $c >0$ sufficiently small that the following holds.
	Let $u_0,b_0\in H^r(\R^3)$ be such that
	\begin{equation}\label{eq:hypothesis_data_for_global_existence}
		\norm{u_0-b_0}{H^r} \leq \rho \quad\text{and}\quad \norm{e^{\eta t\Delta}u_0}{L_t^2W_x^{k,\infty}} + \norm{e^{\eta t\Delta}b_0}{L_t^2W_x^{k,\infty}}  \lesssim 1+N^k, \quad \forall \quad 0\leq k\leq r,
	\end{equation}
	with $0<\rho\leq cN^{-r-1}$. Then, there exists a unique global regular solution $(u,b)$ to \eqref{eq:MHD} with $\nu = \eta$ that satisfies
	$$\norm{u(t)-e^{\eta t \Delta}u_0}{H^k} + \norm{b(t)-e^{\eta t\Delta}b_0}{H^k} \leq C\rho N^k, \quad\forall 0\leq k\leq r, \quad\forall t\geq0.$$
\end{thm2}
\begin{proof}
	Let $( u, b)$ be a local strong solution to \eqref{eq:MHD} with $\eta = \nu$, whose existence is guaranteed by \cite{DL72}. Consider 
	\begin{equation}\label{eq:def_perturbations}
		v(t):=u(t)-e^{\eta t\Delta} u_0 \quad\text{and}\quad h(t):=b(t)-e^{\eta t\Delta}b_0.
	\end{equation}
One can immediately check that $(v,h)$ satisfy the equations
	\begin{equation}\label{fmdnjskgnjsgv}
		\left\lbrace\begin{aligned}
			\partial_t v - \eta \Delta v+(v \cdot\nabla)v-(h \cdot \nabla)h &= -\nabla p - \left(e^{\eta t \Delta}u_0\cdot\nabla\right)e^{\eta t \Delta}u_0 + \left(e^{\eta t \Delta}b_0\cdot\nabla\right)e^{\eta t \Delta}b_0 - \left(e^{\eta t \Delta}u_0\cdot\nabla\right)v \\
			&\quad +\left(e^{\eta t \Delta}b_0\cdot\nabla\right)h -\left(v\cdot\nabla\right)e^{\eta t \Delta}u_0 
			+ \left(h\cdot\nabla\right)e^{\eta t\Delta}b_0, \\
			\partial_t h- \eta \Delta h+(v\cdot\nabla)h-(h\cdot\nabla)v 
			&= -\left(e^{\eta t \Delta}u_0\cdot\nabla\right)e^{\eta t\Delta}b_0 
			+ \left(e^{\eta t \Delta}b_0\cdot\nabla\right)e^{\eta t \Delta}u_0 - \left(e^{\eta t \Delta}u_0\cdot\nabla\right)h \\
			&\quad +\left(e^{\eta t \Delta}b_0\cdot\nabla\right)v 
			- \left(v\cdot\nabla\right)e^{\eta t \Delta}b_0 + \left(h\cdot\nabla\right)e^{\eta t \Delta}u_0, \\
			\div v=\div h=0, \hspace{2cm} \\
			v(0,\cdot)=0=h(0,\cdot). \hspace{1.7cm}
		\end{aligned}\right.
	\end{equation}	
	For each $0\leq k\leq r$, denote the energy of level $k$ by
	$$e_k(t):=\sum_{\abs{\alpha}\leq k}\int_{\R^3}\left(\abs{\partial^\alpha v(x)}^2 + \abs{\partial^\alpha h(x)}^2\right)\,\d x.$$
	We are going to carry out an induction process on the energy levels. The first thing that we do is to rewrite the following terms of the equations for $v$:
	\begin{equation}\label{eq:rewrite.lin.terms.level0v}
		-\left(e^{\eta t \Delta}u_0\cdot\nabla\right)e^{\eta t \Delta}u_0 
		+ \left(e^{\eta t \Delta}b_0\cdot\nabla\right)e^{\eta t \Delta}b_0 
		= \left(e^{\eta t \Delta}u_0\cdot\nabla\right)e^{\eta t \Delta}(b_0-u_0) 
		+ \left(e^{\eta t \Delta}(b_0-u_0)\cdot\nabla\right)e^{\eta t \Delta}b_0
	\end{equation}
	and the same in the equations for $h$:
	\begin{equation}\label{eq:rewrite.lin.terms.level0h}
		-\left(e^{\eta t \Delta}u_0\cdot\nabla\right)e^{\eta t \Delta}b_0 
		+ \left(e^{\eta t \Delta}b_0\cdot\nabla\right)e^{\eta t \Delta}u_0 
		= \left(e^{\eta t \Delta}(b_0-u_0)\cdot\nabla\right)e^{\eta t \Delta}b_0 
		+ \left(e^{\eta t \Delta}b_0\cdot\nabla\right)e^{\eta t \Delta}(u_0-b_0).
	\end{equation}
	For the zero energy level $k=0$, take the scalar product of the equations by $v$ and $h$, respectively, integrate them over space and add them:
	\begin{equation*}
		\begin{split}
			\frac{1}{2}\frac{\d}{\d t}\int_{\R^3} \Bigg(|v(t)|^2&+|h(t)|^2\Bigg)\,\d x 
			+ \eta \int_{\R^3}\Bigg(|\nabla v(t)|^2+|\nabla h(t)|^2\Bigg)\,\d x \\
			&= \int_{\R^3} \left(e^{\eta t \Delta}u_0\cdot\nabla\right)e^{\eta t \Delta}(b_0-u_0)\cdot v \,\d x \quad+ \int_{\R^3} \left(e^{\eta t \Delta}(b_0-u_0)\cdot\nabla\right)e^{\eta t \Delta}b_0\cdot v\,\d x \\
			&\quad+ \int_{\R^3} \left(e^{\eta t \Delta}(b_0-u_0)\cdot\nabla\right)e^{\eta t \Delta}b_0\cdot h\,\d x + \int_{\R^3} \left(e^{\eta t \Delta}b_0\cdot\nabla\right)e^{\eta t \Delta}(u_0-b_0)\cdot h\,\d x \\
			&\quad+ \int_{\R^3}  \left(e^{\eta t \Delta}b_0\cdot\nabla\right)h\cdot v\,\d x + \int_{\R^3} \left(e^{\eta t \Delta}b_0\cdot\nabla\right)v\cdot h\,\d x \\
			&\quad- \int_{\R^3} \left(v\cdot\nabla\right)e^{\eta t \Delta}u_0\cdot v\,\d x + \int_{\R^3} \left(h\cdot\nabla\right)e^{\eta t \Delta}u_0\cdot h\,\d x \\
			&\quad+ \int_{\R^3}\left(h\cdot\nabla\right)e^{\eta t \Delta}b_0\cdot v\,\d x - \int_{\R^3} \left(v\cdot\nabla\right)e^{\eta t \Delta}b_0\cdot h\,\d x.
		\end{split}
	\end{equation*}
	First of all, notice that, since we have taken the scalar product of the  
	the first equation against  a divergence-free 
	vector field and then integrated over $\R^3$, the pressure term contribution 
	has vanished, as it can be seen after integration by parts. 
	We will also use the fact that $u_0, b_0, v, h$ are divergence free,
	and that 
	 integration by parts of the nabla into expressions like $\int_{\R^3} (f \cdot \nabla) g \cdot h$ with $\div f=0$ gives 
	$$
	\int_{\mathbb{R}^3} (f \cdot \nabla) g \cdot h = - \int_{\mathbb{R}^3} (f \cdot \nabla) h \cdot g. 
	$$
	
	 Now, each integral will be bounded separately. In each line, both integrals that appear are bounded in the same way, so we write just one of them. In all of them, one has to integrate by parts first and then use Hölder's inequality, the continuity of the heat semigroup in $L^2$ and $\epsilon$-Young's inequality, combined with the hypothesis \eqref{eq:hypothesis_data_for_global_existence}. The constant $C$ on the right hand side is $\varepsilon$ dependent, however we will not keep track of this in the notations.
		For the first line:
	\begin{equation*}
		\begin{split}
			\abs{\int_{\R^3} \left(e^{\eta t \Delta}u_0\cdot\nabla\right)e^{\eta t \Delta}(b_0-u_0)\cdot v \,\d x} &\leq \norm{\nabla v}{L^2} \norm{e^{\eta t \Delta}u_0}{L^\infty} \norm{e^{\eta t \Delta}(b_0-u_0)}{L^2} \\
			&\leq \norm{\nabla v}{L^2} \norm{e^{\eta t \Delta}u_0}{L^\infty} \norm{b_0-u_0}{L^2} \\
			&\leq \epsilon\norm{\nabla v}{L^2}^2 + C\rho^2\norm{e^{\eta t \Delta}u_0}{L^\infty}^2,
		\end{split}
	\end{equation*}
	where in the integration by parts we have used the fact that $e^{\eta t \Delta}u_0$ is divergence-free. For the second line:
	\begin{equation*}
		\begin{split}
			\int_{\R^3} \left(e^{\eta t \Delta}(b_0-u_0)\cdot\nabla\right)e^{\eta t \Delta}b_0\cdot h\,\d x &\leq \norm{\nabla h}{L^2} \norm{e^{\eta t \Delta}(b_0-u_0)}{L^2} \norm{e^{\eta t \Delta}b_0}{L^\infty} \\
			&\leq \epsilon\norm{\nabla h}{L^2}^2 + C\rho^2\norm{e^{\eta t \Delta}b_0}{L^\infty}^2.
		\end{split}
	\end{equation*}
	The integrals in the third line cancel each other just by integrating by parts. Finally, the integrals in the last two lines are bounded as follows:
	\begin{equation}\label{eq:Gronwall_exp_r0}
		\begin{split}
			\abs{\int_{\R^3} \left(v\cdot\nabla\right)e^{\eta t \Delta}u_0\cdot v\,\d x} &\leq \norm{\nabla v}{L^2} \norm{v}{L^2} \norm{e^{t\eta\Delta}u_0}{L^\infty} \\
			&\leq \epsilon\norm{\nabla v}{L^2}^2 + C\norm{e^{\eta t \Delta}u_0}{L^\infty}^2\norm{v}{L^2}^2.
		\end{split}
	\end{equation}
	Put all the inequalities together: choosing $\epsilon=\eta/16$, the gradient terms can be absorbed by the diffusion on the left-hand side. We end up with
	\begin{multline*}
		\frac{\d}{\d t}e_0(t) + \eta \int_{\R^3}\Bigg(|\nabla v(t)|^2+|\nabla h(t)|^2\Bigg)\,\d x \\
		\leq C\left(\norm{e^{\eta t \Delta}u_0}{L^\infty}^2 + \norm{e^{\eta t \Delta}b_0}{L^\infty}^2\right) e_0(t) + C\rho^2\left(\norm{e^{\eta t \Delta}u_0}{L^\infty}^2 + \norm{e^{\eta t \Delta}b_0}{L^\infty}^2\right).
	\end{multline*}
	Applying Grönwall's inequality and noticing that $e_0(0)=0$,
	$$e_0(t) \leq C\int_0^t \rho^2\left(\norm{e^{\eta s  \Delta}u_0}{L^\infty}^2 
	+ \norm{e^{\eta s  \Delta}b_0}{L^\infty}^2\right) e^{C\int_s^t \left(\norm{e^{ \eta \tau  \Delta}u_0}{L^\infty}^2 
	+ \norm{e^{\eta \tau \Delta}b_0}{L^\infty}^2\right)\,\d\tau}\,\d s \leq C\rho^2,$$
	where we have used the hypothesis \eqref{eq:hypothesis_data_for_global_existence} with $k=0$.
	
	\medskip
	
	For the induction step, suppose that the following estimates hold:
	\begin{equation}\label{eq:induction_hypothesis}
		e_k(t) \leq C\rho^2N^{2k}, \quad \forall t\geq0,\; \forall1\leq k\leq r-1.
	\end{equation}
	We must prove 
	\begin{equation}\label{eq:induction_hypothesis}
		e_r(t) \leq C\rho^2N^{2r}, \quad \forall t\geq0.
	\end{equation}
     To do so we project the equations for $(v,h)$ outside of the (Fourier) ball of 
	radius using the smooth frequency cut-off defined in \eqref{dfmnjskgdsknj1}. 
	We thus define
	\begin{equation}\label{eq:notation.proj}
	f^{\leq 1} := P_{\leq 1}f \quad\text{and}\quad f^{>1}:=P_{>1}f.
\end{equation}
where $P_{\leq 1}, P_{>1}$ are defined in \eqref{dfjnskfgnsk2}-\eqref{dfjnskfgnsk3}, and
    \begin{equation*}
        \pib{e_r}(t) := \sum_{\abs{\alpha}\leq r} \int_{\R^3} \bigg(\abs{\partial^\alpha \pib{v}}^2 + \abs{\partial^\alpha \pib{h}}^2\bigg)\d x, \quad \pob{e_r}(t) := \sum_{\abs{\alpha}\leq r} \int_{\R^3} \bigg(\abs{\partial^\alpha \pob{v}}^2 + \abs{\partial^\alpha \pob{h}}^2\bigg)\d x.
    \end{equation*}
    We note that 
    \begin{equation}\label{BPIN}
       \|\nabla f^{>1} \|_{L^2} \geq \frac12 \| f^{>1} \|_{L^2}
    \end{equation}
    and that by Bernstein inequalities \eqref{prop:Bernstein_inequalities} we have for all  multi-index $\alpha$:
    \begin{equation}\label{BPIN2}
      \| \partial^{\alpha} f^{\leq 1} \|_{L^p} \leq C_{\alpha} \| f^{\leq 1} \|_{L^2} \leq C_{\alpha} \| f \|_{L^2}, \qquad p \in [2, \infty], 
    \end{equation}
   and in particular:
    \begin{equation}\label{BPIN3}
      \|  f^{\leq 1} \|_{L^p} \leq C \| f^{\leq 1} \|_{L^2} \leq C \| f \|_{L^2}, \qquad p \in [2, \infty], 
    \end{equation}

    For $1\leq\abs{\alpha}\leq r$, apply $\partial^\alpha$ to both equations, multiply each one by $\partial^\alpha\pob{v}$ and $\partial^\alpha\pob{h}$, respectively, and use Leibnitz product's rule for the derivatives of the products. Finally, integrate the equations over space and use that $P_{>1}$ is a Fourier multiplier (self-adjoint operator in $L^2$) and a projection ($P_{>1}\pob{v}=\pob{v}$) to obtain the following:
	$$\frac{1}{2}\frac{\d}{\d t} \int_{\R^3} \bigg(\abs{\partial^\alpha \pob{v}}^2 +  \abs{\partial^\alpha \pob{h}}^2\bigg)\d x + \eta \int_{\R^3} \left(\abs{\nabla\partial^\alpha \pob{v}}^2 + \abs{\nabla\partial^\alpha \pob{h}}^2\right)\d x= \sum_{\beta\leq\alpha}\binom{\alpha}{\beta}(I_1+I_2+I_3),$$
	where we separate the fully nonlinear term
	\begin{align*}
		I_1 &:= -\int_{\R^3} \left(\partial^\beta v\cdot\nabla\right)\partial^{\alpha-\beta}v\cdot\partial^\alpha \pob{v}\,\d x + \int_{\R^3} \left(\partial^\beta h\cdot\nabla\right)\partial^{\alpha-\beta}h\cdot\partial^\alpha \pob{v}\,\d x \\
		&\quad- \int_{\R^3} \left(\partial^\beta v\cdot\nabla\right)\partial^{\alpha-\beta}h\cdot\partial^\alpha \pob{h}\,\d x + \int_{\R^3} \left(\partial^\beta h\cdot\nabla\right)\partial^{\alpha-\beta}v\cdot\partial^\alpha \pob{h}\,\d x,
	\end{align*}
	the linear term
    \begin{equation}\label{eq:lin.term.levelk}
        \begin{split}
            I_2 &:= \int_{\R^3} \left(e^{\eta t \Delta}\partial^\beta u_0\cdot\nabla\right)e^{\eta t \Delta}\partial^{\alpha-\beta}(b_0-u_0)\cdot\partial^\alpha \pob{v} \,\d x \\
    		&\quad+ \int_{\R^3} \left( e^{\eta t\Delta}\partial^\beta(b_0-u_0)\cdot\nabla\right)e^{\eta t \Delta}\partial^{\alpha-\beta}b_0\cdot\partial^\alpha \pob{v} \,\d x \\
    		&\quad+ \int_{\R^3} \left( e^{\eta t \Delta}\partial^\beta(b_0-u_0)\cdot\nabla\right)e^{\eta t \Delta}\partial^{\alpha-\beta}b_0\cdot \partial^\alpha \pob{h}\,\d x \\
    		&\quad+ \int_{\R^3} \left(e^{\eta t \Delta}\partial^\beta b_0\cdot\nabla\right)e^{\eta t\Delta}\partial^{\alpha-\beta}(u_0-b_0)\cdot \partial^\alpha \pob{h}\,\d x
        \end{split}
    \end{equation}
	and the mixed term
	\begin{align*}
		I_3 &:= -\int_{\R^3} \left(e^{\eta t \Delta}\partial^{\beta}u_0\cdot\nabla\right)\partial^{\alpha-\beta}v\cdot\partial^\alpha\pob{v}\,\d x - \int_{\R^3} \left(e^{\eta t \Delta}\partial^{\beta}u_0\cdot\nabla\right)\partial^{\alpha-\beta}h\cdot\partial^\alpha\pob{h}\,\d x \\
		&\quad +\int_{\R^3} \left(e^{\eta t \Delta}\partial^\beta b_0\cdot\nabla\right)\partial^{\alpha-\beta}h\cdot \partial^\alpha \pob{v}\,\d x + \int_{\R^3} \left(e^{\eta t \Delta}\partial^\beta b_0\cdot\nabla\right)\partial^{\alpha-\beta}v\cdot \partial^\alpha \pob{h}\,\d x \\
		&\quad - \int_{\R^3} \left(\partial^\beta v\cdot\nabla\right)e^{\eta t \Delta}\partial^{\alpha-\beta}u_0\cdot \partial^\alpha \pob{v}\,\d x + \int_{\R^3} \left(\partial^\beta h\cdot\nabla\right)e^{\eta t \Delta}\partial^{\alpha-\beta}u_0\cdot \partial^\alpha \pob{h}\,\d x \\
		&\quad + \int_{\R^3}\left(\partial^\beta h\cdot\nabla\right)e^{\eta t \Delta}\partial^{\alpha-\beta}b_0\cdot \partial^\alpha \pob{v}\,\d x - \int_{\R^3} \left(\partial^\beta v\cdot\nabla\right)e^{\eta t \Delta}\partial^{\alpha-\beta}b_0\cdot \partial^\alpha \pob{h}\,\d x.
	\end{align*}
	The following step is to bound each integral. We first focus on $I_1$. We separate the terms with $|\beta|=0$ from the rest:
	\begin{align*}
		\sum_{\beta\leq\alpha} \int_{\R^3} \left(\partial^\beta v\cdot\nabla\right)\partial^{\alpha-\beta}v\cdot\partial^\alpha \pob{v} &= \int_{\R^3}(v\cdot\nabla)\partial^\alpha v \cdot\partial^\alpha \pob{v} + \sum_{0<\beta\leq\alpha} \int_{\R^3} \left(\partial^\beta v\cdot\nabla\right)\partial^{\alpha-\beta}v\cdot\partial^\alpha \pob{v}, \\
		\sum_{\beta\leq\alpha} \int_{\R^3} \left(\partial^\beta h\cdot\nabla\right)\partial^{\alpha-\beta}h\cdot\partial^\alpha \pob{v} &= \int_{\R^3} \left(h\cdot\nabla\right)\partial^{\alpha}h\cdot\partial^\alpha \pob{v} + \sum_{0<\beta\leq\alpha} \int_{\R^3} \left(\partial^\beta h\cdot\nabla\right)\partial^{\alpha-\beta}h\cdot\partial^\alpha \pob{v}, \\
		\sum_{\beta\leq\alpha} \int_{\R^3} \left(\partial^\beta v\cdot\nabla\right)\partial^{\alpha-\beta}h\cdot\partial^\alpha \pob{h} &= \int_{\R^3} \left(v\cdot\nabla\right)\partial^{\alpha}h\cdot\partial^\alpha \pob{h} + \sum_{0<\beta\leq\alpha} \int_{\R^3} \left(\partial^\beta v\cdot\nabla\right)\partial^{\alpha-\beta}h\cdot\partial^\alpha \pob{h}, \\
		\sum_{\beta\leq\alpha} \int_{\R^3} \left(\partial^\beta h\cdot\nabla\right)\partial^{\alpha-\beta}v\cdot\partial^\alpha \pob{h} &= \int_{\R^3} \left(h\cdot\nabla\right)\partial^{\alpha} v \cdot\partial^\alpha \pob{h} + \sum_{0<\beta\leq\alpha} \int_{\R^3} \left(\partial^\beta h\cdot\nabla\right)\partial^{\alpha-\beta}v\cdot\partial^\alpha \pob{h},
	\end{align*}
	and for these terms with $|\beta|=0$ is zero we write
	\begin{equation}\label{eq:split.int.beta=0}
		\int_{\R^3}(v\cdot\nabla)\partial^\alpha v \cdot\partial^\alpha \pob{v} = \int_{\R^3}(v\cdot\nabla)\partial^\alpha \pib{v} \cdot\partial^\alpha \pob{v} + \int_{\R^3}(v\cdot\nabla)\partial^\alpha \pob{v} \cdot\partial^\alpha \pob{v}
	\end{equation}
	(the same for the other integrals with $|\beta|=0$). Now, integrating by parts and recalling that $v$ is 
	 divergence-free we notice that the the second integral on the right-hand side of \eqref{eq:split.int.beta=0} is zero. 
	In order to estimate  the first term on 
	the the right hand we use inequality \eqref{BPIN2}: 
	\begin{equation*}
		\begin{split}
			\abs{\int_{\R^3}(v\cdot\nabla)\partial^\alpha \pib{v} \cdot\partial^\alpha \pob{v}} 
			&\lesssim \norm{v}{L^2} \| \pib{v} \|_{L^2} \norm{\nabla \partial^{\alpha}\pob{v}}{L^2} \\
			&\lesssim \norm{v}{L^2}^2 \norm{\nabla\partial^{\alpha}\pob{v}}{L^2} \\
			&\lesssim \epsilon\norm{\nabla\partial^{\alpha}\pob{v}}{L^2}^2 + C\rho^4;
		\end{split}
	\end{equation*}
	note that the first inequality is justified after integrating by parts the nabla, 
	while in the last inequality we have used the estimate $\|v(t)\|^2_{L^2} \leq e_0(t)\lesssim \rho^2$ and Cauchy-Schwartz. 
	A similar estimate applies to  the rest of the integrals with $|\beta|=0$ in $I_1$. 
	
	The fully nonlinear terms with $|\beta|>0$ will be handled again splitting $v :=  \pob{v} + \pib{v}$ as well. 
	We begin with the first integral, separating
	\begin{equation*}
		\begin{split}
			\int_{\R^3} \left(\partial^\beta v\cdot\nabla\right)\partial^{\alpha-\beta}v\cdot\partial^\alpha \pob{v}\,\d x &= \int_{\R^3} \left(\partial^\beta \pib{v}\cdot\nabla\right)\partial^{\alpha-\beta}\pib{v}\cdot\partial^\alpha \pob{v}\,\d x \\
			&\quad+ \int_{\R^3} \left(\partial^\beta \pib{v}\cdot\nabla\right)\partial^{\alpha-\beta}\pob{v}\cdot\partial^\alpha \pob{v}\,\d x \\
			&\quad+ \int_{\R^3} \left(\partial^\beta \pob{v}\cdot\nabla\right)\partial^{\alpha-\beta}\pib{v}\cdot\partial^\alpha \pob{v}\,\d x \\
			&\quad+ \int_{\R^3} \left(\partial^\beta \pob{v}\cdot\nabla\right)\partial^{\alpha-\beta}\pob{v}\cdot\partial^\alpha \pob{v}\,\d x.
		\end{split}
	\end{equation*}
	We estimate term by term. The first one is bounded as follows:
	\begin{equation*}
		\begin{split}
			\abs{\int \left(\partial^\beta \pib{v}\cdot\nabla\right)\partial^{\alpha-\beta}\pib{v}\cdot\partial^\alpha \pob{v}} 
			&\leq \norm{\nabla\partial^\alpha\pob{v}}{L^2} \norm{\partial^\beta\pib{v}}{L^4} 
			\norm{\partial^{\alpha-\beta}\pib{v}}{L^4} \\
			&\lesssim \norm{\nabla\partial^\alpha\pob{v}}{L^2} \| v \|_{L^2}^2 \\
			&\leq \epsilon\norm{\nabla\partial^\alpha\pob{v}}{L^2}^2 + C\norm{v}{L^2}^4 \\
			&\leq \epsilon\norm{\nabla\partial^\alpha\pob{v}}{L^2}^2 + C\rho^4\,,
		\end{split}
	\end{equation*}
where the first inequality is justified after integrating by parts the nabla (that we will do several times in the following), then	
we have used \eqref{BPIN2} and the estimate $\|v(t)\|^2_{L^2} \leq e_0(t)\lesssim\rho^2$.

We bound the second terms as:
	\begin{equation*}
		\begin{split}
			\abs{\int \left(\partial^\beta \pib{v}\cdot\nabla\right)\partial^{\alpha-\beta}\pob{v}\cdot\partial^\alpha \pob{v}} 
			&\leq \norm{\nabla\partial^\alpha\pob{v}}{L^2} \norm{\partial^\beta\pib{v}}{L^\infty} \norm{\partial^{\alpha-\beta}\pob{v}}{L^2} \\
			&\lesssim \norm{\nabla\partial^\alpha\pob{v}}{L^2} \|v\|_{L^2} \norm{\partial^{\alpha-\beta}\pob{v}}{L^2} \\
			&\leq \epsilon\norm{\nabla\partial^\alpha\pob{v}}{L^2}^2 
		+ C\norm{v}{L^2}^2\norm{\partial^{\alpha-\beta}\pob{v}}{L^2}^2  \\
			&\leq \epsilon\norm{\nabla\partial^\alpha\pob{v}}{L^2}^2 + C\rho^2\pob{e}_r(t).
		\end{split}
	\end{equation*}
	where we have used \eqref{BPIN2} and the estimate $e_0(t)\lesssim\rho^2$.
	
	For the third one
    \begin{equation*}
        \begin{split}
            \abs{\int \left(\partial^\beta \pob{v}\cdot\nabla\right) \partial^{\alpha - \beta}\pib{v}\cdot\partial^\alpha \pob{v}} &\leq \norm{\nabla\partial^\alpha\pob{v}}{L^2} \norm{\partial^\beta \pob{v}}{L^2}\norm{\partial^{\alpha - \beta}\pib{v}}{L^\infty} \\
            &\lesssim  \norm{\nabla\partial^\alpha\pob{v}}{L^2}  \norm{\partial^{\beta}\pob{v}}{L^2} \| v \|_{L^2} \\
            &\lesssim \epsilon\norm{\nabla\partial^\alpha\pob{v}}{L^2}^2 + C\rho^2\pob{e_r}(t).
        \end{split}
    \end{equation*}

	For the fourth therm we note that, since $|\beta| \neq 0$, 
we have $|\alpha - \beta| < |\alpha|$. We then estimate
	\begin{equation}\label{Corerenjwk}
		\begin{split}
			\abs{\int \left(\partial^\beta \pob{v}\cdot\nabla\right)\partial^{\alpha-\beta}\pob{v}\cdot\partial^\alpha \pob{v}} &\leq \norm{\nabla\partial^\alpha\pob{v}}{L^2} \norm{\partial^\beta\pob{v}}{L^3} \norm{\partial^{\alpha-\beta}\pob{v}}{L^6} \\
			&\lesssim \norm{\nabla\partial^\alpha\pob{v}}{L^2} \norm{\nabla\partial^\beta\pob{v}}{L^2}^{1/2}\norm{\partial^\beta\pob{v}}{L^2}^{1/2} \norm{\nabla\partial^{\alpha-\beta}\pob{v}}{L^2} \\
						&\lesssim  
	 \Big(\sum_{\gamma: \, 0 \leq |\gamma| \leq |\alpha| } \norm{\nabla\partial^\gamma\pob{v}}{L^2}^{\frac32} \Big)
				\norm{\partial^\beta\pob{v}}{L^2}^{1/2} 
				\norm{\nabla\partial^{\alpha-\beta}\pob{v}}{L^2} \\  
		&\lesssim  
	 \Big( \sum_{\gamma: \, 0 \leq |\gamma| \leq |\alpha| } \norm{\nabla\partial^\gamma\pob{v}}{L^2}^{\frac32} \Big) \pob{e}_r(t)^{\frac34}
\\
		&\leq \varepsilon \Big(  \sum_{\gamma: \, 0 \leq |\gamma| \leq |\alpha| }  \norm{\nabla\partial^\gamma\pob{v}}{L^2}^{2} \Big) 
			+C\pob{e}_r(t)^3,
		\end{split}
	\end{equation}
where  we have used the Gagliardo-Nirenberg inequality as in Remark \ref{rmk:GN.examples} 
and the Sobolev embedding $\Dot{H}^1\hookrightarrow L^6$ and in the last inequality we have used the $\varepsilon$-Young inequality in the form $|a| |b| \leq \varepsilon |a|^{\frac43} + C_{\varepsilon} |b|^4 $.
    The exact same bounds, with obvious modifications, works for the other integrals in $I_1$, leading to the set of estimates:
	\begin{align*}
		\abs{\int \left(\partial^\beta v\cdot\nabla\right)\partial^{\alpha-\beta}v\cdot\partial^\alpha \pob{v}} 
		&
		\leq \varepsilon \sum_{\gamma: \, 0 \leq |\gamma| \leq |\alpha| } \norm{\nabla\partial^\gamma\pob{v}}{L^2}^{2} + C\rho^4 + C\rho^2\pob{e}_r(t) + C\pob{e}_r(t)^3, \\
		\abs{\int \left(\partial^\beta h\cdot\nabla\right)\partial^{\alpha-\beta}h\cdot\partial^\alpha \pob{v}} 
		&\leq  \varepsilon \sum_{\gamma: \, 0 \leq |\gamma| \leq |\alpha| } \norm{\nabla\partial^\gamma\pob{v}}{L^2}^{2} 
		+ C\rho^4 + C\rho^2\pob{e}_r(t) + C\pob{e}_r(t)^3, \\
		\abs{\int \left(\partial^\beta v\cdot\nabla\right)\partial^{\alpha-\beta}h\cdot\partial^\alpha \pob{h}} 
		&\leq 
    \varepsilon \sum_{\gamma: \, 0 \leq |\gamma| \leq |\alpha| } \norm{\nabla\partial^\gamma\pob{h}}{L^2}^{2} + C\rho^4 + C\rho^2\pob{e}_r(t) + C\pob{e}_r(t)^3, \\
		\abs{\int \left(\partial^\beta h\cdot\nabla\right)\partial^{\alpha-\beta}v\cdot\partial^\alpha \pob{h}} &\leq 
 \varepsilon \sum_{\gamma: \, 0 \leq |\gamma| \leq |\alpha| } \norm{\nabla\partial^\gamma\pob{h}}{L^2}^{2} + C\rho^4 + C\rho^2\pob{e}_r(t) + C\pob{e}_r(t)^3.
    \end{align*}

    \medskip
    
    The first integral in $I_2$, that is linear in $v,h$, is bounded as follows:
	\begin{equation*}
		\begin{split}
			\Bigg| \int_{\R^3} \left(e^{\eta t \Delta}\partial^\beta u_0\cdot\nabla\right)e^{\eta t \Delta}\partial^{\alpha-\beta}(b_0-u_0)&\cdot\partial^\alpha \pob{v} \,\d x \Bigg| \\
			&\leq \norm{\nabla\partial^\alpha \pob{v}}{L^2} \norm{e^{\eta t \Delta}\partial^\beta u_0}{L^\infty} \norm{e^{\eta t\Delta}\partial^{\alpha-\beta}(b_0-u_0)}{L^2} \\
			&\leq \epsilon\norm{\nabla\partial^\alpha \pob{v}}{L^2}^2 + C\rho^2\norm{e^{\eta t\Delta}\partial^\beta u_0}{L^\infty}^2,
		\end{split}
	\end{equation*}
where we have integrated by part the nabla in order to justify the first inequality and we have used 
$$
\norm{u_0-b_0}{H^r} \leq \rho
$$ 
and Cauchy-Schwartz in the second inequality. The exact same argument, with obvious modifications, leads to
$$
\left| \int_{\R^3} \left( e^{\eta t\Delta}\partial^\beta(b_0-u_0)\cdot\nabla\right)e^{\eta t \Delta}\partial^{\alpha-\beta}b_0\cdot\partial^\alpha \pob{v} \,\d x \right| \leq  \epsilon\norm{\nabla\partial^\alpha \pob{v}}{L^2}^2 + C\rho^2\norm{e^{\eta t\Delta}\partial^{\alpha - \beta} b_0}{L^\infty}^2,
$$
$$
\left| \int_{\R^3} \left( e^{\eta t \Delta}\partial^\beta(b_0-u_0)\cdot\nabla\right)e^{\eta t \Delta}\partial^{\alpha-\beta}b_0\cdot \partial^\alpha \pob{h}\,\d x \right| \leq  \epsilon\norm{\nabla\partial^\alpha \pob{h}}{L^2}^2 + C\rho^2\norm{e^{\eta t\Delta}\partial^{\alpha - \beta} b_0}{L^\infty}^2,
$$
$$
\left| \int_{\R^3} \left(e^{\eta t \Delta}\partial^\beta b_0\cdot\nabla\right)e^{\eta t\Delta}\partial^{\alpha-\beta}(u_0-b_0)\cdot \partial^\alpha \pob{h}\,\d x \right| \leq \epsilon\norm{\nabla\partial^\alpha \pob{h}}{L^2}^2 + C\rho^2\norm{e^{\eta t\Delta}\partial^{\beta} b_0}{L^\infty}^2
$$

    \medskip
    
	The integrals in $I_3$ are no more linear in $v$ and $h$. At this point, it is crucial to observe the following: when all the derivatives hit the nonlinear terms, there appear no extra derivatives hitting the terms involving the evolution under the heat flow of the initial data. This is essential for the method to work, because this term will appear in the exponential 
	after using the Grönwall lemma, and we must avoid exponential growth if it appears the factor $N$ (see Remark \ref{rem:proof_induction}). 

We split the first integral in $I_3$ as 
\begin{align*}
&- \int_{\R^3} \left(e^{\eta t \Delta}\partial^{\beta}u_0\cdot\nabla\right)\partial^{\alpha-\beta}v\cdot\partial^\alpha\pob{v}
\\
&= - \int_{\R^3} \left(e^{\eta t \Delta}\partial^{\beta}u_0\cdot\nabla\right)\partial^{\alpha-\beta}\pib{v}\cdot\partial^\alpha\pob{v}
- \int_{\R^3} \left(e^{\eta t \Delta}\partial^{\beta}u_0\cdot\nabla\right)\partial^{\alpha-\beta}\pob{v}\cdot\partial^\alpha\pob{v}\,
\end{align*}
and we estimate at low frequency
\begin{align*}
\left| \int_{\R^3} \left(e^{\eta t \Delta}\partial^{\beta}u_0\cdot\nabla\right)\partial^{\alpha-\beta}\pib{v}\cdot\partial^\alpha\pob{v}\,\d x \right| & \leq  \norm{\nabla\partial^\alpha \pob{v}}{L^2} \norm{e^{\eta t \Delta}\partial^\beta u_0}{L^\infty}
\| \partial^{\alpha - \beta} \pib{v}\|_{L^2}
\\ 
& \leq  \norm{\nabla\partial^\alpha \pob{v}}{L^2} \norm{e^{\eta t \Delta}\partial^\beta u_0}{L^\infty}
\| v\|_{L^2}
\\
& \leq  \frac{\epsilon}{2} \norm{\nabla\partial^\alpha \pob{v}}{L^2}^2 + \rho^2 \norm{e^{\eta t \Delta}\partial^\beta u_0}{L^\infty}^2
\end{align*}
where we have integrated by parts the nabla to justify the first inequality, we have used \eqref{BPIN2} in the second inequality
and $$\|v(t)\|^2_{L^2} \leq e_0(t)\lesssim\rho^2$$ and Cauchy-Schwartz in the last inequality.

At  high frequencies we estimate
\begin{align*}
\left| \int_{\R^3} \left(e^{\eta t \Delta}\partial^{\beta}u_0\cdot\nabla\right)\partial^{\alpha-\beta}\pob{v}\cdot\partial^\alpha\pob{v}\,\d x \right| & \leq  \norm{\nabla\partial^\alpha \pob{v}}{L^2} \norm{e^{\eta t \Delta}\partial^\beta u_0}{L^\infty}
\| \partial^{\alpha - \beta} \pob{v}\|_{L^2}
\\
&\leq \frac{\epsilon}{2} \norm{\nabla\partial^\alpha \pob{v}}{L^2}^2 + C\norm{e^{\eta t \Delta}\partial^\beta u_0}{L^\infty}^2\pob{e}_{\abs{\alpha-\beta}}(t)
\end{align*}
In conclusion 
$$
\left| \int_{\R^3} \left(e^{\eta t \Delta}\partial^{\beta}u_0\cdot\nabla\right)\partial^{\alpha-\beta}v\cdot\partial^\alpha\pob{v} \right|
\leq \epsilon\norm{\nabla\partial^\alpha \pob{v}}{L^2}^2 + C\norm{e^{\eta t \Delta}\partial^\beta u_0}{L^\infty}^2
(\rho^2 + \pob{e}_{\abs{\alpha-\beta}}(t))
$$
and we note that the induction hypothesis \eqref{eq:induction_hypothesis} gives (recall $N \geq 1$)
$$
\left| \int_{\R^3} \left(e^{\eta t \Delta}\partial^{\beta}u_0\cdot\nabla\right)\partial^{\alpha-\beta}v\cdot\partial^\alpha\pob{v} \right|
\leq \epsilon\norm{\nabla\partial^\alpha \pob{v}}{L^2}^2 + C\norm{e^{\eta t \Delta}\partial^\beta u_0}{L^\infty}^2
\rho^2N^{2|\alpha - \beta|}
$$
as long as $|\beta| \neq 0$ or $|\alpha| < r$, While in the remaining case 
$|\beta| = 0$ and $|\alpha| = r$ we have
 $$
\left| \int_{\R^3} \left(e^{\eta t \Delta} u_0\cdot\nabla\right)\partial^{\alpha}v\cdot\partial^\alpha\pob{v} \right|
\leq \epsilon\norm{\nabla\partial^\alpha \pob{v}}{L^2}^2 + C\norm{e^{\eta t \Delta} u_0}{L^\infty}^2
(\rho^2 + \pob{e}_r(t)), \qquad |\alpha| = r.
$$
The exact same argument applies, with obvious modifications, to the second, third and fourth integrals of $I_3$. We thus get,
for $|\beta| \neq 0$ or $|\alpha| < r$:
$$
\left| \int_{\R^3} \left(e^{\eta t \Delta}\partial^{\beta}u_0\cdot\nabla\right)\partial^{\alpha-\beta}h\cdot\partial^\alpha\pob{h}  \right|
\leq 
\epsilon\norm{\nabla\partial^\alpha \pob{h}}{L^2}^2 + C\norm{e^{\eta t \Delta}\partial^\beta u_0}{L^\infty}^2
\rho^2N^{2|\alpha - \beta|}
$$
$$
\left| \int_{\R^3} \left(e^{\eta t \Delta}\partial^\beta b_0\cdot\nabla\right)\partial^{\alpha-\beta}h\cdot \partial^\alpha \pob{v} \right|
\leq \epsilon\norm{\nabla\partial^\alpha \pob{v}}{L^2}^2 + C\norm{e^{\eta t \Delta}\partial^\beta b_0}{L^\infty}^2
\rho^2N^{2|\alpha - \beta|} 
$$
$$
\left| \int_{\R^3} \left(e^{\eta t \Delta}\partial^\beta b_0\cdot\nabla\right)\partial^{\alpha-\beta}v\cdot \partial^\alpha \pob{h} \right|
\leq \epsilon\norm{\nabla\partial^\alpha \pob{h}}{L^2}^2 + C\norm{e^{\eta t \Delta}\partial^\beta b_0}{L^\infty}^2
\rho^2N^{2|\alpha - \beta|} 
$$
while in the case $|\beta| = 0$ and $|\alpha| = r$:
$$
\left| \int_{\R^3} \left(e^{\eta t \Delta}u_0\cdot\nabla\right)\partial^{\alpha}h\cdot\partial^\alpha\pob{h}  \right|
\leq 
\epsilon\norm{\nabla\partial^\alpha \pob{h}}{L^2}^2 + C\norm{e^{\eta t \Delta} u_0}{L^\infty}^2
(\rho^2 + \pob{e}_r(t))
$$
$$
\left| \int_{\R^3} \left(e^{\eta t \Delta} b_0\cdot\nabla\right)\partial^{\alpha}h\cdot \partial^\alpha \pob{v} \right|
\leq \epsilon\norm{\nabla\partial^\alpha \pob{v}}{L^2}^2 + C\norm{e^{\eta t \Delta} b_0}{L^\infty}^2
(\rho^2 + \pob{e}_r(t))
$$
$$
\left| \int_{\R^3} \left(e^{\eta t \Delta} b_0\cdot\nabla\right)\partial^{\alpha}v\cdot \partial^\alpha \pob{h} \right|
\leq \epsilon\norm{\nabla\partial^\alpha \pob{h}}{L^2}^2 + C\norm{e^{\eta t \Delta} b_0}{L^\infty}^2
(\rho^2 + \pob{e}_r(t))
$$

We now move to the fifth integral of $I_3$, that we split as
 \begin{align*}
&
\int_{\R^3} \left(\partial^\beta v\cdot\nabla\right)e^{\eta t \Delta}\partial^{\alpha-\beta}u_0\cdot \partial^\alpha \pob{v}
\\
&= \int_{\R^3} \left(\partial^\beta \pib{v}\cdot\nabla\right)e^{\eta t \Delta}\partial^{\alpha-\beta}u_0\cdot \partial^\alpha \pob{v}
+
\int_{\R^3} \left(\partial^\beta \pob{v}\cdot\nabla\right)e^{\eta t \Delta}\partial^{\alpha-\beta}u_0\cdot \partial^\alpha \pob{v}\,
\end{align*}
and we estimate at low frequency
\begin{align*}
\left| \int_{\R^3} \left(\partial^\beta \pib{v}\cdot\nabla\right)e^{\eta t \Delta}\partial^{\alpha-\beta}u_0\cdot \partial^\alpha \pob{v} \right| & \leq  \norm{\nabla\partial^\alpha \pob{v}}{L^2} \norm{e^{\eta t \Delta}\partial^{\alpha - \beta} u_0}{L^\infty}
\| \partial^{\beta} \pib{v}\|_{L^2}
\\ 
& \leq  \norm{\nabla\partial^\alpha \pob{v}}{L^2} \norm{e^{\eta t \Delta}\partial^{\alpha - \beta} u_0}{L^\infty}
\| v\|_{L^2}
\\
& \leq  \frac{\epsilon}{2} \norm{\nabla\partial^\alpha \pob{v}}{L^2}^2 
+ \rho^2 \norm{e^{\eta t \Delta}\partial^{\alpha - \beta} u_0}{L^\infty}^2
\end{align*}
where we have integrated by parts the nabla to justify the first inequality, we have used \eqref{BPIN2} in the second inequality
and $$\|v(t)\|^2_{L^2} \leq e_0(t)\lesssim\rho^2$$ and Cauchy-Schwartz in the last inequality.

At  high frequencies we estimate
\begin{align*}
\left| \int_{\R^3} \left(\partial^\beta \pob{v}\cdot\nabla\right)e^{\eta t \Delta}\partial^{\alpha-\beta}u_0\cdot \partial^\alpha \pob{v} \right| & \leq  \norm{\nabla\partial^\alpha \pob{v}}{L^2} \norm{e^{\eta t \Delta}\partial^{\alpha - \beta} u_0}{L^\infty}
\| \partial^{\beta} \pob{v}\|_{L^2}
\\
&\leq \frac{\epsilon}{2} \norm{\nabla\partial^\alpha \pob{v}}{L^2}^2 
+ C\norm{e^{\eta t \Delta}\partial^{\alpha -\beta} u_0}{L^\infty}^2 \pob{e}_{\abs{\beta}}(t)
\end{align*}
In conclusion 
$$
\left| 
\int_{\R^3} \left(\partial^\beta \pob{v}\cdot\nabla\right)e^{\eta t \Delta}\partial^{\alpha-\beta}u_0\cdot \partial^\alpha \pob{v} \right|
\leq \epsilon\norm{\nabla\partial^\alpha \pob{v}}{L^2}^2 + C\norm{e^{\eta t \Delta}\partial^{\alpha - \beta} u_0}{L^\infty}^2
(\rho^2 + \pob{e}_{\abs{\beta}}(t))
$$
and we note that the induction hypothesis \eqref{eq:induction_hypothesis} gives (recall $N \geq 1$)
$$
\left| \int_{\R^3} \left(\partial^\beta \pob{v}\cdot\nabla\right)e^{\eta t \Delta}\partial^{\alpha-\beta}u_0\cdot \partial^\alpha \pob{v} \right|
\leq \epsilon\norm{\nabla\partial^\alpha \pob{v}}{L^2}^2 + C\norm{e^{\eta t \Delta}\partial^{\alpha - \beta} u_0}{L^\infty}^2
\rho^2N^{2|\beta|}
$$
as long as $\beta \neq \alpha$ or $|\alpha| < r$, while in the remaining case 
$\beta = \alpha$ and $|\alpha| = r$ we have
 $$
\left| \int_{\R^3} \left(\partial^\alpha \pob{v}\cdot\nabla\right)e^{\eta t \Delta}u_0\cdot \partial^\alpha \pob{v} \right|
\leq \epsilon\norm{\nabla\partial^\alpha \pob{v}}{L^2}^2 + C\norm{e^{\eta t \Delta} u_0}{L^\infty}^2
(\rho^2 + \pob{e}_r(t)), \qquad |\alpha| = r.
$$ 

The exact same argument applies, with obvious modifications, to the sixth, seventh, eighth integrals of $I_3$. We thus get,
for $\beta \neq \alpha$ or $|\alpha| < r$:
$$
\left| \int_{\R^3} \left(\partial^\beta h\cdot\nabla\right)e^{\eta t \Delta}\partial^{\alpha-\beta}u_0\cdot \partial^\alpha \pob{h} \right|
\leq \epsilon\norm{\nabla\partial^\alpha \pob{h}}{L^2}^2 + C\norm{e^{\eta t \Delta}\partial^{\alpha - \beta} u_0}{L^\infty}^2
\rho^2N^{2|\beta|}
$$
$$
\left| \int_{\R^3}\left(\partial^\beta h\cdot\nabla\right)e^{\eta t \Delta}\partial^{\alpha-\beta}b_0\cdot \partial^\alpha \pob{v} \right|
\leq \epsilon\norm{\nabla\partial^\alpha \pob{v}}{L^2}^2 + C\norm{e^{\eta t \Delta}\partial^{\alpha - \beta} b_0}{L^\infty}^2
\rho^2N^{2|\beta|}
$$
$$
\left| \int_{\R^3} \left(\partial^\beta v\cdot\nabla\right)e^{\eta t \Delta}\partial^{\alpha-\beta}b_0\cdot \partial^\alpha \pob{h}\right|
\leq \epsilon\norm{\nabla\partial^\alpha \pob{h}}{L^2}^2 + C\norm{e^{\eta t \Delta}\partial^{\alpha - \beta} b_0}{L^\infty}^2
\rho^2N^{2|\beta|},
$$
while for
 $\beta = \alpha$ and $|\alpha| = r$:
$$
\left| \int_{\R^3} \left(\partial^\alpha h\cdot\nabla\right)e^{\eta t \Delta}u_0\cdot \partial^\alpha \pob{h} \right|
\leq \epsilon\norm{\nabla\partial^\alpha \pob{h}}{L^2}^2 +  C\norm{e^{\eta t \Delta} u_0}{L^\infty}^2
(\rho^2 + \pob{e}_r(t))
$$
$$
\left| \int_{\R^3}\left(\partial^\alpha h\cdot\nabla\right)e^{\eta t \Delta}b_0\cdot \partial^\alpha \pob{v} \right|
\leq \epsilon\norm{\nabla\partial^\alpha \pob{v}}{L^2}^2 +  C\norm{e^{\eta t \Delta} b_0}{L^\infty}^2
(\rho^2 + \pob{e}_r(t))
$$
$$
\left| \int_{\R^3} \left(\partial^\alpha v\cdot\nabla\right)e^{\eta t \Delta}b_0\cdot \partial^\alpha \pob{h}\right|
\leq \epsilon\norm{\nabla\partial^\alpha \pob{h}}{L^2}^2 +  C\norm{e^{\eta t \Delta} b_0}{L^\infty}^2
(\rho^2 + \pob{e}_r(t))
$$
Putting all the estimates together, summing over $\abs{\alpha}\leq r$ we arrive to
\begin{equation*}
		\begin{split}
			& \frac{\d}{\d t}\pob{e}_r(t) + 2 \eta \sum_{\abs{\alpha}\leq r} \int_{\R^3} \Bigg(|\nabla\partial^\alpha \pob{v}|^2 + \abs{\nabla\partial^\alpha \pob{h}}^2\Bigg) \\
			&\leq 
			C_1 \varepsilon  \sum_{\abs{\alpha}\leq r} \int_{\R^3} \Bigg(|\nabla\partial^\alpha \pob{v}|^2 + \abs{\nabla\partial^\alpha \pob{h}}^2\Bigg) 
			\\ &
			+ C_2 \left(
			 \rho^4 + \rho^2\pob{e}_r(t) + \pob{e}_r(t)^3 + \left(\norm{e^{\eta t \Delta}u_0}{L^\infty}^2 
			+ \norm{e^{\eta t \Delta}b_0}{L^\infty}^2\right)\pob{e}_r(t) \right) \\
			&\quad+ C_2 \rho^2 \sum_{m=0}^rN^{2(r-m)}\left(\norm{e^{\eta t \Delta}\nabla^m u_0}{L^\infty}^2 + \norm{e^{\eta t \Delta}\nabla^m b_0}{L^\infty}^2\right), \\
		\end{split}
	\end{equation*}
where the constant $C_1$ is $r$-dependent, while the constant $C_2$ depends on $r$ and $\varepsilon >0$. 
Choosing $$\varepsilon = \frac{\eta}{C_1}$$ the estimate becomes 	
\begin{equation}\label{eq:sum.alpha.all.estim.e_r}
		\begin{split}
			\frac{\d}{\d t}\pob{e}_r(t) & \leq - \eta \sum_{\abs{\alpha}\leq r} \int_{\R^3} \Bigg(|\nabla\partial^\alpha \pob{v}|^2 + \abs{\nabla\partial^\alpha \pob{h}}^2\Bigg) \\
			&+ C_2 \rho^4 + \rho^2\pob{e}_r(t) + C_2 \pob{e}_r(t)^3 + C_2 \left(\norm{e^{\eta t \Delta}u_0}{L^\infty}^2 
			+  \norm{e^{\eta t \Delta}b_0}{L^\infty}^2\right)\pob{e}_r(t) \\
			&+ C_2 \rho^2 \sum_{m=0}^rN^{2(r-m)}\left(\norm{e^{\eta t \Delta}\nabla^m u_0}{L^\infty}^2 
			+ \norm{e^{\eta t \Delta}\nabla^m b_0}{L^\infty}^2\right). \\
		\end{split}
	\end{equation}
	The advantage of the high frequencies regime on $\R^3$ is that at this point we can exploit Poincare's inequality 
	(see \eqref{BPIN}):
	$$\sum_{\abs{\alpha}\leq r} \int_{\R^3} \left(\abs{\nabla\partial^\alpha \pob{v}}^2 + \abs{\nabla\partial^\alpha \pob{h}}^2\right)\d x \geq \frac12 \sum_{\abs{\alpha}\leq r} \int_{\R^3} \left(\abs{\partial^\alpha \pob{v}}^2 + \abs{\partial^\alpha \pob{h}}^2\right)\d x = \frac12 \pob{e}_r(t)$$
	to deduce
	$$- \eta \sum_{\abs{\alpha}\leq r} \int_{\R^3} \left(\abs{\nabla\partial^\alpha \pob{v}}^2 + \abs{\nabla\partial^\alpha \pob{h}}^2\right) \leq - \frac12 \eta \pob{e}_r(t).$$
	Plugging this bound into \eqref{eq:sum.alpha.all.estim.e_r} we arrive to
	\begin{equation}\label{eq:sum.alpha.all.estim.e_r2}
		\begin{split}
			\frac{\d}{\d t}\pob{e}_r(t) & \leq - \frac12 \eta \pob{e}_r(t) \\
			&+ C_2 \rho^4 + C_2\rho^2\pob{e}_r(t) + C_2 \pob{e}_r(t)^3 + C_2 \left(\norm{e^{\eta t \Delta}u_0}{L^\infty}^2 
			+  \norm{e^{\eta t \Delta}b_0}{L^\infty}^2\right)\pob{e}_r(t) \\
			&+ C_2 \rho^2 \sum_{m=0}^rN^{2(r-m)}\left(\norm{e^{\eta t \Delta}\nabla^m u_0}{L^\infty}^2 
			+ C_2 \norm{e^{\eta t \Delta}\nabla^m b_0}{L^\infty}^2\right). \\
		\end{split}
	\end{equation}
	
	Let $c >0$ a (small) quantity that will be fixed later. As $\pob{e}_r(0)=0$, by continuity in time of 
	the (strong) solution it make sense to define the maximal time 
	$T$ such that one has $\pob{e}_r(t) \leq c$ for all $t \in [0, T]$. If $T \neq + \infty$, then a continuity argument shows
	$\pob{e}_r(T) = c$ (we will in fact prove that $T= + \infty$). 
	Recalling that $\rho \leq cN^{-r-1}$ and $N \geq 1$, we see that 
 we can choose  $c>0$ sufficiently small in such a way that, for all $t \in [0, T]$ we have:
	\begin{equation}\label{eq:assumption_sigma_er}
		- \frac12 \eta \pob{e}_r(t)+C_2 \pob{e}_r(t)^3+C_2 \rho^2\pob{e}_r(t) \leq - \frac14 \eta \pob{e}_r(t).
	\end{equation}
	Plugging this inequality into \eqref{eq:sum.alpha.all.estim.e_r2} we see that for all $t \in [0, T]$: 
	\begin{equation}
		\begin{split}
			\frac{\d}{\d t}\pob{e}_r(t) & \leq - \frac14 \eta \pob{e}_r(t) 
			   + C_2 \left(\norm{e^{\eta t \Delta}u_0}{L^\infty}^2 
			+  \norm{e^{\eta t \Delta}b_0}{L^\infty}^2\right)\pob{e}_r(t) \\
			&+ C_2 \rho^4 + C_2 \rho^2 \sum_{m=0}^rN^{2(r-m)}\left(\norm{e^{\eta t \Delta}\nabla^m u_0}{L^\infty}^2 
			+  \norm{e^{\eta t \Delta}\nabla^m b_0}{L^\infty}^2\right). \\
		\end{split}
	\end{equation}
	
	Applying Grönwall's inequality, we obtain for all $t \in [0, T]$:
	\begin{equation}\label{eq:last_ineq_pob_er}
		\pob{e}_r(t) \lesssim  (\rho^4+\rho^2N^{2r}) \int_0^t e^{\int_s^t \left(-\frac14 \eta + \left(\norm{e^{\tau \eta \Delta}u_0}{L^\infty}^2 + \norm{e^{\tau \eta \Delta}b_0}{L^\infty}^2\right)\right)\d\tau}\d s \lesssim \rho^2N^{2r}.
	\end{equation}
	where the hypothesis \eqref{eq:hypothesis_data_for_global_existence} has been used to bound the 
	$L^2W^{m,\infty}$ norm of the linear propagators that came from the induction terms, and its $L^2L^\infty$ norm in the exponential. 
	In conclusion we have proved that  for all $t \in [0, T]$:
	$$
		\pob{e}_r(t) \leq C \rho^2N^{2r}.
	$$
	Since $N \geq 1$ and $\rho \leq cN^{-r-1}$, we see that, choosing  $c$ even smaller (if needed), we have indeed
	 $C \rho^2 N^{2r} \leq Cc^2N^{-2} \leq \frac12 c.$
	 Thus $\pob{e}_r(T) \leq \frac12 c$, that is in contradiction with~$\pob{e}_r(T) = c$, that we have observed to be the case if 
	 $T < +\infty$. This proves that $T = + \infty$ and that inequality \eqref{eq:last_ineq_pob_er} can be thus extended 
	 to all times.

Then we notice that, by inequality \eqref{BPIN2} we have
	$$\pib{e}_r(t)\leq\pib{e}_0(t)\leq C\rho^2.$$
	Thus
	$$e_r(t)=\pib{e}_r(t)+\pob{e}_r(t)\leq C\rho^2N^{2r}, \quad\forall t\geq0,$$
	that is indeed the desired inequality \eqref{eq:induction_hypothesis}.
	This concludes the proof.
\end{proof}

\begin{remark}\label{rem:proof_induction}
	Three facts may be pointed out:
	\begin{enumerate}
		\item In all the estimates that, like \eqref{eq:Gronwall_exp_r0}, will give an  exponential part when applying Grönwall's inequality, and it is crucial for the magnetic reconnection result that no derivatives, and thus no powers of $N$ appear in the exponent, because this exponential growth cannot be killed by the parameter $\rho$ at the end of the argument (see the choice of the parameters at the end of the proof of Theorem \ref{thm:large_magnetic_reconnection}).
		
		\item The $r=0$ estimate does not deppend on $N$, which is also important when carrying out the Poincaré argument in order to kill the high powers of the energy with the linear one in \eqref{eq:assumption_sigma_er}.
		
		\item The projection inside of the ball is comparable with the $L^2$-case always, and this is needed for the last step of the proof. If not, we should have carried another energy-type argument for the derivative of $\pib{e}_r(t)$ and in this case we would not have Poincaré's inequality to obtain decay from Grönwall.
	\end{enumerate}
\end{remark}

\section{Magnetic reconnection for large data}
\noindent
The other main result of this article provides an analytic example of a class of large initial data in $\Dot{B}_{\infty,\infty}^{-1}$ (the largest critical space for \eqref{eq:MHD}) such that magnetic reconnection happens. This is something that has never been done before; in other works concerning this problem (see \cite{CL24}) analytic examples for reconnection have been built but for small initial data, to guarantee the existence of the solution for all times. In contrast, here we are going to impose the smallness condition not on the initial data $(u_0,b_0)$ itself but on the difference $u_0-b_0$, and exploit the symmetries of the system of equations. As in \cite{CL24}, we use as a topological constraint the number of hyperbolic critical points of the field at different times. One can consult \cite{PP22,PT00} to get some insight into magnetic reconnection and the topological changes of the field lines at null points during this process.
\begin{remark}
	For this result it is interesting to keep track of the resistivity to understand better the timescale for the reconnection. However, as pointed out in Remark \ref{rem:resistivity_equal_viscosity}, in the proof of this result it is crucial to use some cancellations of large terms that appear thanks to the symmetries of the system. In order to maintain them, we must set the viscosity equal to the resistivity, $\nu=\eta$. We believe that if $\nu=\eta+\epsilon$ for $0<\epsilon\ll1$, a similar method should work to prove both Theorem \ref{thm:global_existence_large_data} and Theorem \ref{thm:large_magnetic_reconnection}. It is left as an open problem to extend this result to the case where $\nu\neq\eta$.
\end{remark}

\subsection{Construction of the initial data}
Define the initial data
\begin{equation}\label{eq:def_large_initial_data}
	u_0:=M\text{curl}(\phi B_N) \quad \text{and} \quad b_0:=M\curl{\phi B_N}+\rho e^{-\eta T\Delta}\curl{\psi W}
\end{equation}
making the construction with the following in mind: our aim is to localize the Beltrami field in order to obtain finite energy solutions, preserving the incompressibility condition by putting the curl in front of the localization. To choose the parameters, define
\begin{equation}\label{eq:def_b01}
	\phi(x):=(1+\abs{x}^2)^{-\alpha} \quad\text{and}\quad B_N(x):=(\sin (Nx_3), \cos (Nx_3), 0),
\end{equation}
for some $N\gg\alpha\geq3/2$, while
\begin{equation}\label{eq:def_b02}
	\psi(x):=e^{-\frac{\abs{x}^2}{8\eta T}} \quad\text{and}\quad W(x):=(\cos x_2,\cos x_3, \cos x_1).
\end{equation}
With these definitions, the global existence of a (unique) strong solution $(u,b)$ to \eqref{eq:MHD} starting at $(u_0,b_0)\in H^r(\R^3)$ is ensured by means of Theorem \ref{thm:global_existence_large_data}. To check this, we need some technical lemmas:
\begin{lem}\label{lem:FT.params}
    Let $\delta$ denote the delta distribution centered at zero.
    \begin{itemize}
        \item The Fourier transforms of $B_N$ is given by
        \begin{equation}\label{eq:FT.B_N}
            \widehat{B_N}(\xi)=\sum_{l=1}^4\widehat{B_N^l}(\xi),
        \end{equation}
        where
    	\begin{equation*}
    		\begin{aligned}
    			\widehat{B_N^1}(\xi):=\frac{1}{2\i}(\delta(\xi_1)\otimes\delta(\xi_2)\otimes\delta(\xi_3-N),0,0), & \quad \widehat{B_N^2}(\xi):=-\frac{1}{2\i}(\delta(\xi_1)\otimes\delta(\xi_2)\otimes\delta(\xi_3+N),0,0), \\
    			\widehat{B_N^3}(\xi):=\frac{1}{2}(0,\delta(\xi_1)\otimes\delta(\xi_2)\otimes\delta(\xi_3-N),0), &\quad \widehat{B_N^4}(\xi):=\frac{1}{2}(0,\delta(\xi_1)\otimes\delta(\xi_2)\otimes\delta(\xi_3+N),0),
    		\end{aligned}
    	\end{equation*}
this follows once one express the sine and cosine in terms of exponential and using the identity 
$\widehat{\delta(\cdot + a)}(\xi) = e^{-i\xi a}$; see for instance the computation for $(B_N)_1$ below.
        \item The Fourier transform of $W$ is given by
        \begin{equation*}
            2\widehat{W}(\xi) = \sum_{j=1}^3 \widehat{W_j}(\xi)\,,
        \end{equation*}
        where
        \begin{equation*}
            \begin{split}
                \widehat{W_1}(\xi) &= \delta(\xi_1) \otimes \big( \delta(\xi_2-1) + \delta(\xi_2+1) \big) \otimes \delta(\xi_3) \,, \\
                \widehat{W_2}(\xi) &= \delta(\xi_1) \otimes \delta(\xi_2) \otimes \big( \delta(\xi_3-1) + \delta(\xi_3+1) \big) \,, \\
                \widehat{W_3}(\xi) &= \big( \delta(\xi_1-1) + \delta(\xi_1+1) \big) \otimes \delta(\xi_2) \otimes \delta(\xi_3) \,.
            \end{split}
        \end{equation*}

        \item The Fourier transform of $\psi$ is given by
        \begin{equation*}
            \widehat{\psi}(\xi) = 8(2\eta T)^{3/2}e^{-2\eta T |\xi|^2}\,.
        \end{equation*}
    \end{itemize}
\end{lem}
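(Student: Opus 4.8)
The plan is to treat the three Fourier transforms separately; each is an elementary computation carried out in $\mathscr{S}'(\R^3)$, the only structural remark being that $B_N$ and $W$ are bounded but do not decay, so they are merely tempered distributions and their transforms are finitely supported combinations of Dirac masses. The engine for the first two items is a single observation: applying Fourier inversion to the cited identity $\widehat{\delta(\,\cdot\,+a)}(\xi)=e^{-\i\xi a}$ shows that, for each $j\in\{1,2,3\}$ and $k\in\R$, the Fourier transform on $\R^3$ of the pure exponential $x\mapsto e^{\i k x_j}$ is the Dirac mass $\delta(\xi_j-k)$ tensored with $\delta$ at the origin in the other two variables; the tensor structure merely records that $e^{\i k x_j}$ is constant in the variables $x_l$ with $l\neq j$.

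For $B_N=(\sin(Nx_3),\cos(Nx_3),0)$ I would first invoke Euler's formula, writing $(B_N)_1=\tfrac{1}{2\i}(e^{\i N x_3}-e^{-\i N x_3})$, $(B_N)_2=\tfrac12(e^{\i N x_3}+e^{-\i N x_3})$ and $(B_N)_3=0$. Taking the Fourier transform of each component and applying the observation above with $j=3$ and $k=\pm N$ then produces exactly the four Dirac contributions $\widehat{B_N^1},\dots,\widehat{B_N^4}$ of the statement, the prefactors $\pm\tfrac{1}{2\i}$ coming from the first component and $\tfrac12$ from the second, while the third component contributes nothing; the computation for $(B_N)_1$ is the model case displayed in the paper. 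The field $W=(\cos x_2,\cos x_3,\cos x_1)$ is handled in exactly the same way: each component is $\tfrac12(e^{\i x_m}+e^{-\i x_m})$ for the appropriate index $m$, and applying the observation in each slot yields $\widehat{W_1},\widehat{W_2},\widehat{W_3}$, every one of them carrying a factor $\tfrac12$, which is precisely why the left-hand side is written as $2\widehat W$.

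Finally, for the Gaussian $\psi(x)=e^{-|x|^2/(8\eta T)}=\prod_{j=1}^3 e^{-x_j^2/(8\eta T)}$ I would use this product structure, so that $\widehat\psi$ is the tensor product of three one-dimensional Gaussian transforms. Completing the square in the standard integral $\int_{\R}e^{-ax^2}e^{-\i x\xi}\,\d x=\sqrt{\pi/a}\,e^{-\xi^2/(4a)}$ with $a=(8\eta T)^{-1}$ gives the factor $e^{-2\eta T\xi_j^2}$ in each variable, hence $e^{-2\eta T|\xi|^2}$ after multiplying the three of them; the scalar prefactor is then read off once the normalization of the Fourier transform is fixed consistently with the identity $\widehat{\delta(\,\cdot\,+a)}(\xi)=e^{-\i\xi a}$ used above. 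I do not expect any genuine obstacle: the whole matter is bookkeeping — keeping all objects in $\mathscr{S}'(\R^3)$ so the Dirac masses are legitimate, tracking in which variable each $\delta$ is centered, and pinning down the Fourier normalization so that the numerical constants coincide with those in the statement.
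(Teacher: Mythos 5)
Your proposal is correct and follows essentially the same route as the paper: Euler's formula reduces each component of $B_N$ and $W$ to pure exponentials whose transforms are tensor products of Dirac masses (the paper gets this via Fubini and $\hat 1=\delta$, you via inversion of $\widehat{\delta(\cdot+a)}(\xi)=e^{-\i\xi a}$, which is the same computation), and the Gaussian is handled by completing the square factor by factor. The one place where you are slightly vaguer than necessary is the scalar prefactor of $\widehat\psi$: the standard formula you quote yields $\sqrt{8\pi\eta T}$ per variable, i.e.\ $(8\pi\eta T)^{3/2}$ in total, which agrees with the paper's own displayed computation (up to a typo in its last line) rather than with the constant $8(2\eta T)^{3/2}$ written in the statement, so it is worth pinning that factor down explicitly.
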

\begin{proof}
    For $B_N$, we state the proof given in \cite[Proposition 4.3]{CL24} for the sake of completeness. We rewrite the first component as
    \begin{equation*}
        (B_N)_1 = \sin(Nx_3) = \frac{e^{\i Nx_3} - e^{\i Nx_3}}{2\i}.
    \end{equation*}
    Then, compute
    \begin{equation*}
        \begin{split}
            \mathcal{F}[e^{\i Nx_3}](\xi) &= \int_{\R^3} e^{-\i x\cdot\xi} e^{\i Nx_3}\,\d x \\
            &= \bigg( \int_{\R} e^{-\i x_1\xi_1}\,\d x_1 \bigg) \bigg( \int_{\R} e^{-\i x_2\xi_2}\,\d x_2 \bigg) \bigg( \int_{\R} e^{-\i x_3(\xi_3-N)}\,\d x_1 \bigg) \\
            &= \delta(\xi_1) \otimes \delta(\xi_2) \otimes \delta(\xi_3-N).
        \end{split}
    \end{equation*}
    where we have used Fubini's theorem in the second equality and the fact that $\hat{1}=\delta$ in the third equality. The expression \eqref{eq:FT.B_N} follows by applying the same calculus to $e^{-\i Nx_3}$ and to $(B_N)_2 = \cos(Nx_3)$, and a similar computation works for $W$ (writing $1$ instead of $N$).

    \medskip

    Finally, for $\psi$, we compute
    \begin{equation*}
        \begin{split}
            \widehat{\psi}(\xi) = \int_{\R^3} e^{-\i x\cdot\xi}e^{-|x|^2/(8\eta T)}\,\d x &= \prod_{k=1}^3 \int_{\R} e^{-\i x_k\xi_k} e^{-x_k^2 / (8\eta T)}\,\d x_k \\
            &= \prod_{k=1}^3 \int_{\R} e^{-\frac{1}{8\eta T} \big( x_k + \i 4\eta T \xi_k \big)^2} e^{- 2\eta T \xi_k^2}\,\d x_k \\
            &= \prod_{k=1}^3 e^{- 2\eta T \xi_k^2} \int_{\R} e^{-\frac{y_k^2}{8\eta T}}\,\d y_k \quad \bigg( y_k := x_k+\i 4\eta T\xi_k \bigg) \\
            &= (8\pi\eta T)^{3/2} e^{-4\eta T |\xi|^2}.
        \end{split}
    \end{equation*}
\end{proof}

\begin{lem}\label{lem:norm_estimates_low_times}
	Let $t\geq0$. Then, for any $n\in\N$, $1\leq p\leq\infty$ and $r\in\N$,
	\begin{align}
		\norm{e^{\eta t\Delta}\curl{\phi B_N}}{L^p} &\lesssim_n e^{-c\eta tN^2}N + N^{-n} \label{eq:estim_norm_evolution_phiBN_Lp_small_times}, \\
		\norm{e^{\eta (t-T)\Delta}\curl{\psi W}}{H^r} &\leq C(T,\eta). \label{eq:estim_norm_evolution_psiW_Lp_small_times}
	\end{align}
\end{lem}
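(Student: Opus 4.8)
The plan is to prove both estimates by working on the Fourier side, using the explicit transforms from Lemma~\ref{lem:FT.params}: $\widehat{\phi B_N}$ is a finite linear combination of translates of $\hat\phi$ by $\pm Ne_3$, while $\widehat{\psi W}$ is a finite combination of translates of $\hat\psi$ by $\pm e_j$, where $\hat\phi$ decays faster than any polynomial (indeed exponentially: up to normalisation it equals $|\xi|^{\alpha-3/2}K_{\alpha-3/2}(|\xi|)$, and the modified Bessel function $K_\nu$ decays like $e^{-|\xi|}$) and $\hat\psi(\xi)=c(\eta T)^{3/2}e^{-2\eta T|\xi|^2}$ is a Gaussian of width $\propto\eta T$. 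In both estimates the curl merely contributes a factor $|\xi|$ through $\widehat{\curl(\cdot)}=i\xi\times\widehat{(\cdot)}$, which is harmless.

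For \eqref{eq:estim_norm_evolution_phiBN_Lp_small_times} I would fix a radial cut-off $\tilde\chi$ with $\tilde\chi\equiv1$ on $\{|\xi|\in[N/2,2N]\}$ and $\operatorname{supp}\tilde\chi\subset\{|\xi|\in[N/4,4N]\}$, and split $\curl(\phi B_N)=F_1+F_2$ with $\widehat{F_1}=\tilde\chi\,\widehat{\curl(\phi B_N)}$. The piece $F_1$ has Fourier support in the annulus $N\mathcal A$ with $\mathcal A:=\{1/4\le|\xi|\le4\}$, so Proposition~\ref{prop:heat.act.annulus} gives $\|e^{\eta t\Delta}F_1\|_{L^p}\lesssim e^{-c\eta tN^2}\|F_1\|_{L^p}$; since the Fourier projection $\tilde\chi(D)$ is bounded on every $L^p$, uniformly in $N$ (convolution with a fixed $L^1$ kernel rescaled), and $\|\curl(\phi B_N)\|_{L^p}\lesssim N\|\phi\|_{L^p}+\|\nabla\phi\|_{L^p}\lesssim_p N$ (using $\phi,\nabla\phi\in L^p$, which holds since we may take $\alpha$ as large as we wish), this contributes the term $e^{-c\eta tN^2}N$. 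For $F_2$ one notes that $\widehat{F_2}=(1-\tilde\chi)\,\widehat{\curl(\phi B_N)}$ is supported where $|\xi\mp Ne_3|\gtrsim\max(N,|\xi|)$; by the rapid decay of $\hat\phi$ and of its derivatives away from the origin, $\widehat{F_2}$ and all its $\xi$-derivatives have $L^1$ norm $\lesssim_n N^{-n}$, hence $\|\langle x\rangle^M F_2\|_{L^\infty}\lesssim_n N^{-n}$ for every $M$ and therefore $\|F_2\|_{L^p}\lesssim_n N^{-n}$ for all $p$; since $\|e^{\eta t\Delta}F_2\|_{L^p}\le\|F_2\|_{L^p}$ this contributes the term $N^{-n}$. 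Adding the two gives \eqref{eq:estim_norm_evolution_phiBN_Lp_small_times}.

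For \eqref{eq:estim_norm_evolution_psiW_Lp_small_times} I would simply observe that $e^{\eta(t-T)\Delta}=e^{\eta t\Delta}\circ e^{-\eta T\Delta}$ has Fourier symbol $e^{\eta(T-t)|\xi|^2}$, so
\[
\widehat{e^{\eta(t-T)\Delta}\curl(\psi W)}(\xi)=e^{\eta(T-t)|\xi|^2}\,\bigl(i\xi\times\widehat{\psi W}(\xi)\bigr).
\]
Because $\widehat{\psi W}$ is a finite sum of terms $\hat\psi(\xi\mp e_j)=c(\eta T)^{3/2}e^{-2\eta T|\xi\mp e_j|^2}$ and $2|\xi\mp e_j|^2\ge\tfrac32|\xi|^2-6$, we get $|\widehat{\psi W}(\xi)|\lesssim_{\eta,T}e^{-\frac32\eta T|\xi|^2}$, whence
\[
\bigl|\widehat{e^{\eta(t-T)\Delta}\curl(\psi W)}(\xi)\bigr|\lesssim_{\eta,T}|\xi|\,e^{-\eta(t+T/2)|\xi|^2}\le|\xi|\,e^{-\frac12\eta T|\xi|^2}\qquad(t\ge0),
\]
uniformly in $t$. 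Integrating $(1+|\xi|^2)^r$ times the square of this Gaussian profile over $\R^3$ produces a finite constant depending only on $\eta$, $T$ and the fixed $r$, which is the claim.

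The only genuinely delicate point is the bound $\|F_2\|_{L^p}\lesssim_n N^{-n}$ in \eqref{eq:estim_norm_evolution_phiBN_Lp_small_times}: one must show that the non-compactly-supported tail of $\widehat{\phi B_N}$ away from the frequency shell $|\xi|\simeq N$ is negligible, uniformly in $t$, and this is exactly where the rapid (exponential) decay of $\hat\phi$—a consequence of the analytic choice $\phi=(1+|x|^2)^{-\alpha}$—enters; the rest is a routine Fourier-side computation. In \eqref{eq:estim_norm_evolution_psiW_Lp_small_times} the one thing to watch is that the Gaussian width of $\hat\psi$ (proportional to $\eta T$) strictly dominates the backward-heat amplification $e^{\eta(T-t)|\xi|^2}$ present for $0\le t<T$, which is precisely why $\psi$ is taken to be that Gaussian and why the factor $e^{-\eta T\Delta}$ is built into the definition of $b_0$.
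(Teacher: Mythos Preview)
Your argument is correct, and for \eqref{eq:estim_norm_evolution_psiW_Lp_small_times} it is essentially the same computation as the paper's (a direct Fourier-side estimate exploiting that the Gaussian $\hat\psi$ has variance $2\eta T$, which beats the backward-heat amplification $e^{\eta(T-t)|\xi|^2}$).

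For \eqref{eq:estim_norm_evolution_phiBN_Lp_small_times}, however, the paper uses a slightly different and cleaner decomposition. Rather than cutting the full function $\curl(\phi B_N)$ in frequency and then controlling the tail $F_2$ via pointwise decay of $\hat\phi$ and its $\xi$-derivatives, the paper splits the \emph{scalar localiser} itself: $\phi=P_{\leq N/2}\phi+P_{>N/2}\phi$. The product $(P_{\leq N/2}\phi)B_N$ then has Fourier support in $(0,0,\pm N)+B(0,N/2)\subset N\mathcal A$ automatically, so Proposition~\ref{prop:heat.act.annulus} applies; and the remainder $(P_{>N/2}\phi)B_N$ is handled entirely in physical space by Bernstein, $\|P_{>N/2}\phi\|_{L^p}\lesssim_k N^{-k}\||D|^k\phi\|_{L^p}$, together with the identity $\curl(fB_N)=\nabla f\wedge B_N+NfB_N$. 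This avoids the Fourier-side bookkeeping you need for $\|\langle x\rangle^M F_2\|_{L^\infty}$ (Leibniz on $(1-\tilde\chi)\,i\xi\times\hat\phi(\cdot\mp Ne_3)$, volume of the transition annuli, etc.), and it requires only that $\phi\in W^{k,p}$ for all $k$---the Bessel/exponential structure of $\hat\phi$ that you invoke is correct but not needed. Your route buys nothing extra here, but it is a valid alternative.
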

\begin{proof}
	Estimate \eqref{eq:estim_norm_evolution_phiBN_Lp_small_times} is proved by using Bernstein inequalities. Observe that
	$$\phi=P_{\leq N/2}\phi + P_{>N/2}\phi,$$
	meaning that we project (in the Fourier side) inside and outside of the ball $B(0,N/2)$, respectively (following the notations in Proposition \ref{prop:Bernstein_inequalities}). For the part inside of the ball, as $\widehat{B_N^1}$ is supported in the point $(0,0,N)$ ((by Lemma \ref{lem:FT.params}; the same for $\widehat{B_N^3}$ and in $(0,0,-N)$ for $\widehat{B_N^2}$ and $\widehat{B_N^4}$), then
	$$\text{supp}\mathcal{F}[\Phi_N] \subseteq (0,0,N)+B(0,N/2) \subseteq N\mathcal{A}, \quad \Phi_N:=(P_{\leq N/2}\phi)B_N^1 \;\text{ and }\; \mathcal{A}:=\left\{\frac{1}{2}\leq\abs{\xi}\leq\frac{3}{2}\right\}.$$
	Thus, by Propositions \ref{prop:Bernstein_inequalities} and \ref{prop:heat.act.annulus}, together with Hölder's inequality and the facts that $\curl{g B_N}=\nabla g\wedge B_N + g\curl{B_N}=\nabla g\wedge B_N + NgB_N$ (for $g=P_{\leq N/2}\phi$) and $\norm{B_N}{L^{\infty}}=1$, we obtain that
    \begin{equation*}
        \begin{split}
            \norm{e^{\eta t\Delta}\curl{\Phi_N}}{L^p} &\lesssim e^{-c\eta t N^2}\norm{\Phi_N}{L^p} \\
            &\lesssim e^{-c\eta t N^2} \big( \norm{\nabla(P_{\leq N/2}\phi) \wedge B_N^1}{L^p} + N\norm{(P_{\leq N/2}\phi) B_N^1}{L^p} \big) \\
            &\lesssim e^{-c\eta t N^2} \big( \norm{\nabla(P_{\leq N/2}\phi)}{L^p} \norm{B_N^1}{L^{\infty}} + N\norm{P_{\leq N/2}\phi}{L^p} \norm{B_N^1}{L^{\infty}} \big) \\
            &\lesssim e^{-c\eta t N^2} 2N\norm{P_{\leq N/2}\phi}{L^p} \\
            &\lesssim Ne^{-c\eta t N^2},
        \end{split}
    \end{equation*}
	where the last norm is finite by the smoothness of $\phi$. For the part outside of the ball, denote $\varphi_N:=(P_{>N/2}\phi)B_N^1$. By the continuity of the heat kernel,
	$$\norm{e^{\eta t\Delta}\curl{\varphi_N}}{L^p} \lesssim \norm{\curl{\varphi_N}}{L^p}.$$
	Moreover, denote by $f:=P_{>N/2}\phi$ and observe that
	$$
	\norm{f}{L^p}\lesssim_k N^{-k}\norm{P_{>N/2}\abs{D}^k\phi}{L^p} \lesssim_k N^{-k}
	$$
	by Proposition \ref{prop:Bernstein_inequalities}. Choose $k=n+1$ and use that $\curl{f B_N}=\nabla f\wedge B_N + f\curl{B_N}=\nabla f\wedge B_N + NfB_N$ to obtain that
	$$\norm{\curl{\varphi_N}}{L^p} \leq \norm{\nabla f}{L^p}\norm{B_N^1}{L^\infty} + N\norm{f}{L^p}\norm{B_N^1}{L^\infty} \lesssim_{n} N^{-n}.$$
	We end up with
	$$\norm{e^{\eta t\Delta}\curl{\phi B_N^1}}{L^p} \lesssim_n e^{-c\eta tN^2}N + N^{-n}, \quad \forall t\geq0,\; \forall n\in\N,\; \forall1\leq p\leq\infty.$$
	For the estimate \eqref{eq:estim_norm_evolution_psiW_Lp_small_times} we compute explicitly the $H^r$-norm using its (Fourier) definition and Lemma \ref{lem:FT.params}:
	\begin{align}\label{fdsjklgjngsnjfk}
		\norm{e^{\eta(t-T)\Delta}\curl{\psi W}}{H^r}^2 &\simeq \int_{\R^3} (1+\abs{\xi}^2)^r e^{2\eta(T-t)\abs{\xi}^2} \abs{\xi}^2 \abs{\widehat{\psi W}(\xi)}^2\,\d\xi \\ \nonumber
		&\lesssim \int_{\R^3} (1+\abs{\xi}^2)^{r+1} e^{2\eta(T-t)\abs{\xi}^2} e^{-4\eta T(\abs{\xi_1}^2+\abs{\xi_2-1}^2+\abs{\xi_3}^3)}\,\d\xi \\ \nonumber
		&\lesssim \int_{\R^3} (1+\abs{\xi}^2)^{r+1} e^{2\eta T\abs{\xi}^2} e^{-4\eta T(\abs{\xi_1}^2+\abs{\xi_2-1}^2+\abs{\xi_3}^3)}\,\d\xi \\ \nonumber
		&\lesssim \int_{\R^3} (1+\abs{\xi}^2)^{r+1} e^{-2\eta T\abs{\xi_1}^2}e^{-2\eta T\abs{\xi_2-2}^2} e^{-2\eta T\abs{\xi_3}^2}\,\d\xi
	\end{align}
	and this integral is bounded by an absolute constant (that is inversely proportional to both parameters $T$ and $\eta$).
\end{proof}
When the time is large enough, we can improve these estimates:
\begin{lem}\label{lem:norm_estimates_large_times}
	Let $t>2T$. Then, for any $1\leq p\leq\infty$ and $r\in\N$,
	\begin{align}
		\norm{e^{\eta t\Delta}\curl{\phi B_N}}{L^p} &\lesssim e^{-c\eta tN^2}N + t^{-\frac{1}{2}-\frac{3}{2}\left(1-\frac{1}{p}\right)}, \label{eq:estim_norm_evolution_phiBN_Lp_large_times}\\
		\norm{e^{\eta(t-T)\Delta}\curl{\psi W}}{L^p} &\lesssim (t-T)^{-\frac{1}{2}-\frac{3}{2}\left(1-\frac{1}{p}\right)}, \label{eq:estim_norm_evolution_psiW_Lp_large_times} \\
		\norm{e^{\eta(t-T)\Delta}\curl{\psi W}}{H^r} &\lesssim t^{-1/4}+t^{-r-5/4}. \label{eq:estim_norm_evolution_psiW_Hr_large_times}
	\end{align}
\end{lem}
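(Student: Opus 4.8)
The plan is to prove the three bounds separately, in each case exploiting the first-order (divergence) structure of the curl together with the $L^1\to L^p$ dispersive decay of the heat semigroup from Proposition~\ref{prop:heat.dispersion}, and, for the Sobolev bound, a direct estimate on the Fourier side.

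For \eqref{eq:estim_norm_evolution_phiBN_Lp_large_times} I would keep the Littlewood--Paley splitting $\phi=P_{\leq N/2}\phi+P_{>N/2}\phi$ from the proof of Lemma~\ref{lem:norm_estimates_low_times}. The low-frequency contribution $(P_{\leq N/2}\phi)B_N^l$ has Fourier transform supported in the annulus $(0,0,\pm N)+B(0,N/2)\subset N\mathcal{A}$, so Proposition~\ref{prop:heat.act.annulus} produces the factor $e^{-c\eta tN^2}$, and $\norm{\curl{(P_{\leq N/2}\phi)B_N^l}}{L^p}\lesssim N\norm{P_{\leq N/2}\phi}{L^p}\lesssim N$ exactly as there; this accounts for the term $Ne^{-c\eta tN^2}$. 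For the high-frequency contribution I would note that every component of $\curl{(P_{>N/2}\phi)B_N}$ is of the form $\partial_j g$ with $g$ a component of $(P_{>N/2}\phi)B_N\in L^1(\R^3)$, and apply $\norm{e^{s\Delta}\partial_jg}{L^p}\lesssim s^{-\frac12-\frac32(1-1/p)}\norm{g}{L^1}$ from Proposition~\ref{prop:heat.dispersion}; since $\norm{B_N}{L^\infty}=1$ and, by the third Bernstein inequality in Proposition~\ref{prop:Bernstein_inequalities}, $\norm{P_{>N/2}\phi}{L^1}\lesssim_k N^{-k}\norm{(-\Delta)^{k/2}\phi}{L^1}\lesssim 1$ for $k$ even and large enough (so that $(-\Delta)^{k/2}\phi$ decays integrably), this contribution is $\lesssim t^{-\frac12-\frac32(1-1/p)}$, and the two add up to the claim.

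Estimate \eqref{eq:estim_norm_evolution_psiW_Lp_large_times} is the same argument without the splitting: $\psi$ is a Gaussian, so $\psi W\in L^1$ with $\norm{\psi W}{L^1}\lesssim\norm{\psi}{L^1}=C(T,\eta)$; writing each component of $\curl{\psi W}$ as $\partial_j g$ with $g$ a component of $\psi W\in L^1$ and using the same heat estimate with $s=\eta(t-T)$ gives $\norm{e^{\eta(t-T)\Delta}\curl{\psi W}}{L^p}\lesssim(\eta(t-T))^{-\frac12-\frac32(1-1/p)}$, and since $\eta,T$ are fixed and $t-T>t/2$ for $t>2T$, this is $\lesssim(t-T)^{-\frac12-\frac32(1-1/p)}$.

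Finally, for \eqref{eq:estim_norm_evolution_psiW_Hr_large_times} I would argue on the Fourier side. From $\widehat{e^{\eta(t-T)\Delta}\curl{\psi W}}(\xi)=e^{-\eta(t-T)\abs\xi^2}(\i\xi)\wedge\widehat{\psi W}(\xi)$ we get
\begin{equation*}
\norm{e^{\eta(t-T)\Delta}\curl{\psi W}}{H^r}^2\leq\int_{\R^3}(1+\abs\xi^2)^r\abs\xi^2\,e^{-2\eta(t-T)\abs\xi^2}\,\abs{\widehat{\psi W}(\xi)}^2\,\d\xi.
\end{equation*}
By Lemma~\ref{lem:FT.params}, $\widehat{\psi W}$ is a finite sum of translates of the Gaussian $\widehat\psi$ by the unit vectors $\pm e_1,\pm e_2,\pm e_3$, hence $\abs{\widehat{\psi W}(\xi)}^2$ is bounded uniformly in $\xi$ and decays like a Gaussian; in particular $\abs{\widehat{\psi W}(\xi)}^2\lesssim_{T,\eta}e^{-cT\eta\abs\xi^2}$ for $\abs\xi\geq2$. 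Splitting the integral at $\abs\xi=2$: on $\abs\xi\leq2$ the factors $(1+\abs\xi^2)^r$ and $\abs{\widehat{\psi W}}^2$ are bounded, so that part is $\lesssim_{T,\eta,r}\int_{\R^3}\abs\xi^2e^{-2\eta(t-T)\abs\xi^2}\,\d\xi\lesssim(t-T)^{-5/2}$; on $\abs\xi\geq2$, using $e^{-2\eta(t-T)\abs\xi^2}\leq e^{-2\eta(t-T)}$ together with the Gaussian decay of $\widehat{\psi W}$, that part is $\lesssim_{T,\eta,r}e^{-2\eta(t-T)}\lesssim_{T,\eta}(t-T)^{-5/2}$. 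Hence $\norm{e^{\eta(t-T)\Delta}\curl{\psi W}}{H^r}\lesssim_{T,\eta,r}(t-T)^{-5/4}$, and since $t-T\asymp t$ for $t>2T$ this is $\lesssim t^{-5/4}\leq t^{-1/4}+t^{-r-5/4}$, which gives the stated bound. I do not expect any genuine obstacle here; the only point requiring care throughout is tracking which parameters the implied constants may depend on ($r$, $T$ and $\eta$ are fixed, $N$ and $t$ are not), which is precisely what makes \eqref{eq:estim_norm_evolution_phiBN_Lp_large_times} the most delicate of the three.
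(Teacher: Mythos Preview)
Your arguments for \eqref{eq:estim_norm_evolution_phiBN_Lp_large_times} and \eqref{eq:estim_norm_evolution_psiW_Lp_large_times} are essentially the paper's: the same low/high frequency split for $\phi B_N$ with Proposition~\ref{prop:heat.act.annulus} on the annulus part and the $L^1\to L^p$ heat estimate from Proposition~\ref{prop:heat.dispersion} (with the derivative from $\mathrm{curl}$) on the remainder, and the direct $L^1\to L^p$ heat estimate for $\psi W$.

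For \eqref{eq:estim_norm_evolution_psiW_Hr_large_times} you take a genuinely different route. The paper does not split at a fixed radius; instead it uses the elementary inequality $2(T-t)<-t$ (valid for $t>2T$) to replace $e^{-2\eta(t-T)|\xi|^2}$ by $e^{-\eta t|\xi|^2}$, then bounds the translated Gaussians in $\widehat{\psi W}$ by a constant and reduces to the radial integral $\int_0^\infty(1+\zeta^2)^{r+2}e^{-\eta t\zeta^2}\,\d\zeta$, which it evaluates by an integration-by-parts induction to get $t^{-1/2}+t^{-r-5/2}$ for the squared norm. Your approach---separating $\{|\xi|\leq2\}$ from $\{|\xi|\geq2\}$ and exploiting the Gaussian decay of $\widehat{\psi W}$ on the outer region---is correct and in fact yields the sharper, $r$-independent bound $(t-T)^{-5/4}$, which you then weaken to the stated form. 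The paper's route is a bit more streamlined (no splitting, the $r$-dependence in the statement emerges naturally), while yours avoids the induction lemma and shows the bound is really uniform in $r$.
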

\begin{proof}
	For estimate \eqref{eq:estim_norm_evolution_phiBN_Lp_large_times}, the low Fourier modes can be handled as in Lemma \ref{lem:norm_estimates_low_times}, while for the high Fourier modes we can just use the heat kernel estimates:
	$$\norm{e^{\eta t\Delta}\curl{\phi B_N}}{L^p} \lesssim t^{-\frac{1}{2}-\frac{3}{2}\left(1-\frac{1}{p}\right)}\norm{\phi B_N}{L^1} \lesssim t^{-\frac{1}{2}-\frac{3}{2}\left(1-\frac{1}{p}\right)}.$$
	Since we are in the regime where $t-T>0$, by Proposition \ref{prop:heat.dispersion}, we have that
	$$\norm{e^{\eta(t-T)\Delta}\curl{\psi W}}{L^p} \lesssim (t-T)^{-\frac{1}{2}-\frac{3}{2}\left(1-\frac{1}{p}\right)}\norm{\psi W}{L^1} \lesssim (t-T)^{-\frac{1}{2}-\frac{3}{2}\left(1-\frac{1}{p}\right)}.$$
	Finally, for the $H^r$-norm, as we are imposing that $t>2T$, we have that $2(T-t)<-t$ and so
	\begin{align*}
		\norm{e^{\eta(t-T)\Delta}\curl{\psi W}}{H^r}^2 &\simeq \int_{\R^3} (1+\abs{\xi}^2)^r e^{2\eta(T-t)\abs{\xi}^2} \abs{\xi}^2 \abs{\widehat{\psi W}(\xi)}\,\d\xi \\
		&\lesssim \int_{\R^3} (1+\abs{\xi}^2)^{r+1} e^{2\eta(T-t)\abs{\xi}^2} e^{-4\eta T(\abs{\xi_1}^2+\abs{\xi_2-1}^2+\abs{\xi_3}^2)}\d\xi \\
		&\lesssim \int_{\R^3} (1+\abs{\xi}^2)^{r+1} e^{-\eta t\abs{\xi}^2} \d\xi \\
		&\lesssim \int_0^\infty (1+\zeta^2)^{r+2}e^{-\eta t\zeta^2}\d\zeta \\
		&\lesssim t^{-1/2}+t^{-r-5/2}.
	\end{align*}
	We have calculated this last integral by induction: our claim is that
	$$\int_0^\infty (1+y^2)^ke^{-ay^2}\,\d y \lesssim a^{-1/2}+a^{-k-1/2}.$$
    To prove it, we first use that $(1+y^2)^{k} \leq 2^{k-1}(1+y^{2k})$ to separate the integrals as
    \begin{equation*}
        \int_0^\infty (1+y^2)^k e^{-ay^2}\,\d y \lesssim \int_0^\infty e^{-ay^2}\,\d y + \int_0^\infty y^{2k} e^{-ay^2}\,\d y.
    \end{equation*}
    Now, it suffices to prove that 
    \begin{equation}\label{eq:ind.hip.int.exp}
        \forall k\in\N_0, \quad \int_0^\infty y^{2k} e^{-ay^2}\,\d y \lesssim a^{-k-1/2}.
    \end{equation}
 We will do it by induction over $k$, the case $k=0$ being well-known. Assume that \eqref{eq:ind.hip.int.exp} is true for $k=m$ and let us prove that it holds for $k=m+1$: using integration by parts,
    \begin{multline*}
        \int_0^\infty y^{2(m+1)} e^{-ay^2}\,\d y = -\frac{1}{2a} \int_0^\infty y^{2m+1}\frac{\d}{\d y}e^{-ay^2}\,\d y \\
        = \frac{2m+1}{2a} \int_0^\infty y^{2m} e^{-ay^2}\,\d y \overset{\text{h.i.}}{\lesssim} \frac{1}{2a}a^{-m-1/2} \lesssim a^{-(m+1)-1/2}.
    \end{multline*}
\end{proof}
If one wants to introduce the derivatives, it can be easily checked with the same proof of the previous lemmas that
\begin{align}
	\norm{e^{\eta t\Delta}\partial^\alpha\curl{\phi B_N}}{L^p} &\lesssim e^{-c\eta tN^2}N^{\abs{\alpha}+1} + N^{-n}, \label{eq:estim_norm_evolution_alpha-phiBN_Lp_small_times}\\
	\norm{e^{\eta(t-T)\Delta}\partial^\alpha\curl{\psi W}}{H^r} &\lesssim C(T,\eta,\abs{\alpha}) \label{eq:estim_norm_evolution_alpha-psiW_Lp_small_times}
\end{align}
for all times $t\geq0$ and
\begin{align}
	\norm{e^{\eta t\Delta}\partial^\alpha\curl{\phi B_N}}{L^p} &\lesssim e^{-c\eta tN^2}N^{\abs{\alpha}+1} + t^{-\frac{\abs{\alpha}+1}{2}-\frac{3}{2}\left(1-\frac{1}{p}\right)}, \label{eq:estim_norm_evolution_alpha-phiBN_Lp_large_times}\\
	\norm{e^{\eta (t-T)\Delta}\partial^\alpha\curl{\psi W}}{L^p} &\lesssim (t-T)^{-\frac{\abs{\alpha}+1}{2}-\frac{3}{2}\left(1-\frac{1}{p}\right)} \label{eq:estim_norm_evolution_alpha-psiW_Lp_large_times}
\end{align}
when $t>2T$.
\begin{remark}
	If one keeps track of the constants, in the end they are inversely proportional to the resistivity.
\end{remark}
\begin{prop}
	The initial data $(u_0,b_0)$ defined in \eqref{eq:def_large_initial_data} satisfies the hypothesis \eqref{eq:hypothesis_data_for_global_existence} of the Theorem \ref{thm:global_existence_large_data}.
\end{prop}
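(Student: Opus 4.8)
The plan is to derive the two conditions in \eqref{eq:hypothesis_data_for_global_existence} directly from the norm estimates of Lemmas \ref{lem:norm_estimates_low_times} and \ref{lem:norm_estimates_large_times} together with their derivative versions \eqref{eq:estim_norm_evolution_alpha-phiBN_Lp_small_times}--\eqref{eq:estim_norm_evolution_alpha-psiW_Lp_large_times}; no new estimate is needed, only a bookkeeping of the time integrals. Throughout, all constants may depend on the fixed parameters $r,\eta,T,M$ (recall that $M$ and $T$ are prescribed \emph{before} $N$ and $\rho$ are chosen). For the first condition, note that by \eqref{eq:def_large_initial_data} one has $u_0-b_0=-\rho\, e^{-\eta T\Delta}\curl{\psi W}$, hence $\norm{u_0-b_0}{H^r}=\rho\,\norm{e^{-\eta T\Delta}\curl{\psi W}}{H^r}\leq \rho\,C(T,\eta)$ by \eqref{eq:estim_norm_evolution_psiW_Lp_small_times} evaluated at $t=0$; one then concludes by absorbing the fixed constant $C(T,\eta)$ into the smallness of $\rho$ (equivalently, by shrinking the constant $c$ in $\rho\leq cN^{-r-1}$).

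For the second condition I would treat separately the two pieces of $e^{\eta t\Delta}u_0=M\,e^{\eta t\Delta}\curl{\phi B_N}$ and $e^{\eta t\Delta}b_0=M\,e^{\eta t\Delta}\curl{\phi B_N}+\rho\,e^{\eta(t-T)\Delta}\curl{\psi W}$. For the Beltrami piece, fix $\abs{\alpha}\leq k\leq r$ and split the time integral at $t=2T$. On $[0,2T]$ I would use \eqref{eq:estim_norm_evolution_alpha-phiBN_Lp_small_times} with $p=\infty$ and, say, $n=0$, giving $\norm{e^{\eta t\Delta}\partial^\alpha\curl{\phi B_N}}{L^\infty}\lesssim e^{-c\eta tN^2}N^{k+1}+1$; taking the $L^2_t$ norm and using $\big(\int_0^\infty e^{-2c\eta tN^2}\,\d t\big)^{1/2}\simeq N^{-1}$ yields a bound $\lesssim N^k+1$. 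On $[2T,\infty)$ I would use \eqref{eq:estim_norm_evolution_alpha-phiBN_Lp_large_times} with $p=\infty$, so that $\norm{e^{\eta t\Delta}\partial^\alpha\curl{\phi B_N}}{L^\infty}\lesssim e^{-c\eta tN^2}N^{k+1}+t^{-\frac{\abs{\alpha}+1}{2}-\frac32}$: the exponential term again contributes $\lesssim N^k$, and since $\tfrac{\abs{\alpha}+1}{2}+\tfrac32\geq 2>\tfrac12$ the polynomial tail lies in $L^2_t([2T,\infty))$ with norm $\lesssim C(T,k)$. Summing over $\abs{\alpha}\leq k$ and multiplying by $M$ gives $\norm{e^{\eta t\Delta}u_0}{L^2_tW^{k,\infty}_x}\lesssim M(1+N^k)\lesssim 1+N^k$.

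For the $\psi W$ piece I would again split at $t=2T$. On $[0,2T]$, estimate \eqref{eq:estim_norm_evolution_alpha-psiW_Lp_small_times} bounds $\norm{e^{\eta(t-T)\Delta}\partial^\alpha\curl{\psi W}}{H^r}$ by a fixed constant, uniformly in $t$, and since $r\geq 3\geq 2$ the Sobolev embedding $H^2\hookrightarrow L^\infty$ converts this into a uniform bound on the $W^{k,\infty}_x$ norm; here one uses that the wide Gaussian $\psi(x)=e^{-\abs{x}^2/(8\eta T)}$ makes $\widehat{\psi W}$ decay fast enough to absorb the backward-heat growth $e^{\eta(T-t)\abs{\xi}^2}$ for $t\leq 2T$ (this is exactly the computation \eqref{fdsjklgjngsnjfk}, which is what forces the factor $8\eta T$ in the definition of $\psi$). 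On $[2T,\infty)$, estimate \eqref{eq:estim_norm_evolution_alpha-psiW_Lp_large_times} with $p=\infty$ provides the $L^2_t$-integrable decay $(t-T)^{-\frac{\abs{\alpha}+1}{2}-\frac32}$. Hence $\norm{e^{\eta(t-T)\Delta}\curl{\psi W}}{L^2_tW^{k,\infty}_x}\lesssim C(T,\eta,k)$, and, since $\rho$ is small, this piece contributes $\lesssim \rho\lesssim 1$ to \eqref{eq:hypothesis_data_for_global_existence}. Combining the two pieces gives the second condition.

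The only genuinely delicate point is matching the exact exponent $N^k$ on the right-hand side of \eqref{eq:hypothesis_data_for_global_existence}: the spatial $W^{k,\infty}$ norm of $\curl{\phi B_N}$ and its derivatives is of size $N^{k+1}$, and it is precisely the $L^2$ integration in time against the heat decay $e^{-c\eta N^2 t}$ of the frequency-$N$ bulk of $\phi B_N$ that converts this into $N^k$; one then checks that the leftover low-frequency/polynomial tails are square-integrable in time on $[2T,\infty)$, which holds comfortably in three space dimensions. Everything else is a routine assembly of the estimates already established, with constants depending only on $r,\eta,T,M$.
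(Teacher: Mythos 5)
Your proposal is correct and follows essentially the same route as the paper: the difference $u_0-b_0=-\rho\,e^{-\eta T\Delta}\curl{\psi W}$ is bounded via the $H^r$ estimate for $\curl{\psi W}$, and the $L^2_tW^{k,\infty}_x$ bound is obtained by splitting time at $2T$, applying the derivative versions of Lemmas \ref{lem:norm_estimates_low_times} and \ref{lem:norm_estimates_large_times} with $p=\infty$, and using $\big(\int_0^\infty e^{-2c\eta tN^2}\,\d t\big)^{1/2}\simeq N^{-1}$ to convert $N^{k+1}$ into $N^k$, with the $\psi W$ contribution to $b_0$ absorbed by the factor $\rho$. Your write-up is in fact somewhat more careful than the paper's (explicitly checking square-integrability of the polynomial tails and the Sobolev embedding step for the $\psi W$ piece), but the underlying argument is identical.
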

\begin{proof}
	On the one hand, the smallness on the difference is trivially satisfied:
	$$\norm{u_0-b_0}{H^r} = \rho\norm{e^{-\eta T\Delta}\curl{\psi W}}{H^r} \lesssim \rho.$$
	On the other hand,
	\begin{equation*}
		\begin{split}
			\norm{e^{\eta t\Delta}u_0}{W^{r,\infty}}^2 &\simeq \sum_{m=0}^r M\norm{e^{\eta t\Delta}\nabla^m\curl{\phi B_N}}{L^\infty}^2 \\
			&\lesssim e^{-2c\eta tN^2}N^{2(r+1)}+N^{-2n}\chi_{[0,2T]}(t) + t^{-(r+1)-3}\chi_{(2T,\infty)}(t),
		\end{split}
	\end{equation*}
	where we have used the estimates \eqref{eq:estim_norm_evolution_alpha-phiBN_Lp_small_times} and \eqref{eq:estim_norm_evolution_alpha-phiBN_Lp_large_times} with $p=\infty$. Then,
	$$\norm{e^{\eta t\Delta}u_0}{L_t^2W_x^{r,\infty}}^2 \lesssim \int_0^\infty \left(e^{-2c\eta sN^2}N^{2(m+1)}+N^{-2n}\chi_{[0,2T]} + s^{-(m+1)-3}\chi_{(2T,\infty)}\right)\,\d s \lesssim 1+N^{2m}.$$
	The same works for $\norm{e^{\eta t\Delta}b_0}{L_t^2W_x^{r,\infty}}$ because the part involving $\rho\norm{e^{-\eta (T-t)\Delta}\nabla^m\curl{\psi W}}{L^\infty}$ can be controlled by a factor of $\rho$ using the estimates \eqref{eq:estim_norm_evolution_alpha-psiW_Lp_small_times} and \eqref{eq:estim_norm_evolution_alpha-psiW_Lp_large_times}.
\end{proof}

Moreover, the initial data satisfies the following properties:
\begin{lem}\label{lem:curl.phi.BN.crit.points}
	Let $N$ be sufficiently large with respect to $\alpha$. Then, the field $\curl{\phi B_N}$ has no critical points.
\end{lem}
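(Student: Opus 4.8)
The plan is to exploit the explicit formulas \eqref{eq:def_b01}. First I would use the pointwise identity for the curl of a product together with the Beltrami property $\curl{B_N}=NB_N$, exactly as in the proof of Lemma~\ref{lem:norm_estimates_low_times}, to write
$$\curl{\phi B_N}=\nabla\phi\wedge B_N+\phi\,\curl{B_N}=\nabla\phi\wedge B_N+N\phi B_N .$$
The key observation is that the two summands are orthogonal at every point: $\nabla\phi\wedge B_N\perp B_N$ (it is a cross product with $B_N$), while $N\phi B_N\parallel B_N$. Hence, writing $V:=\curl{\phi B_N}$, the Pythagorean identity gives
$$\abs{V(x)}^2=\abs{\nabla\phi(x)\wedge B_N(x)}^2+N^2\phi(x)^2\abs{B_N(x)}^2 .$$
Since $\abs{B_N(x)}^2=\sin^2(Nx_3)+\cos^2(Nx_3)=1$ and $\phi(x)=(1+\abs{x}^2)^{-\alpha}>0$ for every $x$, we obtain $\abs{V(x)}^2\ge N^2\phi(x)^2>0$, so $V$ has no zeros. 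In fact this argument already works for any $N\neq 0$, so the hypothesis that $N$ be large with respect to $\alpha$ is amply sufficient.

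If one prefers an argument in the style of the $L^p$ estimates above (which may also be slightly more robust under perturbations of $\phi B_N$), I would instead argue by contradiction: if $V(x_0)=0$ then $N\phi(x_0)B_N(x_0)=-\nabla\phi(x_0)\wedge B_N(x_0)$, and taking Euclidean norms, using $\abs{B_N(x_0)}=1$ and $\abs{a\wedge b}\le\abs{a}\abs{b}$, yields $N\phi(x_0)\le\abs{\nabla\phi(x_0)}$. Computing $\nabla\phi(x)=-2\alpha(1+\abs{x}^2)^{-\alpha-1}x$, this inequality reads $N(1+\abs{x_0}^2)\le 2\alpha\abs{x_0}$, which is impossible as soon as $N>\alpha$, because $1+\abs{x_0}^2\ge 2\abs{x_0}$ by the arithmetic--geometric mean inequality.

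There is no genuine obstacle in this proof, as the computation is short; the one point worth stressing is the identity $\abs{B_N}\equiv 1$. This is special to the degenerate ABC flow fixed in \eqref{eq:def_b01}, and it is exactly the feature that a general localized Beltrami field lacks: as recalled in the introduction, Beltrami fields on $\R^3$ typically possess several critical points \cite{EPR23}, which is precisely why the nowhere-vanishing property here is delicate and does not transfer to the Navier--Stokes construction. I would therefore flag explicitly that this step uses the explicit choice of $B_N$ and not merely that it is a Beltrami field.
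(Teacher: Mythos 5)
Your proof is correct. Your second (contradiction) argument is essentially the paper's own: the paper writes $\curl{\phi B_N}=N\phi B_N+\nabla\phi\wedge B_N$ and applies the reverse triangle inequality together with $|\nabla\phi\wedge B_N|\le|\nabla\phi|\,|B_N|=2\alpha|x|(1+|x|^2)^{-\alpha-1}\le 2\alpha(1+|x|^2)^{-\alpha-1/2}$, obtaining the pointwise lower bound $|\curl{\phi B_N}|\gtrsim N(1+|x|^2)^{-\alpha}$ once $N\gg\alpha$; this is the same computation you perform, merely phrased as a direct estimate rather than a contradiction. Your first argument is a genuinely different and slicker route: observing that $\nabla\phi\wedge B_N\perp B_N$ while $N\phi B_N\parallel B_N$, the two summands cannot cancel at any point, and the Pythagorean identity yields $|\curl{\phi B_N}(x)|\ge N\phi(x)=N(1+|x|^2)^{-\alpha}$ for every $N\neq0$, with no largeness assumption on $N$ and with the same quantitative polynomial lower bound that the paper needs downstream (to conclude that $b_0$ itself has no zeros after adding the $\rho$-perturbation). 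What the paper's version buys is only that it extends verbatim to situations where the orthogonal splitting is unavailable; what yours buys is the removal of the hypothesis $N\gg\alpha$. Your closing remark that the whole argument hinges on $|B_N|\equiv1$ and $\phi>0$ --- i.e.\ on the explicit degenerate ABC flow rather than on the Beltrami property alone --- is exactly the right caveat.
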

\begin{proof}
	Observe that $\curl{\phi B_N} = N\phi B_N+(\nabla\phi)\wedge B_N$ and that
	\begin{multline*}
		\abs{N\phi(x)B_N(x)+(\nabla\phi)\wedge B_N(x)} = \abs{N\phi(x)B_N(x)-\frac{2\alpha x\wedge B_N(x)}{(1+\abs{x}^2)^{\alpha+1}}} \\
		\geq \frac{N}{(1+\abs{x}^2)^\alpha}-\frac{2\alpha}{(1+\abs{x}^2)^{\alpha+1/2}} \gg c\frac{N}{(1+\abs{x}^2)^\alpha}
	\end{multline*}
	for some constant $0<c<1$, where we have used the inverse triangular inequality, that $\abs{B_N}=1$ and the fact that $N\gg\alpha$. In particular, this quantity never vanishes.
\end{proof}
\begin{lem}
	The field $\curl{\psi W}$ has $(0,0,0)$ as a hyperbolic point.
\end{lem}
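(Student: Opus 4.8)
The plan is to compute $\curl{\psi W}$ near the origin and inspect its linearization there. Using the identity $\curl{\psi W}=\nabla\psi\wedge W+\psi\,\curl W$ together with the elementary computation
$$\curl W=(\sin x_3,\sin x_1,\sin x_2),$$
and noting that $\nabla\psi(x)=-\tfrac{x}{4\eta T}\,\psi(x)$, one sees that $\nabla\psi(0,0,0)=0$ and $\curl W(0,0,0)=0$; hence $\curl{\psi W}(0,0,0)=0$, so the origin is indeed a critical point, and it remains to check that the Jacobian $D(\curl{\psi W})(0,0,0)$ has no eigenvalue with zero real part.

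Differentiating the product identity and evaluating at the origin, the terms in which $\nabla\psi$ or $\curl W$ would appear undifferentiated vanish, since $\nabla\psi(0,0,0)=0$ and $\curl W(0,0,0)=0$. Thus only two contributions survive: from $\nabla\psi\wedge W$ one picks up the Hessian $D^2\psi(0,0,0)=-\tfrac{1}{4\eta T}\,\mathrm{Id}$ contracted against $W(0,0,0)=(1,1,1)$, which produces $\tfrac{1}{4\eta T}$ times the antisymmetric matrix associated with the vector $(1,1,1)$; from $\psi\,\curl W$ one picks up $\psi(0,0,0)=1$ times the Jacobian of $\curl W$ at the origin, which is the cyclic permutation $e_1\mapsto e_2\mapsto e_3\mapsto e_1$. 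Writing $s:=\tfrac{1}{4\eta T}>0$, these combine to
$$D(\curl{\psi W})(0,0,0)=\begin{pmatrix}0 & -s & 1+s\\ 1+s & 0 & -s\\ -s & 1+s & 0\end{pmatrix}.$$

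The key observation is that this matrix is circulant with first row $(0,-s,1+s)$, so its eigenvalues are $\lambda_k=-s\,\omega^k+(1+s)\,\omega^{2k}$ for $k=0,1,2$, with $\omega=e^{2\pi i/3}$; explicitly $\lambda_0=1$ and $\lambda_{1,2}=-\tfrac12\mp i\tfrac{\sqrt3}{2}(1+2s)$. (Equivalently: the trace is $0$ and $\det=1+3s+3s^2\neq0$, and $\lambda_0=1$ is an eigenvalue, so the remaining two form a conjugate pair summing to $-1$.) The three real parts are $1,-\tfrac12,-\tfrac12$, none of which vanishes, so $(0,0,0)$ is a hyperbolic critical point. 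There is no genuine obstacle here beyond carefully organizing the Jacobian computation; once the circulant structure is recognized, the eigenvalue check is a one-line formula.
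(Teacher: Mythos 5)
Your proof is correct and follows essentially the same route as the paper: compute the Jacobian of $\curl{\psi W}$ at the origin (your matrix agrees exactly with the paper's, with $s=\tfrac{1}{4\eta T}$, and your sign $\curl W=(\sin x_3,\sin x_1,\sin x_2)$ is the correct one — the paper's displayed formula for $\curl W$ carries a spurious minus sign, though its final matrix is consistent with yours) and then check that the eigenvalues $1$ and $-\tfrac12\mp\tfrac{\i\sqrt3}{2}(1+2s)$ have nonzero real part. Your use of the circulant structure is a slightly slicker way to extract the eigenvalues, but the argument is the same.
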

\begin{proof}
	Compute $\curl{\psi W} = \psi\,\curl{W} + (\nabla \psi)\wedge W$. As 
	$$\curl{W}(x)=- (\sin x_3,\,\sin x_1,\,\sin x_2) \quad\text{and}\quad \nabla{\psi}(x)=-\frac{x}{4\eta T}e^{-\frac{\abs{x}^2}{8\eta T}},$$
	we conclude that $\curl{\psi W}(0)=0$, so $0$ is a critical point for the field. Moreover, a direct computation shows that
	$$\nabla\curl{\psi W}(0) = \frac{1}{4\eta T}\left(\begin{array}{ccc}
		0 & -1 & 4\eta T+1 \\
		4\eta T+1 & 0 & -1 \\
		-1 & 4\eta T+1 & 0
	\end{array}\right),$$
	so
	$$\det(\nabla\curl{\psi W}(0))=1+\frac{3}{4\eta T}+\frac{3}{(4\eta T)^2}\neq0$$
	and it has the eigenvalues
	\begin{align*}
		\lambda_1 &= 1, \\
		\lambda_2 &= -\frac{1}{2} + \i\frac{\sqrt{3}}{2} \sqrt{1 + \frac{1}{\eta T} + \frac{1}{4(\eta T)^2}}, \\
		\lambda_3 &= \overline{\lambda_2}.
	\end{align*}
	Thus, $(0,0,0)$ is hyperbolic for the field.
\end{proof}

\begin{lem}[Size of the initial data]
	Let $(u_0,b_0)$ be as in \eqref{eq:def_large_initial_data}. Then, the size of the initial data is large in any critical space:
	$$\norm{u_0}{\dot{B}_{\infty,\infty}^{-1}} \simeq M \simeq \norm{b_0}{\dot{B}_{\infty,\infty}^{-1}}.$$
\end{lem}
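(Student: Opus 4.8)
The plan is to rely on the caloric characterization of $\dot{B}_{\infty,\infty}^{-1}$ recorded in Remark~\ref{rmk:Besov.caloric}, namely $\norm{f}{\dot{B}_{\infty,\infty}^{-1}} \simeq \sup_{t>0} t^{1/2}\norm{e^{t\Delta}f}{L^\infty}$, which after the substitution $t\mapsto\eta t$ becomes $\norm{f}{\dot{B}_{\infty,\infty}^{-1}} \simeq_\eta \sup_{t>0} t^{1/2}\norm{e^{\eta t\Delta}f}{L^\infty}$. With this in hand everything reduces to two independent estimates: a two-sided bound $\norm{\curl{\phi B_N}}{\dot{B}_{\infty,\infty}^{-1}}\simeq 1$ (controlling $u_0=M\curl{\phi B_N}$), and a one-sided bound $\norm{e^{-\eta T\Delta}\curl{\psi W}}{\dot{B}_{\infty,\infty}^{-1}}\le C(T,\eta)$ with a constant independent of $M$ and $N$ (to pass from $u_0$ to $b_0$).

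For the upper bound on $u_0$ I would insert the decay estimates \eqref{eq:estim_norm_evolution_phiBN_Lp_small_times} and \eqref{eq:estim_norm_evolution_phiBN_Lp_large_times} (with $p=\infty$) into the caloric formula: the term $t^{1/2}e^{-c\eta tN^2}N$ has supremum $\simeq_\eta 1$, attained near $t\simeq(\eta N^2)^{-1}$, while the remainder contributes $t^{1/2}N^{-n}$ for $t\le 2T$ and $t^{1/2}t^{-2}=t^{-3/2}$ for $t>2T$, both uniformly bounded (using $N\ge1$). Hence $\norm{\curl{\phi B_N}}{\dot{B}_{\infty,\infty}^{-1}}\lesssim 1$ and so $\norm{u_0}{\dot{B}_{\infty,\infty}^{-1}}\lesssim M$.

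The lower bound $\norm{u_0}{\dot{B}_{\infty,\infty}^{-1}}\gtrsim M$ is the core point, and I would prove it by testing the caloric quantity at the single time $t_\ast:=(\eta N^2)^{-1}$. Split $\curl{\phi B_N}=N\phi B_N+\nabla\phi\wedge B_N$. The gradient summand is negligible: $\abs{\nabla\phi}\lesssim\alpha$ and $e^{s\Delta}$ is an $L^\infty$-contraction, so its heat evolution is $O_\alpha(1)$ in $L^\infty$, and after multiplication by $t_\ast^{1/2}\simeq_\eta N^{-1}$ its contribution tends to $0$. For the main summand I would evaluate $e^{\eta t_\ast\Delta}(\phi B_N)$ at $x=0$: writing $G_s$ for the heat kernel, the first component is an odd integral and vanishes, and the second equals $\int G_{\eta t_\ast}(y)\,\phi(y)\cos(Ny_3)\,\d y$. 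Since $G_{\eta t_\ast}$ concentrates on scale $\sqrt{\eta t_\ast}=N^{-1}\ll1$, replacing $\phi$ by $\phi(0)=1$ costs at most $\int G_{\eta t_\ast}(y)\abs{1-\phi(y)}\,\d y\lesssim\alpha\int G_{\eta t_\ast}(y)\abs{y}^2\,\d y\simeq\alpha N^{-2}$ (using $0\le 1-\phi(y)\le\alpha\abs{y}^2$), while the leading integral is $\int G_{\eta t_\ast}(y)\cos(Ny_3)\,\d y=e^{-\eta t_\ast N^2}=e^{-1}$. Thus that value is $\ge e^{-1}/2$ for $N$ large relative to $\alpha$, so $\norm{e^{\eta t_\ast\Delta}\curl{\phi B_N}}{L^\infty}\ge\tfrac14 Ne^{-1}$, and therefore $t_\ast^{1/2}\norm{e^{\eta t_\ast\Delta}\curl{\phi B_N}}{L^\infty}\gtrsim_\eta 1$, giving $\norm{u_0}{\dot{B}_{\infty,\infty}^{-1}}\gtrsim_\eta M$. (One may alternatively quote \cite{CL24}, where precisely this size computation is carried out.)

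Finally, for $b_0=u_0+\rho\,e^{-\eta T\Delta}\curl{\psi W}$ the triangle inequality in $\dot{B}_{\infty,\infty}^{-1}$ gives $\bigl|\norm{b_0}{\dot{B}_{\infty,\infty}^{-1}}-\norm{u_0}{\dot{B}_{\infty,\infty}^{-1}}\bigr|\le\rho\,\norm{e^{-\eta T\Delta}\curl{\psi W}}{\dot{B}_{\infty,\infty}^{-1}}$. To bound the last factor I would use the caloric formula once more, now for $e^{\eta t\Delta}e^{-\eta T\Delta}\curl{\psi W}=e^{\eta(t-T)\Delta}\curl{\psi W}$: by the Sobolev embedding $H^2\hookrightarrow L^\infty$ together with \eqref{eq:estim_norm_evolution_psiW_Lp_small_times} one has $\norm{e^{\eta(t-T)\Delta}\curl{\psi W}}{L^\infty}\le C(T,\eta)$ for $t\in[0,2T]$, while \eqref{eq:estim_norm_evolution_psiW_Lp_large_times} at $p=\infty$ gives $\norm{e^{\eta(t-T)\Delta}\curl{\psi W}}{L^\infty}\lesssim(t-T)^{-2}$ for $t>2T$; multiplying by $t^{1/2}$ and taking the supremum yields $\norm{e^{-\eta T\Delta}\curl{\psi W}}{\dot{B}_{\infty,\infty}^{-1}}\le C(T,\eta)$. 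Since $M$ and $T$ are fixed first and $N$ is then taken large, $\rho\le cN^{-r-1}$ can be made as small as one wishes relative to $M/C(T,\eta)$, so $\norm{b_0}{\dot{B}_{\infty,\infty}^{-1}}=\norm{u_0}{\dot{B}_{\infty,\infty}^{-1}}+O(\rho C(T,\eta))\simeq M$ as well. I expect the main obstacle to be precisely the lower bound for $u_0$: one must check that the $O(1)$-scale cutoff $\phi$ does not destroy the $L^\infty$-size $\simeq N$ carried by the high-frequency oscillation $B_N$ at the critical time scale $t\simeq N^{-2}$, which is the non-concentration estimate described above.
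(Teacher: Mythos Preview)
Your proposal is correct and follows the same overall strategy as the paper: use the caloric characterization of $\dot{B}_{\infty,\infty}^{-1}$ from Remark~\ref{rmk:Besov.caloric}, identify $N\phi B_N$ as the dominant part of $\curl{\phi B_N}$, and for $b_0$ apply the triangle inequality together with the smallness of $\rho$ relative to $M$.

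The one notable difference is in how you obtain the lower bound $\norm{u_0}{\dot{B}_{\infty,\infty}^{-1}}\gtrsim M$. The paper writes the commutator expansion
\[
e^{t\Delta}(\phi B_N) \;=\; e^{-tN^2}\phi B_N \;+\; [e^{t\Delta},\phi]B_N
\]
and then defers to \cite[Lemma 3.7]{CL24} to show that the commutator and the $\nabla\phi\wedge B_N$ contribution are lower-order remainders, noting explicitly that this step may require rescaling $\phi$ by an $M$-dependent dilation parameter $L$. Your argument is more direct and self-contained: you fix the single time $t_\ast=(\eta N^2)^{-1}$ and the single spatial point $x=0$, use the odd/even parity of $B_N$ and $\phi$ in $y_3$ to kill one component, and control the replacement of $\phi$ by $\phi(0)=1$ via the second moment of the heat kernel at scale $N^{-1}$, together with the elementary bound $0\le 1-\phi(y)\le\alpha\abs{y}^2$. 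This avoids both the commutator machinery and the auxiliary dilation $L$, at the minor cost of being tied to the specific choice $\phi(x)=(1+\abs{x}^2)^{-\alpha}$ and the evaluation point $x=0$. Both routes yield the same conclusion; yours has the advantage of not needing to import the estimate from \cite{CL24}.
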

\begin{proof}
	The proof for $\norm{u_0}{\dot{B}_{\infty,\infty}^{-1}}$ follows the same lines as in \cite[Lemma 3.7]{CL24}. We recall, for the reader convenience, the computation of the dominant part of the $\dot{B}_{\infty,\infty}^{-1}$ norm
	of $u_0$  
    \begin{equation*}
        \norm{M N e^{-\eta t N^2}(\phi B_N)}{\dot{B}_{\infty,\infty}^{-1}} 
        \simeq M N \sup_{t>0} \sqrt{t} e^{-\eta tN^2}\norm{\phi B_N}{L^\infty} \simeq M,
    \end{equation*}
    the implicit constant in the last equivalence being proportional to $\frac1{\sqrt{\eta}}$. 
The fact that this is the main part  of the $\dot{B}_{\infty,\infty}^{-1}$ norm
	of $u_0$ can be seen, recalling the caloric description  of Besov spaces given in Remark \ref{rmk:Besov.caloric}, 
	 from the expansion
   $$
e^{t\Delta}u_0 
 = N e^{-t \lambda^2} \phi    B_{N}  +  N [e^{t\Delta} , \phi]  B_{N}   
+ e^{t\Delta} ( \nabla\phi \wedge B_{N} ).
    $$
Then one can proceed as in
\cite[Lemma 3.7]{CL24} to show that the second and third terms behaves as reminders, 
after rescaling the function $\phi$ with an appropriate ($M$ dependent) factor $L$, ad done in \cite[Lemma 3.7]{CL24}.
This rescaling is consistent with the proof of reconnection that we give, that for simplicity we only present for $L=1$.

Regarding the $\dot{B}_{\infty,\infty}^{-1}$ norm of $b_0$ one must notice that 
$$
	\| u_0 - b_0 \|_{\dot{B}_{\infty,\infty}^{-1}} = \rho \| e^{-\eta T\Delta}\curl{\psi W} \|_{\dot{B}_{\infty,\infty}^{-1}}
$$
and take $\rho$ sufficiently small, depending on $M$ (as we will also do in the proof of the reconnection).

\end{proof}

\subsection{Proof of the reconnection}
Our goal is to prove that the  solution obtained by applying Theorem \ref{thm:global_existence_large_data} to the class of initial data defined in \eqref{eq:def_large_initial_data} shows magnetic reconnection. To do so, we will make use of the robust stability of the number of hyperbolic critical points that a field has and carry out a perturbative argument. The rest of the article is devoted to the proof the reconnection result:
\begin{thm2}\label{thm:large_magnetic_reconnection}
    Given any constants $\eta,M,\,T>0$, there exist smooth divergence-free vector fields $u_0,\,b_0$ in $\R^3$ such that $\norm{u_0}{\Dot{B}_{\infty,\infty}^{-1}} \simeq \norm{b_0}{\Dot{B}_{\infty,\infty}^{-1}} \simeq M$ and \eqref{eq:MHD} admits a (unique) global strong solution $(u,b)$ with initial datum $(u_0,b_0)$, such that the magnetic lines at time $t=0$ and $t=T$ are not topologically equivalent, meaning that there is no homeomorphism of $\R^3$ into itself mapping the magnetic lines of $b(0,\cdot)$ into those of $b(T,\cdot)$.
\end{thm2}
\begin{proof}
Notice that the perturbation $h$ defined in \eqref{eq:def_perturbations} satisfies
$$h(T) = b(T)-e^{\eta T\Delta}b_0 = b(T)-Me^{\eta T\Delta}\curl{\phi B_N}-\rho\curl{\psi W}.$$
On the other hand, by Duhamel's formula, $h(T)=e^{\eta T\Delta}h_0+D_h(T) = D_h(T)$ (since $h_0=0$), so we can put together both equations and arrive to:
\begin{equation}\label{eq:main_eq}
	\frac{1}{\rho}b(T)-\curl{\psi W} = \frac{M}{\rho}e^{\eta T\Delta}\curl{\phi B_N} + \frac{D_h(T)}{\rho}\,,
\end{equation}
where
\begin{subequations}\label{eq:Duhamel_h}
	\begin{align}
		D_h(t) &= \int_0^t e^{\eta(t-s)\Delta}\div\left(h(s)\otimes v(s) - v(s)\otimes h(s)\right)\,\d s \tag{$D_h^1$}\label{eq:Duhamel_h1}\\
		&+ \int_0^t e^{\eta(t-s)\Delta}\div\left(-e^{\eta s\Delta}u_0\otimes e^{\eta s\Delta}b_0 + e^{\eta s\Delta}b_0\otimes e^{\eta s\Delta}u_0\right)\,\d s \tag{$D_h^2$}\label{eq:Duhamel_h2}\\
		&+ \int_0^t e^{\eta (t-s)\Delta}\div\left(-e^{\eta s\Delta}u_0\otimes h(s) + e^{\eta s\Delta}b_0\otimes v(s) - v(s)\otimes e^{\eta t\Delta}b_0 + h(s)\otimes e^{\eta s\Delta}u_0\right)\,\d s\,. \tag{$D_h^3$}\label{eq:Duhamel_h3}
	\end{align}
\end{subequations}
If we could show that the $C^1$-norm of the left-hand side of \eqref{eq:main_eq} is small, then we can guarantee by the implicit function theorem that $b(T)$ also has a hyperbolic critical point. On the other hand, $b_0$ has no critical points. Indeed, one can bound from below
$$\abs{b_0}>M\abs{\curl{\phi B_N}}-\rho\abs{e^{-\eta T\Delta}\curl{\psi W}}.$$
Now, observe that $e^{-\eta T\Delta}\curl{\psi W}\in L^\infty$ by \eqref{eq:estim_norm_evolution_psiW_Lp_small_times} with $t=0$ and Sobolev embedding (we are considering in fact $r \geq 3$), and that $\abs{\curl{\phi B_N}}>0$ by Lemma \ref{lem:curl.phi.BN.crit.points} for suitable choices of $\alpha$ and $N$. Then, one can choose $\rho>0$ sufficiently small (depending on $M,\, N,\, T,\, \eta$) so that $\abs{b_0(x)}>0$ for all $x\in\R^3$. Therefore, our goal is to make
\begin{equation*}
    \norm{\frac{1}{\rho}b(T)-\curl{\psi W}}{C^1} \ll 1,
\end{equation*}
which is the same as bounding the right-hand side of \eqref{eq:main_eq} by a very small constant. Thanks to Sobolev embeddings it will be sufficient to obtain a bound for the $H^r$-norm of this quantity for some $r>5/2$.

\medskip

First of all, we observe that the field $e^{\eta T\Delta}\curl{\phi B_N}$ satisfies the bound
\begin{equation*}
	\begin{split}
		\norm{e^{\eta T\Delta}\curl{\phi B_N}}{H^r}^2 &\simeq \norm{e^{\eta T\Delta}\curl{\phi B_N}}{L^2}^2 + \norm{e^{\eta T\Delta}\nabla^r\curl{\phi B_N}}{L^2}^2 \\
		&\lesssim e^{-2c\eta TN^2}N^{2(r+1)}+N^{-2n}
	\end{split}
\end{equation*}
thanks to the estimates \eqref{eq:estim_norm_evolution_phiBN_Lp_small_times} and \eqref{eq:estim_norm_evolution_alpha-phiBN_Lp_small_times}. We will see later that this $H^r$-bound is sufficiently small for our purposes (choosing the suitable parameters). Thus, we are going to focus on bounding the Duhamel term in \eqref{eq:main_eq}. For the integral \eqref{eq:Duhamel_h2}, we write the initial data in explicit form and observe that everything is proportional to $\rho$:
\begin{equation*}
    \begin{split}
        \left(e^{\eta t\Delta}b_0\cdot\nabla\right)e^{\eta t\Delta}u_0 - \left(e^{\eta t\Delta}u_0\cdot\nabla\right) e^{\eta t\Delta}b_0 & = M\rho \left(e^{\eta (t-T)\Delta}\curl{\psi W}\cdot\nabla\right)e^{\eta t\Delta}\curl{\phi B_N} \\
        &\quad -M\rho\left(e^{\eta t\Delta}\curl{\phi B_N}\cdot\nabla\right)e^{\eta (t-T)\Delta}\curl{\psi W}.
    \end{split}
\end{equation*}
Thus, we just need to bound the $H^r$-norm of \eqref{eq:Duhamel_h2} by something small by itself, independently of $\rho$. To do so, we need the following estimate:
\begin{lem}\label{lem:Hr_to_L2}
	Let $r>0$ and $t>0$. For any $f\in L^2$, one has
	$$\norm{e^{\eta t\Delta}f}{H^r} \lesssim (1+(\eta t)^{-r})^{1/2}\norm{f}{L^2}.$$
\end{lem}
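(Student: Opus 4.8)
The plan is to reduce everything to a pointwise bound on a Fourier multiplier. By Plancherel's theorem and $\widehat{e^{\eta t\Delta}f}(\xi)=e^{-\eta t|\xi|^2}\hat f(\xi)$, the Fourier-side definition of $H^r$ gives
$$\norm{e^{\eta t\Delta}f}{H^r}^2=\int_{\R^3}(1+|\xi|^2)^r e^{-2\eta t|\xi|^2}|\hat f(\xi)|^2\,\d\xi\leq\Big(\sup_{\xi\in\R^3}(1+|\xi|^2)^r e^{-2\eta t|\xi|^2}\Big)\norm{f}{L^2}^2,$$
so it suffices to prove $\sup_{\xi}(1+|\xi|^2)^r e^{-2\eta t|\xi|^2}\lesssim 1+(\eta t)^{-r}$, with an implicit constant depending only on $r$ (which is harmless, since $r$ is fixed throughout).

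To obtain this bound I would split $(1+|\xi|^2)^r\leq 2^{r-1}(1+|\xi|^{2r})$, so that the multiplier is controlled by $e^{-2\eta t|\xi|^2}+|\xi|^{2r}e^{-2\eta t|\xi|^2}$. The first term is at most $1$. For the second, substitute $z:=\eta t|\xi|^2\geq0$ to write $|\xi|^{2r}e^{-2\eta t|\xi|^2}=(\eta t)^{-r}\,z^r e^{-2z}\leq(\eta t)^{-r}\sup_{z\geq0}z^r e^{-2z}=C_r(\eta t)^{-r}$, the last supremum being a finite constant (attained at $z=r/2$). Combining the two pieces yields $\sup_\xi(1+|\xi|^2)^r e^{-2\eta t|\xi|^2}\lesssim_r 1+(\eta t)^{-r}$, and taking square roots in the displayed inequality gives the claim.

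This argument is essentially routine; the only point requiring a little care is to isolate the dependence on the combination $\eta t$ (rather than on $\eta$ and $t$ separately) while letting the $r$-dependence disappear into the implicit constant, which is exactly what the rescaling $z=\eta t|\xi|^2$ accomplishes. No genuine obstacle is expected.
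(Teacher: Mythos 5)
Your proof is correct and follows essentially the same route as the paper: both arguments work on the Fourier side, split the symbol $(1+|\xi|^2)^r e^{-2\eta t|\xi|^2}$ into a bounded piece and a piece $|\xi|^{2r}e^{-2\eta t|\xi|^2}\lesssim_r(\eta t)^{-r}$, the only cosmetic difference being that the paper derives the latter from $e^x\geq x^r/r!$ while you maximize $z^re^{-2z}$ over $z\geq0$ (which has the minor advantage of working for non-integer $r$). No issues.
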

\begin{proof}
	First of all,
	$$\norm{e^{\eta t\Delta}f}{\Dot{H}^r}^2 \simeq \int_{\R^3} \abs{\xi}^{2r}e^{-2\eta t\abs{\xi}^2}\abs{\hat{f}(\xi)}^2\,\d\xi \lesssim (\eta t)^{-r}\int_{\R^3}\abs{\hat{f}(\xi)}^2\,\d\xi \lesssim (\eta t)^{-r}\norm{f}{L^2}^2.$$
	Here we have used that $e^x=\sum_{k\geq0} \frac{x^k}{k!}\geq \frac{x^r}{r!}$ for any integer $r\geq0$ and $x>0$. By the continuity of the heat kernel on $L^2$ and using that
	$$\norm{e^{\eta t\Delta}f}{H^r}^2 = \norm{e^{\eta t\Delta}f}{L^2}^2 + \norm{e^{\eta t\Delta}f}{\Dot{H}^r}^2,$$
	the result follows.
\end{proof}
Using this lemma and the heat kernel estimates,
\begin{subequations}\label{eq:estim_Hr_D_h2}
	\begin{align}
		\norm{D_h^2(T)}{H^r} &\lesssim M\rho\int_0^T \norm{e^{\eta (T-s)\Delta} \div\left[e^{\eta s\Delta}\curl{\phi B_N}\otimes e^{\eta (s-T)\Delta}\curl{\psi W}\right]}{H^r}\d s \notag\\
		&\quad+ M\rho\int_0^T\norm{e^{\eta (T-s)\Delta}\div \left[e^{\eta (s-T)\Delta}\curl{\psi W}\otimes e^{\eta s\Delta}\curl{\phi B_N}\right]}{H^r}\d s \notag\\
		&\lesssim \rho\int_0^{T/2} (1+(\eta (T-s))^{-r-1})^{1/2}\norm{e^{\eta s\Delta}\curl{\phi B_N}}{L^2}\norm{e^{\eta (s-T)\Delta}\curl{\psi W}}{L^\infty}\,\d s \label{eq:int_D_h2_0_T2}\\
		&\quad+ \rho\int_{T/2}^T (\eta (T-s))^{-1/2}\norm{e^{\eta s\Delta}\curl{\phi B_N}}{H^r}\norm{e^{\eta (s-T)\Delta}\curl{\psi W}}{H^r}\,\d s. \label{eq:int_D_h2_T2_T}
	\end{align}
\end{subequations}
For the first integral, we bound the time factor by a factor depending on $T$ and we deal with the norms using Lemma \ref{lem:norm_estimates_low_times}. Going back to \eqref{eq:int_D_h2_0_T2}, this gives us that
\begin{align*}
	\rho\int_0^{T/2} (1+(\eta (T-s))^{-r-1})^{1/2}&\norm{e^{\eta s\Delta}\curl{\phi B_N}}{L^2}\norm{e^{\eta (s-T)\Delta}\curl{\psi W}}{L^\infty}\,\d s \\
	&\lesssim \rho\left(1+(\eta T/2)^{-r-1}\right)^{1/2} N \int_0^{T/2} e^{-c\eta sN^2}\,\d s \\
	&\quad+ \rho\left(1+(\eta T/2)^{-r-1}\right)^{1/2}  N^{-n}\int_0^{T/2}\,\d s \\
	&\lesssim  \rho\,\eta^{-\frac{r+1}{2}}\frac{1-e^{-c\eta TN^2/2}}{N} + \rho\,\eta^{-\frac{r+1}{2}} N^{-n}.
\end{align*}
On the other hand, \eqref{eq:int_D_h2_T2_T} is bounded in the following way:
\begin{equation*}
	\begin{split}
		\rho\int_{T/2}^T (\eta (T-s))^{-1/2}\norm{e^{s\Delta}\curl{\phi B_N}}{H^r}&\norm{e^{s\Delta}\curl{\psi W}}{H^r}\,\d s \\
		&\lesssim \rho\,\eta^{-1/2} N^{r+1}\int_{T/2}^T (T-s)^{-1/2}e^{-c\eta sN^2}\,\d s \\
		&\quad+ \rho\,\eta^{-1/2} N^{-n}\int_{T/2}^T (T-s)^{-1/2}\,\d s.
	\end{split}
\end{equation*}
We compute
$$\int_{T/2}^T (T-s)^{-1/2}e^{-c\eta sN^2}\,\d s \lesssim e^{-c\eta TN^2/2} \int_{T/2}^T (T-s)^{-1/2}\,\d s \lesssim e^{-c\eta TN^2/2}T^{1/2}$$
and so
$$\norm{D_h^2(T)}{H^r} \lesssim \rho\frac{1-e^{-c\eta TN^2/2}}{N} + \rho N^{-n} + \rho e^{-c\eta TN^2/2}N^{r+1} \lesssim \rho (N^{-1} + e^{-c\eta TN^2/2}N^{r+1}),$$
where the constant depends on $n$, $M$ and $T$, and on a negative power of $\eta$. We can choose $N$ and $n$ sufficiently large so that this Duhamel term is small enough for the perturbative analysis. For a choice of the parameters see the last part of the proof.

\medskip

It remains to bound \eqref{eq:Duhamel_h1} and \eqref{eq:Duhamel_h3}, for which we will need the \textit{a priori} estimates on the $H^r$-norms of $v$ and $h$ that we obtained in Theorem \ref{thm:global_existence_large_data}. From now on, the explicit dependence of the constants on $\eta$ will be omitted, as it is the same as in the previous computations. For the first Duhamel term,
\begin{align*}
    \norm{D_h^1(T)}{H^r} &\lesssim \int_0^{T/2} (1+(T-s)^{-r-1})^{1/2}\norm{h(s)\otimes v(s)-v(s)\otimes h(s)}{L^2}\,\d s \\
    &\quad+ \int_{T/2}^T (T-s)^{-1/2}\norm{h(s)\otimes v(s)-v(s)\otimes h(s)}{H^r}\,\d s \\
    &\lesssim \int_0^{T/2} \norm{h(s)}{H^2}\norm{v(s)}{H^2}\,\d s + \int_{T/2}^T (T-s)^{-1/2} \norm{h(s)}{H^r}\norm{v(s)}{H^r}\,\d s \\
    &\lesssim \rho^2N^{4}\int_0^{T/2}\,\d s + \rho^2N^{2r}\int_{T/2}^T (T-s)^{-1/2}\,\d s \\
   &\lesssim \rho^2N^{2(r+2)},
\end{align*}
where in the first inequality we have made use of Lemma \ref{lem:Hr_to_L2} and the heat kernel estimates in $H^r$; in the second inequality we have used the embedding $H^2\hookrightarrow L^\infty$ and the algebra property of $H^r(\R^3)$ for $r>3/2$; and in the last inequality we have used the energy estimates of Theorem \ref{thm:global_existence_large_data}. Finally, for the third Duhamel term,
\begin{align*}
    \norm{D_h^3(T)}{H^r} &\lesssim \int_0^{T/2} (1+(T-s)^{-r-1})^{1/2}\norm{e^{\eta s\Delta}u_0\otimes h(s) + e^{\eta s\Delta}b_0\otimes v(s)}{L^2}\,\d s \\
    &\quad+ \int_{T/2}^T (T-s)^{-1/2} \norm{e^{\eta s\Delta}u_0\otimes h(s) + e^{\eta s\Delta}b_0\otimes v(s)}{H^r}\,\d s \\
    &\lesssim \int_0^{T/2} \left(\norm{e^{\eta s\Delta}u_0}{L^\infty}\norm{h(s)}{L^2} + \norm{e^{\eta s\Delta}b_0}{L^\infty}\norm{v(s)}{L^2} \right)\,\d s \\
    &\quad+ \int_{T/2}^T (T-s)^{-1/2} \left(\norm{e^{\eta s\Delta}u_0}{H^r}\norm{h(s)}{H^r} + \norm{e^{\eta s\Delta}b_0}{H^r}\norm{v(s)}{H^r}\right)\,\d s \\
    &\lesssim \rho \int_0^{T/2} \left(e^{-c\eta sN^2}N + N^{-n}\right)\,\d s \\
    &\quad+ \rho N^{r}\int_{T/2}^T (T-s)^{-1/2} \left(e^{-c\eta sN^2}N^{r+1} + N^{-n}\right)\,\d s \\
    &\lesssim \rho \left(N^{-1}+N^{-n}\right) + \rho N^r(e^{-c\eta TN^2/2}N^{r+1} + N^{-n}).
\end{align*}
Now that we have bounded all Duhamel terms, we return to \eqref{eq:main_eq}:
\begin{equation*}
	\begin{split}
		\norm{\frac{1}{\rho}b(T)-\curl{\psi W}}{H^r} &\leq \norm{\frac{M}{\rho}e^{\eta T\Delta}\curl{\phi B_N}}{H^r} + \sum_{i=1}^3\norm{\frac{D_h^i(T)}{\rho}}{H^r} \\
		&\lesssim \frac{1}{\rho}\left(e^{-c\eta TN^2}N^{r+1}+ N^{-n}\right) \\
		&\quad + \frac{1}{\rho}\left(\rho^2N^{2(r+2)} + \rho(N^{-1}+N^{-n}) + \rho N^{2r}(e^{-c\eta TN^2/2}+N^{-n}) \right).
	\end{split}
\end{equation*}
For the choice of the parameters, taking $\rho=N^{-\beta}$ for some $\beta>0$ we get that
$$\norm{\frac{1}{\rho}b(T)-\curl{\psi W}}{C^1} \leq e^{-c\eta TN^2}N^{r+1+\beta} + N^{\beta-n} + N^{2(r+2)-\beta} + N^{-1}+e^{-c\eta TN^2/2}N^{2r}+N^{2r-n},$$
and everything can be made small if we choose 
\begin{equation}\label{fmdnjskjhgfjskjgb2r+6}
\beta=2(r+2)+1, \qquad  n=2r+6.
\end{equation} and we make $N$ sufficiently large so that the exponential decay dominates the polynomial growth in $N$.

\medskip

To conclude the proof, remember that we want to ensure that $b(T,\cdot)$ has a hyperbolic critical point. For that, choose $r=3>5/2$, so that we can use Sobolev embbedings to bound
$$\norm{\frac{b(T)}{\rho}-\curl{\psi W}}{C^1} \lesssim \norm{\frac{M}{\rho}e^{T\Delta}\curl{\phi B_N}-\frac{D_h(T)}{\rho}}{H^r} \ll 1 \quad\text{for } N\gg1$$
and this is enough to guarantee that $b(T)$ has a hyperbolic critical point near $(0,0,0)$, as 
hyperbolic critical points are stable under $C^1$-perturbations.
\end{proof}

\begin{remark}
	Two remarkable facts about the dependence on the resistivity may be noticed:
	\begin{itemize}
		 \item The constant that appears at the end of the proof is inversely proportional to $\eta$, so it blows up as $\eta\rightarrow0$. This is consistent with the ideal scenario, where the magnetic lines are \textit{frozen-in-flux}.
		 \item In order for the quantity $\delta_0(N)$ to be small, the timescale of reconnection is $T=O(\eta^{-1}N^{-2})$. The reconnection is therefore not instantaneous.
	\end{itemize}
\end{remark}

\section{Acknowledgements}
C. Peña-Vázquez de la Torre is supported by BERC program, by the project PID2021-123034NB-I00 funded by MICIU/AEI/10.13039/501100011033  and FEDER/EU and by the Severo Ochoa accreditation CEX2021-00142-S (BCAM). \\
Renato Lucà is supported by BERC program and by MICINN (Spain) projects Severo Ochoa CEX2021-001142, PID2021-123034NB-I00
funded by MICIU/AEI/10.13039/501100011033  and FEDER/EU.

\bibliographystyle{alpha}

\bibliography{references3}

\end{document}